\theoremstyle{plain}\newtheorem{theorem}{Theorem}[section]
\newtheorem{lemma}[theorem]{Lemma}
\newtheorem{corollary}[theorem]{Corollary}
\theoremstyle{definition}\newtheorem{definition}[theorem]{Definition}
\newtheorem{conjecture}[theorem]{Conjecture}
\newtheorem{example}[theorem]{Example}
\newtheorem{procedure}[theorem]{Procedure}
\theoremstyle{remark}\newtheorem*{remark}{Remark}
\newtheorem*{notabene}{Nota Bene}
\title{A Parking Function Bijection supporting the Haglund-Morse-Zabrocki Conjectures}\author{Angela Hicks\\(University of California-San Diego)}
\begin{document}\maketitle
\begin{abstract} The shuffle conjecture expresses a relationship between parking functions, diagonal harmonics, and the Bergeron-Garsia $\nabla$ operator. Recent conjectures about a family of modified Hall-Littlewood operators made by Haglund, Morse, and Zabrocki sharpen the shuffle conjecture and suggest a variety of combinatorial properties of parking functions. 
In particular, their conjectures combined with previously established commutativity laws of the Hall-Littlewood operators,
suggest the existence of certain bijections relating parking functions with different diagonal compositions. In this paper we formulate a conjecture which
yields an algorithm for the construction of these bijections,
prove a special case, and give some applications.
\end{abstract} 
\section{Introduction}
We begin with a few brief definitions so that we may precisely state our result. Further background for the unfamiliar reader will be included in the following section.
A \textbf{parking function} is a
two line array $$PF=\left[\begin{matrix}r_1 &r_2&\cdots& r_n\\g_1&g_2 &\cdots &g_n\end{matrix}\right]$$ with the following properties:
\begin{itemize}
\item $g_1,g_2,\ldots ,g_n$ are nonnegative integers,
$g_1=0$ and for $i<n$, $g_{i+1}\leq g_i+1$.
\item $\left[\begin{matrix}r_1 &r_2&\cdots& r_n\end{matrix}\right]\in S_n$,
with $r_{i+1}>r_i$ when $g_{i+1}= g_i+1$.
\end{itemize}
In this paper, we refer to the columns of $PF$ (i.e.$\left[\begin{matrix}r_i\\g_i\end{matrix}\right]$.) as the \textbf{dominoes} of $PF$. 
For such a parking function, using $\chi$ for the truth function we define the following statistics:
\begin{align*}\operatorname{area}(PF)=&\sum g_i\\
\operatorname{dinv}(PF)=&\sum_{i<j}\chi (g_i=g_j\text{ and }r_i<r_j)\\&\hspace{1.5cm}+\chi(g_i+1=g_j\text{ and }r_i> r_j).\end{align*}
The \textbf{word} of a parking function ($\operatorname{word}(PF)$) is the permutation $[r_{\sigma_1},\cdots,r_{\sigma_n}]$ with the following properties:
\begin{itemize}
\item $g_{\sigma_i}\geq g_{\sigma_{i+1}}$.
\item If $g_{\sigma_i}=g_{\sigma_{i+1}}$, then $\sigma_i>\sigma_{i+1}$.
\end{itemize}
Then $$\operatorname{ides}(PF)=\{r:r+1 \text{ occurs before }r \text{ in } \operatorname{word}(PF)\}.$$ 
The \textbf{diagonal word} of a parking function ($\operatorname{diagword}(PF)$) is the permutation
$[r_{\sigma_1},\cdots,r_{\sigma_n}]$ 
with the following properties:
\begin{itemize}
\item $g_{\sigma_i}\geq g_{\sigma_{i+1}}$.
\item If $g_{\sigma_i}=g_{\sigma_{i+1}}$, then $r_{\sigma_i}<r_{\sigma_{i+1}}$.
\end{itemize}
Two parking functions $PF$ and $PF'$ have the same diagonal word if and only if the dominoes of $PF$ can be rearranged to give $PF'$. 
Finally, if $[1,z_2, \cdots, z_k]$ gives the list of $z$ such that $g_{z_i}=0$, given in increasing order,
then we define the composition of $PF$ as the vector: 
$$
\operatorname{comp}(PF)=[z_2- 1,z_3-z_2, \cdots, z_k-z_{k-1}, n-z_k+1].
$$
We will use $\mathcal{A}_{c}$ for the set of parking functions with composition $c$
and $\mathcal{A}_{c}(\tau)$ for the set of parking functions with composition $c$
and diagonal word $\tau$.
Let 
$$
c=[c_1,\cdots,c_k]=[c',c_i,c_{i+1},c''],
$$ 
where $c'$ ($c''$) denote the sequences giving the first $i-1$ (respectively last $k-i-1$) parts of $c$.
From the conjectures of Haglund-Morse-Zabrocki in \cite{HMZConj} 
we can derive the following
\begin{conjecture}\label{conj:commute1}
For $c_i\leq c_{i+1}-1$ there exists a bijection $f$
$$
f:\mathcal{A}_{[c',c_i,c_{i+1},c'']}\cup \mathcal{A}_{[c',c_{i+1}-1,c_{i}+1,c'']}
\hskip .1in \longleftrightarrow \hskip .1in 
\mathcal{A}_{[c',c_{i+1},c_i,c'']}\cup \mathcal{A}_{[c',c_{i}+1,c_{i+1}-1,c''] }
$$
with the following properties:
\begin{itemize}
\item $f$ increases the dinv by exactly one.
\item $f$ respects the ides and the area.
\end{itemize}
\end{conjecture}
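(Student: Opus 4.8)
\emph{Reduction to a fixed diagonal word, and to two parts.} The plan is to realize $f$ as an explicit rearrangement rule that relocates a single distinguished domino. First I would reduce to a fixed diagonal word: since $PF$ and $PF'$ share a diagonal word exactly when their dominoes are rearrangements of one another, the multiset of dominoes --- and hence $\operatorname{area}(PF)$ --- depends only on $\tau=\operatorname{diagword}(PF)$, so it suffices to produce, for each permutation $\tau$, a bijection
$$f_\tau:\ \mathcal{A}_{[c',c_i,c_{i+1},c'']}(\tau)\cup\mathcal{A}_{[c',c_{i+1}-1,c_i+1,c'']}(\tau)\ \longleftrightarrow\ \mathcal{A}_{[c',c_{i+1},c_i,c'']}(\tau)\cup\mathcal{A}_{[c',c_i+1,c_{i+1}-1,c'']}(\tau)$$
that raises $\operatorname{dinv}$ by one. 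All four sets lie inside the single set of valid orderings of the fixed domino multiset of $\tau$, so $f_\tau$ is merely a rule for reordering dominoes: $\operatorname{area}$ is then automatic, and the subtler demand --- that the rule can be chosen to fix $\operatorname{ides}$ --- I would fold into the construction and verify last. Next I would localize: the parts $c'$ and $c''$ describe primary blocks of $PF$ lying entirely before, respectively after, the two blocks being modified, and I would show the rule can be taken to fix those dominoes and their diagonals, so that every pairwise $\operatorname{dinv}$-contribution involving an outer domino, and the relative order in $\operatorname{word}(PF)$ of any two cars not both in the two middle blocks, are unchanged. This reduces everything to the two-part case $c=[a,b]$ with $a=c_i\le b-1=c_{i+1}-1$.

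\emph{The two-part bijection.} Here $PF$ lies in one of four sets whose compositions have the same two parts up to coincidences, and passing between them amounts to moving one domino across the boundary of the two primary blocks, adjusting the two block sizes by one. Given such a $PF$, the rule identifies a distinguished domino near that boundary, transfers it into the adjacent block, and reinserts it at the site prescribed by the parking-function recursion --- chosen precisely so that the net change in $\operatorname{dinv}$ is $+1$; when this transfer is illegal, the reverse transfer is performed instead, which is exactly why the two left-hand sets must be pooled. I would then check, in order: (i) the output is a valid parking function with the asserted composition; (ii) the rule is invertible, its inverse being the mirror procedure, so the left union maps bijectively onto the right; (iii) $\operatorname{dinv}$ increases by exactly one, a finite case analysis comparing the relocated domino's diagonal with those of its neighbours, since $\operatorname{dinv}$ only receives contributions from pairs of dominoes on equal or adjacent diagonals; and (iv) $\operatorname{ides}(PF)$ is preserved, because the only change to $\operatorname{word}(PF)$ is the repositioning of the relocated car within its diagonal, which I would argue never reverses the order of two consecutive values $r,\,r+1$.

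I expect step (iv) to be the main obstacle. Because $\operatorname{word}(PF)$ reads each diagonal by position while $\operatorname{diagword}(PF)$ reads it by car label, relocating the distinguished domino can a priori permute several entries of $\operatorname{word}(PF)$, and choosing the reinsertion site so that it simultaneously forces $\Delta\operatorname{dinv}=+1$ \emph{and} fixes $\operatorname{ides}$ is exactly what pins the algorithm down; establishing that these two requirements are compatible --- which at present forces one to restrict to a special case --- is the heart of the argument. The degenerate subcases $c_{i+1}=c_i+1$ (the four sets collapse to two) and $c_{i+1}=c_i+2$ (two of them coincide) have to be treated separately, but are comparatively routine.
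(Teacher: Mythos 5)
First, a point of status: the statement you are proving is Conjecture \ref{conj:commute1} of the paper, and the paper does not prove it in general. It only proves the special case $c_i=1$, by establishing the stronger two-part Conjecture \ref{conj:commute2} for $b=1$ and then embedding that bijection locally. Your two reduction steps are sound and match the paper's remark exactly: area depends only on the diagonal word, and a two-part bijection can be transplanted into the $i^{th}$ and $(i+1)^{st}$ parts provided it never changes the diagonal of any car --- this is precisely why the paper strengthens the conjecture to require preservation of the diagonal word. So the frame of your argument is the paper's frame.

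The genuine gap is the two-part bijection itself, which you describe as identifying ``a distinguished domino near the boundary'' and relocating it with the reinsertion site ``chosen precisely so that the net change in dinv is $+1$.'' Even in the only case anyone can prove ($b=1$), this single-domino picture is false: the paper exhibits a parking function of composition $[n-1,1]$ for which moving the last main-diagonal car to the front drops the dinv by two, and no choice of landing site for that one car fixes this, because every car in the first diagonal larger than it (the ``troublesome set'') contributes a secondary inversion that is destroyed by the move. The actual map $f_3$ is an iterated procedure, built on the Haglund--Loehr recursive construction, that performs a sequence of dinv-neutral $\operatorname{dinvchange}$ operations --- each of which permutes many dominoes, not one --- to empty the troublesome set before the final dinv-decreasing step, and separate arguments are needed for composition, ides, termination, injectivity, and disjointness of the image from $S=f_1(\mathcal{A}^{\tau}_{2,n-2})$. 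Your plan defers exactly this content (your step (iv) and the ``compatibility'' of the dinv and ides requirements), and you concede it forces a restriction to a special case; so as written the proposal reproduces the paper's reduction but leaves unproved the part that constitutes the paper's theorem, and its guiding mechanism contradicts the paper's own counterexample. For the general $c_i\le c_{i+1}-1$ case the statement remains open, so no amount of case analysis along the lines sketched is known to close it.
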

Computer data and theoretical considerations led us to formulate 
the following stronger conjecture:
\begin{conjecture}\label{conj:commute2}
For $b\le a-1$ there exists a bijection $f$ 
$$
f\, :\,
\mathcal{A}_{[ b,a]}\cup \mathcal{A}_{[ a-1,b+1] }
\hskip .1in \longleftrightarrow \hskip .1in 
\mathcal{A}_{[ a,b]}\cup \mathcal{A}_{[ b+1,a-1]}
$$
with the following properties:
\begin{itemize}
\item $f$ increases the dinv by exactly one.
\item $f$ respects the ides and the diagonal word.
\item $f(\mathcal{A}_{[ a-1,b+1] })\subset \mathcal{A}_{[ a,b]} $ and $f^{-1}(\mathcal{A}_{[ b+1,a-1] })\subset \mathcal{A}_{[ b,a]} $.
\end{itemize}
\end{conjecture}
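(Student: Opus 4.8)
The first reduction is that, because the diagonal word of a parking function is determined by — and determines — its multiset of dominoes, any bijection respecting the diagonal word splits as a disjoint union over diagonal words $\tau$, so it suffices to work inside a fixed $\tau$. A parking function with a two-part composition has exactly two dominoes on the main diagonal, so we may assume $\tau$ has exactly two level-$0$ dominoes — otherwise all four sets $\mathcal{A}_{[b,a]}(\tau),\ldots$ are empty — and then these four sets consist of the same dominoes, merely reshuffled. The plan is to deduce Conjecture~\ref{conj:commute2} from a single \emph{one-unit move}: a bijection $\mu\colon \mathcal{A}_{[u,v]}\to\mathcal{A}_{[u+1,v-1]}$, defined for every $v\ge 2$, that raises $\operatorname{dinv}$ by exactly one and preserves $\operatorname{ides}$, $\operatorname{area}$ and the diagonal word. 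Granting $\mu$, the map $f$ of the conjecture is $\mu$ itself: with $u=b,v=a$ it sends $\mathcal{A}_{[b,a]}$ onto $\mathcal{A}_{[b+1,a-1]}$, and with $u=a-1,v=b+1$ it sends $\mathcal{A}_{[a-1,b+1]}$ onto $\mathcal{A}_{[a,b]}$, so it carries $\mathcal{A}_{[b,a]}\cup\mathcal{A}_{[a-1,b+1]}$ bijectively onto $\mathcal{A}_{[a,b]}\cup\mathcal{A}_{[b+1,a-1]}$; the first two bulleted properties are built into $\mu$, and the third holds because $\mu(\mathcal{A}_{[a-1,b+1]})=\mathcal{A}_{[a,b]}$ and $\mu^{-1}(\mathcal{A}_{[b+1,a-1]})=\mathcal{A}_{[b,a]}$.

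It remains to construct $\mu$. I would have it act on the second primitive block $B$, the one whose semilength must drop from $v$ to $v-1$; its head domino $D_0$ sits on the main diagonal and its remaining dominoes sit strictly above it. The move slides $D_0$ one step to the right. If the resulting array is again a legal parking function then $\mu$ stops there — this is exactly the case $v=2$, or the case in which the domino two steps into $B$ lies on level one and carries a label larger than that of $D_0$, where $\mu$ is simply the transposition of $D_0$ with the domino following it. Otherwise a local constraint fails just to the right of $D_0$'s new seat — either the ``rise by at most one'' shape condition or the column-strict inequality — and $\mu$ must repair the array by a cascade: locate the first domino of $B$, reading rightward from $D_0$, that obstructs legality, detach it, reattach it on level one at the end of the first block, and slide the dominoes between $D_0$ and it one step to the right to close the gap. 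One checks that $B$ with that domino removed is again a primitive path, now of semilength $v-1$, that the column-strict conditions at the new block boundary hold, and that precisely one diagonal domino has advanced by one — so the new composition is $[u+1,v-1]$.

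The behaviour of the statistics should fall out as follows. Every domino keeps its own diagonal level throughout the move, so $\operatorname{area}$ and the diagonal word are untouched. For $\operatorname{ides}$ one uses the observation that an adjacent transposition of two dominoes \emph{lying on different diagonals} never reorders the dominoes on any fixed diagonal, hence leaves the word of the parking function unchanged; the only delicate point is the step of the cascade that transposes two level-one dominoes, where one must show their labels always differ by more than one. For $\operatorname{dinv}$, an adjacent transposition alters the inversion status of exactly one pair of dominoes; the single step in which $D_0$ (on the diagonal) passes the level-one domino immediately to its right turns a non-inversion into an inversion — the column-strict inequality forces the smaller-labelled of the two diagonally-adjacent dominoes to be the lower one — while every other relocation in the cascade must be checked to be $\operatorname{dinv}$-neutral, each descending step being matched by an ascending one. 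Invertibility of $\mu$ is supplied by the mirror-image move — slide a diagonal domino one step to the left, with the reversed cascade — and the verification that the two are mutually inverse is purely local.

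The principal obstacle is the cascade in full generality: showing that it terminates in a legal parking function of composition $[u+1,v-1]$, that the level-one dominoes it transposes carry labels differing by more than one, and that the $\operatorname{dinv}$ bookkeeping nets exactly $+1$. This calls for a careful case analysis of the local shape of $B$ near the obstruction, and I expect that is where a complete argument currently stalls; I would aim to carry it through at least when the cascade has length at most one — in particular whenever the second block is short, or whenever no label inside the second block drops below the head label — which already establishes Conjecture~\ref{conj:commute2} in a nontrivial family of cases and is enough for the intended applications.
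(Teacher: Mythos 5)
Your reduction to a single ``one-unit move'' is where the argument breaks, and it breaks irreparably: there is in general \emph{no} bijection $\mu:\mathcal{A}_{[u,v]}\to\mathcal{A}_{[u+1,v-1]}$ preserving the diagonal word (or even just preserving area), because the two sets usually have different cardinalities. Already for $n=4$ one has $|\mathcal{A}_{[2,2]}|=6$ (the only $g$-vector is $(0,1,0,1)$, with both column constraints forced) while $|\mathcal{A}_{[3,1]}|=16$ (twelve parking functions with $g=(0,1,1,0)$ and four with $g=(0,1,2,0)$); fixing a diagonal word does not help, e.g.\ for $\tau=[2,4,1,3]$ the counts are $2$ versus $3$. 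This is precisely why the statement is formulated with a union of two composition classes on each side and with containments, not equalities, in the third bullet: the map must \emph{cross}, i.e.\ part of $\mathcal{A}_{[b,a]}$ has to land in $\mathcal{A}_{[a,b]}$ (namely in the complement of the image of $\mathcal{A}_{[a-1,b+1]}$), and only the remainder lands in $\mathcal{A}_{[b+1,a-1]}$. Your assembled $f$, which by construction would send $\mathcal{A}_{[b,a]}$ onto $\mathcal{A}_{[b+1,a-1]}$ and $\mathcal{A}_{[a-1,b+1]}$ onto $\mathcal{A}_{[a,b]}$, therefore cannot exist, independently of any details of the slide-and-cascade construction.

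For comparison, the paper does not prove the full conjecture at all; it proves only the case $b=1$ (equivalently, Theorem \ref{thm:bigone}), and its construction is shaped exactly by the crossing phenomenon above: $f_1$ embeds $\mathcal{A}^\tau_{2,n-2}$ into $\mathcal{A}^\tau_{1,n-1}$ with image a \emph{proper} subset $S$, $f_2$ maps a \emph{proper} subset $T$ of $\mathcal{A}^\tau_{n-1,1}$ onto $\mathcal{A}^\tau_{n-2,2}$, and the real work is the map $f_3$ between the complements $\mathcal{A}^\tau_{n-1,1}\setminus T$ and $\mathcal{A}^\tau_{1,n-1}\setminus S$, which is not a local slide of the diagonal car but a recursion of ``dinv change'' operations defined through the Haglund--Loehr construction of parking functions with a fixed diagonal word (the naive slide is exhibited in the paper as failing, via the troublesome set). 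So even setting aside your own admission that the cascade analysis is incomplete, the proposal would need to be restructured: replace the class-to-class move $\mu$ by maps defined only on the appropriate subsets, identify those subsets explicitly (the analogues of $S$ and $T$), and handle the complementary piece by a genuinely different mechanism.
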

For a given permutation $\tau\in S_n$ set
$$
\mathcal{C}_c^\tau=\sum_{PF\in \mathcal{A}_{c}(\tau) } t^{\text{area}(PF)} q^{\text{dinv}(PF)}Q_{\text{ides}(PF)},
$$ 
the previous conjecture gives the obvious conjectured corollary:
\begin{conjecture}\label{conj:easyconj} For $b\leq a-1 $
and for all $\tau$ we have
$$
\mathcal{C}_{a, b}^\tau+\mathcal{C}_{b+1, a-1}^\tau=q(
\mathcal{C}^\tau_{b, a}+\mathcal{C}^\tau_{a-1, b+1})$$
\end{conjecture}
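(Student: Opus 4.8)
The plan is to read Conjecture~\ref{conj:easyconj} off Conjecture~\ref{conj:commute2} (so that, for those pairs $(a,b)$ in the stated range for which the bijection is actually established in the body of the paper, one obtains exactly the corresponding instances of the corollary). No new idea is needed beyond translating the bijection into an identity of weighted sums. First I would fix $\tau\in S_n$ and restrict the bijection $f$ of Conjecture~\ref{conj:commute2} to the parking functions whose diagonal word is $\tau$. Since $f$ respects the diagonal word, any element of the codomain with diagonal word $\tau$ has its $f$-preimage in the $\tau$-part of the domain, so this restriction is again a bijection
$$
\mathcal{A}_{[b,a]}(\tau)\cup\mathcal{A}_{[a-1,b+1]}(\tau)\ \longleftrightarrow\ \mathcal{A}_{[a,b]}(\tau)\cup\mathcal{A}_{[b+1,a-1]}(\tau).
$$

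Next I would record how the three statistics behave under $f$ on any $PF$ in the domain: $\operatorname{ides}(PF)$ is unchanged by hypothesis, hence the factor $Q_{\operatorname{ides}(PF)}$ is unchanged; $\operatorname{area}(PF)=\sum g_i$ is unchanged because two parking functions with a common diagonal word have the same multiset of dominoes and so the same multiset of $g$-values; and $\operatorname{dinv}(PF)$ increases by exactly one. Therefore the weight $t^{\operatorname{area}(PF)}q^{\operatorname{dinv}(PF)}Q_{\operatorname{ides}(PF)}$ attached to $f(PF)$ equals $q$ times the weight attached to $PF$. Summing this equality over all $PF$ in the domain and using that $f$ is a bijection onto the codomain, I would conclude
$$
\mathcal{C}^\tau_{a,b}+\mathcal{C}^\tau_{b+1,a-1}\;=\;q\bigl(\mathcal{C}^\tau_{b,a}+\mathcal{C}^\tau_{a-1,b+1}\bigr),
$$
using that for $a\ge b+2$ the compositions $[b,a]$ and $[a-1,b+1]$ are distinct, as are $[a,b]$ and $[b+1,a-1]$, so each union above is a disjoint union of the relevant $\mathcal{A}_c(\tau)$'s and the two summed weights are exactly the four $\mathcal{C}$'s in the statement.

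The only point internal to this deduction that needs separate care is the boundary case $a=b+1$: there $[b,a]=[a-1,b+1]$ and $[a,b]=[b+1,a-1]$, the unions collapse, and the asserted identity is literally $2\,\mathcal{C}^\tau_{b+1,b}=2q\,\mathcal{C}^\tau_{b,b+1}$; I would dispose of it by applying the same weight count to the bijection $f:\mathcal{A}_{[b,b+1]}(\tau)\leftrightarrow\mathcal{A}_{[b+1,b]}(\tau)$ that Conjecture~\ref{conj:commute2} supplies in this case, obtaining $\mathcal{C}^\tau_{b+1,b}=q\,\mathcal{C}^\tau_{b,b+1}$. The genuine obstacle lies in the hypothesis rather than in this argument: producing the bijection $f$ (or the special case of it established here) is the substantive work, and once it is in hand the passage to Conjecture~\ref{conj:easyconj} is the routine computation above. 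I would also remark that summing the very same computation over all diagonal words $\tau$ yields the coarser identity among the full generating functions $\sum_{PF\in\mathcal{A}_c}t^{\operatorname{area}(PF)}q^{\operatorname{dinv}(PF)}Q_{\operatorname{ides}(PF)}$ that is predicted directly by the Hall-Littlewood operator commutation relations underlying \cite{HMZConj}.
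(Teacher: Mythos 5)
Your proposal is correct and follows essentially the same route the paper takes: the paper presents Conjecture \ref{conj:easyconj} as the immediate weighted-sum consequence of the conjectured bijection in Conjecture \ref{conj:commute2} (restricting to a fixed diagonal word, using that $f$ preserves ides and diagonal word — hence area — and raises dinv by one), leaving the details as obvious. Your explicit handling of the degenerate case $a=b+1$ and the disjointness of the compositions for $a\ge b+2$ is a fine, if routine, elaboration of what the paper leaves implicit.
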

In this paper we construct a bijection that proves the special case $b=1$ of Conjecture \ref{conj:commute2} and show how the polynomial identity in \ref{conj:commute1} for $b=1$
can be used to simplify some recent parking function results.
In the following section, we examine parking functions in greater detail and explain why the conjectures of Jim Haglund, Jennifer Morse, and Mike Zabrocki in \cite{HMZConj} imply Conjecture \ref{conj:commute1} then demonstrate that Conjecture \ref{conj:commute2} is stronger than Conjecture \ref{conj:commute1}, as claimed above.
\section{Background}
\subsection{The Parking Functions}
A visual way to represent a parking functions is to start with 
an $n\times n$ grid with a ``main diagonal" running from its southwest corner to its northeast corner. Then a Dyck path is a series of north and east steps beginning in the southwest and ending in the northeast, 
never crossing the main diagonal. 
Figure \ref{fig:dandpf}(a) shows a typical example.
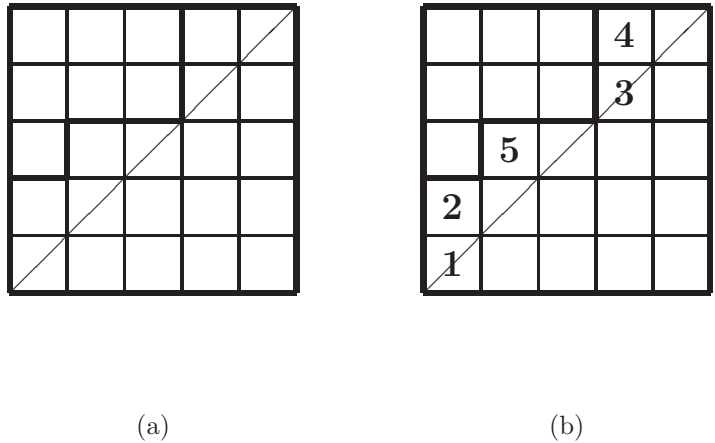
\begin{figure}\begin{center}
\begin{tabular}{p{2in}p{2in}}
\begin{center}
\setlength{\unitlength}{.15in}\begin{picture}(10,10)\linethickness{1pt}\multiput(0, 0)(2, 0){6}{\line(0, 1){10}}\multiput(0, 0)(0, 2){6}{\line(1,0){10}}\linethickness{.1pt}\put(0,0){\line(1, 1){10}}\linethickness{2pt}\multiput(0, 0)(10, 0){2}{\line(0, 1){10}}\multiput(0, 0)(0,10){2}{\line(1,0){10}}\put(0,0){\line(0, 1){2}}\put(0,2){\line(0, 1){2}}\put(2,4){\line(0, 1){2}}\put(6,6){\line(0, 1){2}}\put(6,8){\line(0, 1){2}}\put(0,2){\line(1,0){0}}\put(0,4){\line(1,0){2}}\put(2,6){\line(1,0){4}}\put(6,8){\line(1,0){0}}\end{picture}
\end{center} &\begin{center}
\setlength{\unitlength}{.15in}\begin{picture}(10,10)\linethickness{1pt}\multiput(0, 0)(2, 0){6}{\line(0, 1){10}}\multiput(0, 0)(0, 2){6}{\line(1,0){10}}\linethickness{.1pt}\put(0,0){\line(1, 1){10}}\linethickness{2pt}\multiput(0, 0)(10, 0){2}{\line(0, 1){10}}\multiput(0, 0)(0,10){2}{\line(1,0){10}}\put(0,0){\line(0, 1){2}}\put(0,2){\line(0, 1){2}}\put(2,4){\line(0, 1){2}}\put(6,6){\line(0, 1){2}}\put(6,8){\line(0, 1){2}}\put(0,2){\line(1,0){0}}\put(0,4){\line(1,0){2}}\put(2,6){\line(1,0){4}}\put(6,8){\line(1,0){0}}\Large{\put(1,1){\makebox(0,0){$\mathbf{{1}}$}}}\Large{\put(1,3){\makebox(0,0){$\mathbf{{2}}$}}}\Large{\put(3,5){\makebox(0,0){$\mathbf{{5}}$}}}\Large{\put(7,7){\makebox(0,0){$\mathbf{{3}}$}}}\Large{\put(7,9){\makebox(0,0){$\mathbf{{4}}$}}}\end{picture}
\end{center}\\
\\\begin{center} (a)\end{center} & \begin{center}(b)\end{center}\\
\end{tabular}\end{center}
\caption{A Dyck path $D\prime$ and a parking function $PF\prime$}
\label{fig:dandpf}
\end{figure}
Figure \ref{fig:dandpf}(b) gives a parking function, as a labeled Dyck path. More precisely, a parking function is a Dyck path with its north steps labeled with the integers $1$ to $n$ such that integers in the same column (such as 3 and 4 in the above example) increase from bottom to top. 
From such a diagram, we can then construct an array, (like the ones in the previous section) where $r_i$ is the integer in the $i^{\text{th}}$ row (moving bottom to top) and $g_i$ gives the number of full lattice cells in the $i^{\text{th}}$ row, between the Dyck path and the main diagonal. Thus Figure \ref{fig:dandpf} (b) corresponds to the array $$\left[\begin{matrix}1 &2&5&3&4\\0&1 &1&0 &1\end{matrix}\right].$$ 
We can clearly see now that, as mentioned in the previous section, a two line array $$PF=\left[\begin{matrix}r_1 &r_2&\cdots& r_n\\g_1&g_2 &\cdots &g_n\end{matrix}\right] $$ is a parking function exactly when:
\begin{enumerate}
\item{(Dyck Path Condition.)} For all $i$, $g_i$ is a nonnegative integer with $g_1=0$ and for $i<n$, $g_{i+1}\leq g_i+1$.
\item{(Increasing Column Condition.)} $\left[\begin{matrix}r_1 &r_2&\cdots& r_n\end{matrix}\right]$ is a permutation of $1$ to $n$ such that when $g_{i+1}= g_i+1$, $r_{i+1}>r_i$.
\end{enumerate}
To maintain the traditional parking function
jargon we will hereafter refer to $r_1,r_2,\ldots, r_n$ as the ``cars''
and to $g_1,g_2,\ldots, g_n$ as their respective ``diagonals''.
For instance we could say that ``car $r_i$ is in diagonal $g_i$''. (Notice this means that cars along the main diagonal of a parking function are in diagonal $0$.)
Although it is often more convenient to formally define statistics for parking functions when representing them as a two line array, several statistics can be computed at a glance by examining parking functions as objects in the
$n\times n$ square of lattice cells. In the following we restate our previous definitions
from this viewpoint. 
\begin{definition} The \textbf{area} of a parking function is the number of lattice squares between its Dyck path and the main diagonal. (Note here that we do not add two half squares to get a whole square.)
\end{definition}
\begin{example} For $PF\prime$ in Figure \ref{fig:dandpf} (b), $\operatorname{area}( PF\prime)=3$.
\end{example}
\begin{definition} Two cars in a parking function, are \textbf{primary attacking} if they are in the same diagonal. Two cars $a$ and $b$ are \textbf{secondary attacking} if
$a$ is to the left of $b$ and $b$ is in the diagonal just below that of $a$.
\end{definition}
\begin{example}The primary attacking pairs in $PF\prime$ are $\{1,3\}$, $\{2,5\}$, $\{2,4\}$, and $\{4,5\}$. The secondary attacking pairs are $\{2,3\}$ and $\{3,5\}$.
\end{example}
\begin{definition} Two primary attacking pairs form a \textbf{primary diagonal inversion} (dinv) when the car on the left is smaller. Two secondary attacking pairs form a 
\textbf{secondary dinv} when the car on the left (called $a$ above) is larger. The \textbf{dinv} of a parking function is the total number of pairs of cars forming either primary or secondary dinv.
\end{definition}
\begin{example}
$\{1,3\}$, $\{2,5\}$, and $\{2,4\}$ (but NOT $\{4,5\}$) form primary dinv in $PF\prime$. $\{3,5\}$ forms secondary dinv. Thus $\operatorname{dinv}(PF\prime)=4$.
\end{example}
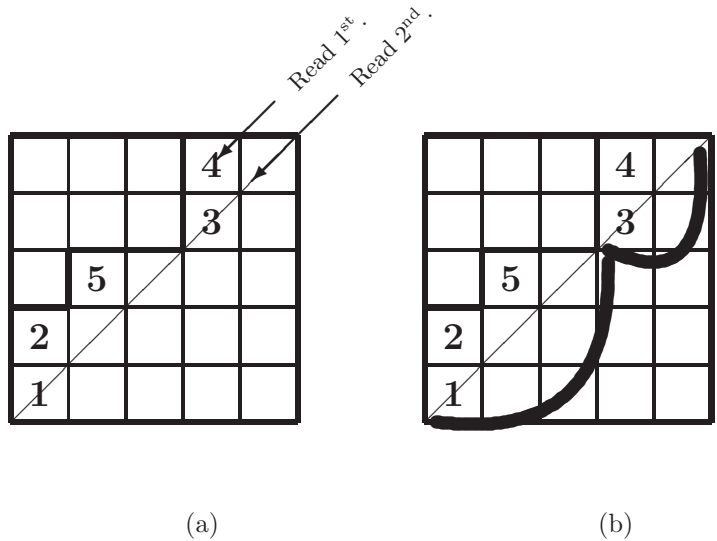
\begin{figure}\begin{center}
\begin{tabular}{p{2in}p{2in}}
\begin{center}
\setlength{\unitlength}{.15in}\begin{picture}(14,14)\linethickness{1pt}\multiput(0, 0)(2, 0){6}{\line(0, 1){10}}\multiput(0, 0)(0, 2){6}{\line(1,0){10}}\linethickness{.1pt}\put(0,0){\line(1, 1){10}}\linethickness{2pt}\multiput(0, 0)(10, 0){2}{\line(0, 1){10}}\multiput(0, 0)(0,10){2}{\line(1,0){10}}\put(0,0){\line(0, 1){2}}\put(0,2){\line(0, 1){2}}\put(2,4){\line(0, 1){2}}\put(6,6){\line(0, 1){2}}\put(6,8){\line(0, 1){2}}\put(0,2){\line(1,0){0}}\put(0,4){\line(1,0){2}}\put(2,6){\line(1,0){4}}\put(6,8){\line(1,0){0}}\Large{\put(1,1){\makebox(0,0){$\mathbf{{1}}$}}}\Large{\put(1,3){\makebox(0,0){$\mathbf{{2}}$}}}\Large{\put(3,5){\makebox(0,0){$\mathbf{{5}}$}}}\Large{\put(7,7){\makebox(0,0){$\mathbf{{3}}$}}}\Large{\put(7,9){\makebox(0,0){$\mathbf{{4}}$}}} \thicklines \put(9.2,11.2){\vector(-1, -1){2}}\small{\put(11,13){\rotatebox{45}{\makebox(0,0){Read $1^{\text{st}}$.}}}}\put(11.4,11.4){\vector(-1, -1){3}}\small{\put(13.2,13,2){\rotatebox{45}{\makebox(0,0){Read $2^{\text{nd}}$.}}}}
\end{picture}
\end{center} &
\begin{center}
\setlength{\unitlength}{.15in}\begin{picture}(14,14)\linethickness{1pt}\multiput(0, 0)(2, 0){6}{\line(0, 1){10}}\multiput(0, 0)(0, 2){6}{\line(1,0){10}}\linethickness{.1pt}\put(0,0){\line(1, 1){10}}\linethickness{2pt}\multiput(0, 0)(10, 0){2}{\line(0, 1){10}}\multiput(0, 0)(0,10){2}{\line(1,0){10}}\put(0,0){\line(0, 1){2}}\put(0,2){\line(0, 1){2}}\put(2,4){\line(0, 1){2}}\put(6,6){\line(0, 1){2}}\put(6,8){\line(0, 1){2}}\put(0,2){\line(1,0){0}}\put(0,4){\line(1,0){2}}\put(2,6){\line(1,0){4}}\put(6,8){\line(1,0){0}}\linethickness{5pt}
\curve(0.4,0, 5,1, 6.4,5.6)\curve(6.4,6, 9,6, 9.6,9.4)
\Large{\put(1,1){\makebox(0,0){$\mathbf{{1}}$}}}\Large{\put(1,3){\makebox(0,0){$\mathbf{{2}}$}}}\Large{\put(3,5){\makebox(0,0){$\mathbf{{5}}$}}}\Large{\put(7,7){\makebox(0,0){$\mathbf{{3}}$}}}\Large{\put(7,9){\makebox(0,0){$\mathbf{{4}}$}}}\end{picture}
\end{center}
\\\begin{center} (a)\end{center} & \begin{center}(b)\end{center}\\
\end{tabular}\end{center}
\caption{The parking function has word $[4,5,2,3,1]$ and composition $[3,2]$.}
\label{fig:wordandcomp}
\end{figure}
\begin{definition} The \textbf{word} of a parking function is formed by reading cars by diagonals, starting with the diagonal farthest from the main diagonal, reading cars in a diagonal from northeast to southwest. The \textbf{ides} of a parking function is the set of $r$ occurring after $r+1$ in the word.
\end{definition}
For notational convenience, using $Q$ for the Gessel quasisymmetric function (indexed by subsets of $\{1,\cdots,n-1\}$), we routinely use $$ \operatorname{wt}(PF)=t^{\text{area}(PF)} q^{\text{dinv}(PF)}Q_{\text{ides}(PF)}.$$
\begin{example}As in Figure \ref{fig:wordandcomp}, reading the integers from the diagonal in $PF\prime$ containing $4$ and then from the diagonal containing $3$, following the arrow in \ref{fig:wordandcomp} (a), we obtain the word $[4,5,2,3,1]$. The ides of $PF\prime$ is \{1,3\}.
\end{example}
\begin{definition} The \textbf{diagonal word} of a parking function ($\operatorname{diagword}(PF)$) is found by reading the diagonals, again starting with the diagonal farthest from the main diagonal, but this time recording the cars within a diagonal in increasing order.
\end{definition}
\begin{example}
The diagonal word of $PF\prime$ is $[2,4,5,1,3]$.
\end{example}
Notice that by splitting $\operatorname{diagword}(PF)$ at its descents, we get the contents of the
diagonals of $PF$. (i.e. Notice that 2, 4, and 5 are on the first diagonal and 1 and 3 are in the main diagonal of $PF\prime$.) 
Thus two parking functions have the same diagonal word exactly when they contain the same cars on every diagonal.
\begin{definition} The \textbf{composition} of a parking function gives the number of lattice cells between successive intersections of the Dyck path and the main diagonal. (See Figure \ref{fig:wordandcomp} (b).) \end{definition}
Frequently we say a car $r$ is in the $i^{\text{th}}$ part of $PF$ if $r$ 
occurs in the interval spanned by the $i^{\text{th}}$ part of $\operatorname{comp}(PF)$. 
\begin{example} $\operatorname{comp}(PF\prime)=[3,2]$, and cars 1,2, and 5 are in the first part, while cars 3 and 4 are in the
second part of $PF\prime$.
\end{example}
\subsection{Parking Functions and Diagonal Harmonics}
The bijection we demonstrate in this paper stems from recent conjectures about certain symmetric function operators and their relations to parking functions and diagonal harmonics.
\begin{definition}
The space $\text{DH}_n$ of \textbf{diagonal harmonics} is 
$$
\text{DH}_n=\{P(x,y) \text{ s. t. } \forall 1\leq h+k \leq n\text{, } \sum_{i=1}^n \partial_{x_i}^h\partial_{y_i}^k P(x,y)=0 \}.
$$
\end{definition}
We may look at forming subspaces of $\text{DH}_n$, say $\mathcal{H}_{r,s}(\text{DH}_n)$ with elements that are bi-homogeneous of degree $r$ in $x$ and $s$ in $y$. 
Then $$\text{DH}_n= {\bigoplus_s}{\bigoplus_r}_{0\leq r+s\leq \tiny{n\choose 2} }\mathcal{H}_{r,s}(\text{DH}_n).$$
\begin{definition} Using ``F char'' for the Frobenius characteristic,
Let $$\text{DH}_n[x;q,t]={\sum \sum}_{0\leq r+s\leq \tiny{ { n \choose 2 }}} t^r q^s \text{ F char }\mathcal{H}_{r,s}(\text{DH}_n).$$
\end{definition}
Almost a decade ago, Haglund et al. formulated the "shuffle" conjecture in \cite{HHLRU}, which states a relationship between the parking functions and the diagonal harmonics:
\begin{conjecture}
\begin{equation*}\text{DH}_n[x;q,t]=\sum_{PF\in \text{PF}_n} t^{\text{area}(PF)} q^{\text{dinv}(PF)}Q_{\text{ides}(PF)}.\end{equation*}
\end{conjecture}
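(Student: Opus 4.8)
The plan is to prove the identity by refining both sides according to the composition statistic, exactly exploiting the structure set up in this paper. On the combinatorial side, group the sum by $\operatorname{comp}(PF)$, writing
\[
\sum_{PF\in \text{PF}_n} t^{\operatorname{area}(PF)} q^{\operatorname{dinv}(PF)}Q_{\operatorname{ides}(PF)} \;=\; \sum_{c\models n}\;\sum_{PF\in\mathcal{A}_c}\operatorname{wt}(PF).
\]
On the symmetric function side, recall from \cite{HMZConj} the one-parameter family of modified Hall--Littlewood creation operators (call them $\mathbf{C}_a$) together with the conjectured decomposition $\nabla e_n=\sum_{c\models n}\mathbf{C}_{c_1}\mathbf{C}_{c_2}\cdots\mathbf{C}_{c_k}\,1$. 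Since $\text{DH}_n[x;q,t]=\nabla e_n$ by Haiman's theorem, it suffices to prove, for every composition $c=[c_1,\dots,c_k]$ of $n$,
\[
\mathbf{C}_{c_1}\mathbf{C}_{c_2}\cdots\mathbf{C}_{c_k}\,1 \;=\; \sum_{PF\in\mathcal{A}_c} t^{\operatorname{area}(PF)} q^{\operatorname{dinv}(PF)}Q_{\operatorname{ides}(PF)}.
\]
This is the compositional form of the shuffle conjecture, and it manifestly implies the stated identity.

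I would then prove the compositional identity by induction on the number of parts $k$, peeling off the first part $c_1$. The key subgoal is a combinatorial reading of the action of $\mathbf{C}_a$ on the parking-function generating function with a fixed composition: prepending a part of size $a$ should correspond to inserting a new run of $a$ cars in diagonal $0$, relabelling the old cars, and redistributing their positions, with the plethystic substitution and $q$-weighting built into $\mathbf{C}_a$ tracking precisely the resulting change in $\operatorname{area}$, $\operatorname{dinv}$, and $\operatorname{ides}$. Concretely, one expands $\mathbf{C}_a$ via $q$-analogues and a plethystic shift, applies Pieri/Cauchy-type rules to convert the symmetric-function bookkeeping into a sum over ways of interleaving the new cars, and checks the three statistics match summand by summand; the base case $k=1$, i.e. $c=[n]$, should be a direct hands-on verification.

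The hard part — and the reason this is a famous open problem rather than a routine exercise — is establishing the self-consistency of the combinatorial side under the $\mathbf{C}_a$-action (equivalently, the operator decomposition of $\nabla e_n$). The obstruction is $\operatorname{dinv}$: because it couples primary (same-diagonal) and secondary (adjacent-diagonal) inversions, there is no obvious clean recursion for how $\operatorname{dinv}$ transforms when an entire run of cars is prepended and all the old cars shift diagonal, and one must simultaneously preserve $\operatorname{ides}$. This is exactly the kind of fine-grained control that the commutativity bijections of Conjectures~\ref{conj:commute1}--\ref{conj:easyconj} are designed to provide in special cases, so a realistic route is to first accumulate enough such local-move bijections — as this paper does for $b=1$ — to assemble the full recursion, or else to attack the operator identity directly through the plethystic algebra of the $\mathbf{C}_a$ and their commutation relations with $\nabla$. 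I expect essentially all the difficulty to concentrate in the induction step controlling $\operatorname{dinv}$ and $\operatorname{ides}$ under prepending a part.
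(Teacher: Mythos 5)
The statement you are addressing is the shuffle conjecture, which the paper states only as a conjecture (background motivation); it offers no proof, and none is claimed anywhere in the paper. Your proposal likewise does not prove it: after invoking $\text{DH}_n[x;q,t]=\nabla e_n$ and the expansion $e_n=\sum_{c\models n}C_{c_1}\cdots C_{c_k}1$ (note the decomposition is of $e_n$, not of $\nabla e_n$ as you wrote, and that expansion is a known property of the $C$ operators, not a conjecture), you reduce the desired identity to $\nabla C_c 1=\sum_{PF\in\mathcal{A}_c}\operatorname{wt}(PF)$ for every composition $c$. That is exactly Conjecture \ref{conj:Cop} of Haglund--Morse--Zabrocki, a strictly stronger open conjecture, so the reduction runs in the wrong direction: you have replaced the target by something harder, not proved it.

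The genuine gap is the induction step you sketch and then concede you cannot carry out: there is no known combinatorial interpretation of the action of $C_a$ on the fixed-composition generating functions that tracks area, dinv and ides ``summand by summand,'' and your proposal offers no mechanism for producing one --- it only names the obstruction (the coupling of primary and secondary dinv under prepending a part). Nor can the commutativity bijections of Conjectures \ref{conj:commute1} and \ref{conj:commute2} be ``accumulated'' into such a proof: they only show that the polynomials $\mathcal{C}_c^\tau$ satisfy the same exchange relations as the operators $C_c1$, which is a consistency check; without independently establishing $\nabla C_c 1=\mathcal{C}_c$ for an anchoring family of compositions, no amount of local moves yields the identity. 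In short, what you have written is a correct account of why the problem is hard and how it fits the framework of this paper, but as a proof it is circular deferral of the essential difficulty, and it should not be presented as an argument for the stated equation.
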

The \textbf{nabla operator} introduced by Bergeron and Garsia in \cite{nabla}, is given by the following:
\begin{definition}
Using $\tilde{H}_\mu[X;q,t]$ for the Macdonald polynomial basis element indexed by $\mu=[\mu_1,\cdots,\mu_{n}]$ and $\mu'=[\mu_1',\cdots,\mu_{n'}']$ for the conjugate of $\mu$ 
$$\nabla \tilde{H}_\mu[X;q,t]=t^{\sum (i-1)\mu_i} q^{\sum (i-1)\mu_i'} \tilde{H}_\mu[X;q,t].$$
\end{definition}
Combining results in a number of papers (\cite{Remarkable}, \cite{Haiman}, and \cite{nabla}) it was shown that:
\begin{theorem} With $e_n$ denoting the elementary symmetric function we have $$\text{DH}_n[x;q,t]=\nabla e_n.$$
\end{theorem}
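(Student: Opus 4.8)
The plan is to prove the identity by meeting in the middle: expand both $\nabla e_n$ and $\text{DH}_n[x;q,t]$ in the modified Macdonald basis and check that the coefficients agree.

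\textbf{The symmetric-function side.} Since $\{\tilde H_\mu[X;q,t]\}_{\mu\vdash n}$ is a basis of the degree-$n$ symmetric functions over $\mathbf{Q}(q,t)$, the first step is to expand $e_n$ in this basis. Writing $M=(1-q)(1-t)$, $B_\mu(q,t)=\sum_{c\in\mu}q^{a'(c)}t^{l'(c)}$, $\Pi_\mu(q,t)=\prod_{c\in\mu,\ c\ne(0,0)}(1-q^{a'(c)}t^{l'(c)})$, and $w_\mu(q,t)=\prod_{c\in\mu}(q^{a(c)}-t^{l(c)+1})(t^{l(c)}-q^{a(c)+1})$, one establishes
$$e_n \;=\; \sum_{\mu\vdash n}\frac{M\,B_\mu(q,t)\,\Pi_\mu(q,t)}{w_\mu(q,t)}\;\tilde H_\mu[X;q,t]$$
as a formal consequence of the Cauchy-type identity for the $\tilde H_\mu$ together with the standard specialization and Macdonald-norm formulas for this basis (the case $n=1$ is the check $e_1=\tilde H_{(1)}$). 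Applying $\nabla$ and using $\nabla\tilde H_\mu=t^{n(\mu)}q^{n(\mu')}\tilde H_\mu=:T_\mu\,\tilde H_\mu$ then yields the closed form
$$\nabla e_n \;=\; \sum_{\mu\vdash n}\frac{T_\mu\,M\,B_\mu(q,t)\,\Pi_\mu(q,t)}{w_\mu(q,t)}\;\tilde H_\mu[X;q,t].$$
It therefore suffices to prove that $\text{DH}_n[x;q,t]$ equals this same sum.

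\textbf{The geometric side.} For this I would follow Haiman's Hilbert-scheme approach. Let $\mathrm{Hilb}^n$ be the Hilbert scheme of $n$ points in $\mathbf{C}^2$, let $X_n$ be the reduced isospectral Hilbert scheme lying over $\mathrm{Hilb}^n\times_{\mathrm{Sym}^n\mathbf{C}^2}(\mathbf{C}^2)^n$, and let $P$ be the Procesi bundle, the push-forward of $\mathcal O_{X_n}$ to $\mathrm{Hilb}^n$. The crucial input — the ``$n!$-theorem'' — is that $X_n$ is Cohen--Macaulay (in fact Gorenstein); this forces $P$ to be a rank-$n!$ vector bundle whose fibre $P_\mu$ over each torus-fixed point $I_\mu$ $(\mu\vdash n)$ carries the regular representation of $S_n$ with bigraded Frobenius characteristic exactly $\tilde H_\mu[X;q,t]$. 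One then identifies the diagonal harmonics, as a doubly graded $S_n$-module, with the global sections of the twist of $P$ by the ample line bundle $\mathcal O(1)=\bigwedge^n\!\mathcal B$ $(\mathcal B$ the tautological rank-$n$ bundle): $\text{DH}_n\cong H^0(\mathrm{Hilb}^n,\,P\otimes\mathcal O(1))$. A vanishing theorem, $H^i(\mathrm{Hilb}^n,\,P\otimes\mathcal O(1))=0$ for $i>0$, lets one replace the Frobenius series of $\text{DH}_n$ by an equivariant Euler characteristic, which is computed by Atiyah--Bott localization over the finitely many fixed points $I_\mu$. The contribution at $\mu$ is the character of $P_\mu$, namely $\tilde H_\mu$, divided by the Koszul factor from the tangent weights of $\mathrm{Hilb}^n$ at $I_\mu$ and weighted by the twist from $\mathcal O(1)$; these assemble precisely into $T_\mu M B_\mu\Pi_\mu/w_\mu$, so summing over $\mu$ reproduces the expansion of $\nabla e_n$ above.

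\textbf{The main obstacle.} The serious content is the $n!$-theorem, i.e. that $X_n$ is Cohen--Macaulay and Gorenstein. This is Haiman's deepest ingredient: it is obtained by realizing $X_n$ inside a larger ``polygraph'' variety whose coordinate ring is shown, via a delicate double induction (on $n$ and on the number of polygraph arrows), to be a free module over a polynomial subring, and then transferring freeness and the Gorenstein property to $X_n$. Everything downstream — the cohomology vanishing, the localization bookkeeping, and the compatibility with $\nabla$ — is comparatively routine once the $n!$-theorem (and its companion the $(n{+}1)^{n-1}$-theorem, giving $\dim\text{DH}_n=(n{+}1)^{n-1}$) is in hand.
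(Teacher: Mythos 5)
This theorem is not proved in the paper at all: it is quoted as background, with the proof attributed to the combination of \cite{Remarkable}, \cite{Haiman}, and \cite{nabla}. Your outline is an accurate reconstruction of exactly that proof: the expansion of $e_n$ in the $\tilde H_\mu$ basis with coefficients $MB_\mu\Pi_\mu/w_\mu$, the diagonal action of $\nabla$ giving the extra factor $T_\mu$, Haiman's identification of $\text{DH}_n$ with $H^0(\mathrm{Hilb}^n, P\otimes\mathcal O(1))$ via the Procesi bundle, and the localization computation of the equivariant Euler characteristic reproducing the same sum over $\mu$. So in substance you are following the same route the paper relies on, just spelled out rather than cited.

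Two caveats on calibration rather than correctness. First, as a self-contained argument this is a roadmap, not a proof: the $n!$-theorem is invoked, and you correctly flag it as the deepest ingredient, but the cohomology vanishing $H^i(\mathrm{Hilb}^n,P\otimes\mathcal O(1))=0$ for $i>0$ is \emph{not} ``comparatively routine'' — in Haiman's development it is itself a major theorem, again resting on the polygraph freeness results, and it is the key step that converts the Euler-characteristic formula into the actual bigraded Frobenius series of $\text{DH}_n$. Second, the $(n+1)^{n-1}$ dimension count is a corollary of this machinery rather than an input to it. Neither point affects the validity of the outline, and since the paper itself treats the theorem as a black box from the literature, your level of detail is appropriate to its role here.
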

To state several more recent results in this area, we need to introduce a certain family of modified Hall Littlewood Operators. Using the brackets $[\,\,]$ to indicate plethystic substitution, set 
for any symmetric function $F[X]$
$$
C_a F[X]=\left(\frac{-1}{q}\right)^{a-1}\sum_{k\geq 0} F\left[X+\frac{1-q}{q}z\right] \bigg| _{z^k}h_{a+k}[X].
$$
The $C$ operators have several useful properties. (See \cite{Cop}.) Among them:
\begin{enumerate}
\item
Using $c\models n$ to indicate that $c$ is a composition of $n$, we have:
$$e_n=\sum_{[c_1,\cdots,c_s]\models n}C_{c_1}C_{c_2}\dots C_{c_s}1$$
\item
The $C$ operators obey the following commutativity law: 
For $a+1\leq b$, $$q(C_{a}C_b +C_{b-1}C_{a+1})=C_b C_a +C_{a+1}C_{b-1} .$$ 
\item
Using $\mu\vdash n$ to indicate that $\mu$ is a partition of $n$,
the collection $\big\{ C_{\mu_1}\cdots C_{\mu_n}1\big\}_{\mu\vdash n}$ is a basis for the homogeneous symmetric functions of degree $n$.
\end{enumerate}
Note that to simplify notation, for a composition $c=[c_1,\cdots,c_k]$, we use the convention $$C_c F[X] =C_{c_1}\dots C_{c_k} F[X].$$
Recalling that we set 
$$
c=[c_1,\cdots,c_k]=[c',c_i,c_{i+1},c''],
$$ it is then easily seen that the above commutativity relations imply that for $c_i< c_{i+1}-1$, we must
have
$$
q(C_{[c',c_i,c_{i+1},c'']}1+C_{[c',c_{i+1}-1,c_i+1,c"]}1)
=C_{[c',c_{i+1},c_i,c'']}1+C_{[c',c_i+1,c_{i+1}-1,c"]}1
$$
Now in \cite{HMZConj} Haglund, Morse and Zabrocki made the 
following conjecture:
\begin{conjecture}\label{conj:Cop} For $c$ a composition,
$$\nabla C_c 1=\sum_{PF\in \mathcal{A}_c} t^{\text{area}(PF)} q^{\text{dinv}(PF)}Q_{\text{ides}(PF)}.$$
\end{conjecture}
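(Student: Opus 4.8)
The plan is to attack Conjecture~\ref{conj:Cop} by induction, matching a recursion for the symmetric function $\nabla C_c 1$ with one for the parking-function sum $F(c):=\sum_{PF\in\mathcal A_c}\operatorname{wt}(PF)$, the recursion being driven by the first part $c_1$ of the composition. For the base case $c=[n]$ one computes straight from the definition that $C_n 1=(-1/q)^{n-1}h_n$; expanding $h_n$ in the modified Macdonald basis $\tilde{H}_\mu$ and applying $\nabla$ through its eigenvalue definition produces a closed form, and on the combinatorial side $\mathcal A_{[n]}$ is exactly the set of parking functions whose Dyck path touches the main diagonal only at its endpoints, so the equality can be checked termwise using the Gessel expansion of hook Schur functions (the cases $n=1,2$ are already instructive and do work out). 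The other small and ``near-extreme'' compositions --- $[1^n]$, hooks, two-part compositions --- should be amenable to the same explicit computation.

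On the combinatorial side the inductive step is a first-return decomposition: a parking function of composition $[c_1,c'']$ splits at the point where its Dyck path first returns to the main diagonal into a ``prime'' piece $PF_1$ sitting in the lower-left $c_1\times c_1$ sub-square and a piece $PF_2$ of composition $c''$ in the upper-right block, with the labels $1,\dots,n$ distributed between them in all order-preserving ways. Here $\operatorname{area}$ is additive, $\operatorname{dinv}$ splits as $\operatorname{dinv}(PF_1)+\operatorname{dinv}(PF_2)$ plus a cross term coming only from pairs of cars on equal or adjacent diagonals of the two pieces (which one can evaluate in closed form, $PF_1$ always lying strictly to the left of $PF_2$), and the distribution of labels together with $\operatorname{ides}$ is controlled by the multiplication rule for the Gessel quasisymmetric functions $Q_S$. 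This yields a recursion expressing $F([c_1,c''])$ in terms of the generating function of prime parking functions of size $c_1$ and the values $F(c^{\star})$ for shorter compositions $c^{\star}$.

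The hard part --- and the reason this is still only a conjecture --- is to produce the matching recursion for $\nabla C_c 1$. The obstruction is that $\nabla$ does not commute with the operators $C_a$, so one cannot strip $C_{c_1}$ out from inside $\nabla$; the conjugated operator $\nabla C_{c_1}\nabla^{-1}$ is not visibly the ``attach a prime block of size $c_1$'' operation that the combinatorics demands. The route I would attempt is algebraic: expand $C_c 1$ in the $\tilde{H}_\mu$ basis via the Pieri-type formulas for the $C_a$ recorded in \cite{Cop}, apply $\nabla$ termwise, and reorganize the result using a Macdonald-polynomial recursion of Garsia--Haiman--Tesler type into a recursion on compositions --- this is exactly where a genuinely new symmetric-function identity would be required. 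One should also keep in mind that summing the conjectured identity over \emph{all} compositions of $n$ collapses, by property~1 of the $C$ operators, to the shuffle conjecture $\nabla e_n=\sum_{PF}\operatorname{wt}(PF)$ of \cite{HHLRU}, while the commutativity law $q(C_aC_b+C_{b-1}C_{a+1})=C_bC_a+C_{a+1}C_{b-1}$ (property~2) is precisely the algebraic shadow of the bijections in Conjectures~\ref{conj:commute1} and \ref{conj:commute2}. Thus a complete proof of Conjecture~\ref{conj:Cop} is at least as hard as the shuffle conjecture, and I expect it to need algebraic machinery beyond bijective combinatorics.

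For this reason the achievable target here is not the full conjecture but its combinatorial skeleton. Constructing the $b=1$ bijection of Conjecture~\ref{conj:commute2} proves Conjecture~\ref{conj:easyconj} for $b=1$, which upgrades the commutativity law to the level of parking-function statistics; iterating it moves a two-part composition $[a,b]$ one step at a time toward partition shape, so that the relevant specializations of $\nabla C_c 1$ are pinned down relative to one another even though their common value is not computed independently. I would therefore present the base cases, the first-return recursion on $F(c)$, and the $b=1$ bijection together as mutually supporting evidence, and isolate the missing algebraic recursion for $\nabla C_c 1$ as the one remaining gap.
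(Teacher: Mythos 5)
There is no proof in the paper for you to be measured against: Conjecture~\ref{conj:Cop} is stated verbatim as a conjecture of Haglund, Morse and Zabrocki from \cite{HMZConj}, and it was open at the time of writing. The paper only combines it with the commutativity law $q(C_aC_b+C_{b-1}C_{a+1})=C_bC_a+C_{a+1}C_{b-1}$ to motivate Conjectures~\ref{conj:commute1} and \ref{conj:commute2}; its actual theorem is the $b=1$ bijection ($f_1,f_2,f_3$) proving Conjecture~\ref{conj:commute2} in that special case. Your assessment therefore matches the paper's own stance: you correctly refuse to claim a proof, you correctly note that summing over all compositions collapses the statement to the shuffle conjecture of \cite{HHLRU} via $e_n=\sum_{c\models n}C_c1$, you correctly identify the commutativity law as the algebraic counterpart of the conjectured parking-function bijections, and you correctly isolate the $b=1$ bijection as the piece that is actually within reach --- which is precisely what the paper delivers.

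One caution about the combinatorial half of your sketch: the cross term in your first-return decomposition is not a closed form. A car in the first block and a car in a later block contribute primary or secondary dinv according to the relative sizes of their labels and their diagonals, so the cross-dinv depends on how the labels interleave, not just on the two shapes; this is exactly why no naive recursion on $\sum_{PF\in\mathcal A_c}\operatorname{wt}(PF)$ by stripping the first bounce block is known, and why the paper (and the Remark following Conjecture~\ref{conj:Cop}) instead works with operations that fix every car's diagonal, so that dinv and ides interactions with untouched parts are provably preserved. If you pursue the recursion route, the known usable recursions peel off the cars in diagonal zero (as in the results of \cite{EHshuffle} and \cite{GXZ} cited in the final section) rather than the first block of the path. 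Your identification of the missing algebraic recursion for $\nabla C_c1$ as the essential gap is fair, and that gap is not closed in this paper either.
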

This conjecture in particular, when combined with the above commutativity law, implies that the parking function collections ${\mathcal A}_c$
must satisfy Conjecture \ref{conj:commute1} in the introduction.
From the definition of dinv, ides and area, it is not difficult to derive that a bijection $f$ as in Conjecture \ref{conj:commute1} with the desired properties can always be 
constructed, if, already in the two part composition case, it were possible
to construct a bijection which had the additional property
of respecting also the diagonal word. (See the following remark for further explanation.) This circumstance, together with 
strongly supporting computer data led us to our 
Conjecture \ref{conj:commute2}, namely that for $b< a-1$ there exists a bijection $f$ 
$$
f :
\mathcal{A}_{[ b,a]}\cup \mathcal{A}_{[ a-1,b+1] }
\hskip .1in \longleftrightarrow \hskip .1in 
\mathcal{A}_{[ a,b]}\cup \mathcal{A}_{[ b+1,a-1]}
$$
with the following properties:
\begin{itemize}
\item $f$ increases the dinv by exactly one.
\item $f$ respects the ides and the diagonal word.
\item $f(\mathcal{A}_{[ a-1,b+1] })\subset \mathcal{A}_{[ a,b]} $ and $f^{-1}(\mathcal{A}_{[ b+1,a-1] })\subset \mathcal{A}_{[ b,a]} $.
\end{itemize}
\begin{remark} Notice that any algorithm
which, in the two part composition case, constructs a bijections that satisfies
the dinv and ides requirements, can be imbedded in the general 
case by suitably applying it locally in the portion of the parking functions
in the $i^{th}$ and $i +1^{st}$ parts. If the resulting bijection does not 
change the diagonal of any given car, then it will not only also preserve area
but will also globally have the additional required dinv and ides properties. 
The reason for this is due to the simple fact that 
switching cars without changing their diagonal will not affect diagonal inversions nor ides formed with any car not in the parts in which the switching occurs. (See Figure \ref{fig:samediag}.)
\end{remark}
\vskip .2in
Thus we see that {Conjecture \ref{conj:commute2}} implies
{Conjecture \ref{conj:commute1}} and in particular,
our main result here which proves {Conjecture \ref{conj:commute2}}
for $b=1$ also proves {Conjecture \ref{conj:commute1}}
in the case that $c_i=1$.
\begin{figure}\label{fig:samediag}\begin{center}
\begin{tabular}{p{2in}p{2in}}
\begin{center}\setlength{\unitlength}{.1in}\begin{picture}(14,14)\linethickness{1pt}\multiput(0, 0)(2, 0){8}{\line(0, 1){14}}\multiput(0, 0)(0, 2){8}{\line(1,0){14}}\linethickness{.1pt}\put(0,0){\line(1, 1){14}}\linethickness{2pt}\multiput(0, 0)(14, 0){2}{\line(0, 1){14}}\multiput(0, 0)(0,14){2}{\line(1,0){14}}\put(0,0){\line(0, 1){2}}\put(0,2){\line(0, 1){2}}\put(4,4){\line(0, 1){2}}\put(4,6){\line(0, 1){2}}\put(4,8){\line(0, 1){2}}\put(10,10){\line(0, 1){2}}\put(10,12){\line(0, 1){2}}\put(0,2){\line(1,0){0}}\put(0,4){\line(1,0){4}}\put(4,6){\line(1,0){0}}\put(4,8){\line(1,0){0}}\put(4,10){\line(1,0){6}}\put(10,12){\line(1,0){0}}\Large{\put(1,1){\makebox(0,0){$\mathbf{{2}}$}}}\Large{\put(1,3){\makebox(0,0){$\mathbf{{6}}$}}}\Large{\put(5,5){\makebox(0,0){$\mathbf{{3}}$}}}\Large{\put(5,7){\makebox(0,0){$\mathbf{{5}}$}}}\Large{\put(5,9){\makebox(0,0){$\mathbf{{7}}$}}}\Large{\put(11,11){\makebox(0,0){$\mathbf{{1}}$}}}\Large{\put(11,13){\makebox(0,0){$\mathbf{{4}}$}}} \thicklines \put(7,7){\vector(1,1){2.5}} \thicklines \put(8,8){\vector(-1,-1){2}}\put(5,5){\circle{2.2}}\put(11,11){\circle{2.2}}\linethickness{5pt}
\curve(2.1,1, 4,1.3, 5.4,3.3)\end{picture}
\end{center} &
\begin{center}
\setlength{\unitlength}{.1in}\begin{picture}(14,14)\linethickness{1pt}\multiput(0, 0)(2, 0){8}{\line(0, 1){14}}\multiput(0, 0)(0, 2){8}{\line(1,0){14}}\linethickness{.1pt}\put(0,0){\line(1, 1){14}}\linethickness{2pt}\multiput(0, 0)(14, 0){2}{\line(0, 1){14}}\multiput(0, 0)(0,14){2}{\line(1,0){14}}\put(0,0){\line(0, 1){2}}\put(0,2){\line(0, 1){2}}\put(4,4){\line(0, 1){2}}\put(4,6){\line(0, 1){2}}\put(4,8){\line(0, 1){2}}\put(10,10){\line(0, 1){2}}\put(10,12){\line(0, 1){2}}\put(0,2){\line(1,0){0}}\put(0,4){\line(1,0){4}}\put(4,6){\line(1,0){0}}\put(4,8){\line(1,0){0}}\put(4,10){\line(1,0){6}}\put(10,12){\line(1,0){0}}\Large{\put(1,1){\makebox(0,0){$\mathbf{{2}}$}}}\Large{\put(1,3){\makebox(0,0){$\mathbf{{6}}$}}}\Large{\put(5,5){\makebox(0,0){$\mathbf{{1}}$}}}\Large{\put(5,7){\makebox(0,0){$\mathbf{{5}}$}}}\Large{\put(5,9){\makebox(0,0){$\mathbf{{7}}$}}}\Large{\put(11,11){\makebox(0,0){$\mathbf{{3}}$}}}\Large{\put(11,13){\makebox(0,0){$\mathbf{{4}}$}}}\linethickness{5pt}
\curve(2.1,1, 9,4.3, 11.4,9.6)\end{picture}
\end{center}
\\\begin{center} (a)\end{center} & \begin{center}(b)\end{center}\\
\end{tabular}\end{center}
\caption{Notice that we create a dinv between $1$ and $3$ in (b) that did not exist in (a) when we interchange two elements in the second and third part, because $1$ and $3$ remain in their original diagonal, diagonal inversions including elements in the first part of (a) (i.e. the one part not changed by our switch) still remain in (b). Thus, for example, in both (a) and (b) there is a diagonal inversion between $2$ and $3$.}
\label{fig:diagworddinv}
\end{figure}
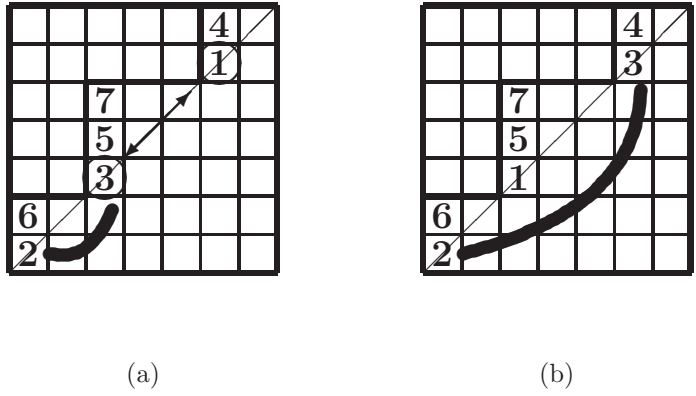
In the following sections, we will define $f$ on two part partitions, as in Conjecture \ref{conj:commute2} with $b=1$ by defining the following maps $f_1$, $f_2$, and $f_3$ and combining them to give the required bijection, $f$:
$$\begin{array}{|r||c|c|c|c|c|c|}\hline f &&\text{domain}&&\text{range}&\text{preimage} &\text{image}\\\hline\hline
&f_1&\mathcal{A}^\tau_{2, n-2}&\hookrightarrow &\mathcal{A}^\tau_{1, n-1}&\mathcal{A}^\tau_{2, n-2}&S\\ \cline{2-7}
&f_2&\mathcal{A}^\tau_{n-1, 1} &\mapsto&\mathcal{A}^\tau_{n-2, 2}&T&\mathcal{A}^\tau_{n-2, 2}
\\ \cline{2-7}
&f_3&\mathcal{A}^\tau_{n-1, 1}&\rightarrow&\mathcal{A}^\tau_{1, n-1}&\mathcal{A}^\tau_{n-1, 1}\backslash T&\mathcal{A}^\tau_{1, n-1}\backslash S
\\
\hline
\end{array}$$
In particular,  hereafter our bijection $f$ will be acting 
on
$$
PF=\left[\begin{matrix}r_1 &r_2&\cdots& r_n\\g_1&g_2 &\cdots &g_n\end{matrix}\right]= \left[\begin{matrix}D_1,\cdots, D_n\end{matrix}\right],
$$
where the $D_i$ are the previously mentioned dominoes of $PF$. We will use $\tau$ hereafter for the diagonal word of $PF$. Since $f(PF)$, as it is defined below, will always be a permutation of $D_1,\cdots, D_n$ (and thus have the same diagonal word), we will repeatedly use the following lemma.
\begin{lemma} If $PF$ and $PF'$ have the same diagonal word:
\begin{enumerate}
\item $\operatorname{area}({PF})=\operatorname{area}({PF'})$
\item $\operatorname{ides}({PF})=\operatorname{ides}({PF'})$ if and only if dominoes of the form $\left[\begin{matrix}r\\g\end{matrix} \right]$ and $\left[\begin{matrix}r+1\\g\end{matrix} \right]$ occur in same relative order in $PF$ and $PF'$.
\end{enumerate}\label{lem:easyprop}
\end{lemma}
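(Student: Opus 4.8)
The plan is to reduce both statements to the remark made just above the lemma: $PF$ and $PF'$ have the same diagonal word precisely when $PF'$ is a rearrangement of the dominoes of $PF$, equivalently, when for every diagonal $g$ the set of cars lying in diagonal $g$ is the same in $PF$ and $PF'$. In particular the diagonal $g(r)$ containing a car $r$ is an invariant of the diagonal word, and I will use this throughout.

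For part (1) I would simply note that $\operatorname{area}(PF)=\sum_i g_i$ depends only on the multiset $\{g_1,\dots,g_n\}$ of diagonal entries of the dominoes, and this multiset is unchanged when the dominoes are permuted.

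For part (2) I would fix $r\in\{1,\dots,n-1\}$ and split into two cases according to whether $g(r)=g(r+1)$. If $g(r)\ne g(r+1)$, then, since the word is obtained by reading the diagonals starting from the one farthest from the main diagonal and working inward, whichever of $r$ and $r+1$ lies on the farther diagonal is recorded first in the word; hence whether $r\in\operatorname{ides}(PF)$ is determined solely by the comparison of $g(r)$ and $g(r+1)$, which is the same for $PF$ and $PF'$, and no pair of dominoes $\left[\begin{matrix}r\\g\end{matrix}\right],\left[\begin{matrix}r+1\\g\end{matrix}\right]$ with a common $g$ exists in this case, so such $r$ impose no condition on the right-hand side. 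If $g(r)=g(r+1)=g$, then both dominoes $\left[\begin{matrix}r\\g\end{matrix}\right]$ and $\left[\begin{matrix}r+1\\g\end{matrix}\right]$ occur in $PF$ (and in $PF'$); within diagonal $g$ the word lists cars from northeast to southwest, whereas in the two-line array the dominoes of diagonal $g$ appear left to right in southwest-to-northeast order of their cells, so $r+1$ precedes $r$ in $\operatorname{word}(PF)$ if and only if the domino $\left[\begin{matrix}r+1\\g\end{matrix}\right]$ occurs to the right of $\left[\begin{matrix}r\\g\end{matrix}\right]$ in $PF$, and likewise for $PF'$. Putting the two cases together, $\operatorname{ides}(PF)=\operatorname{ides}(PF')$ will hold exactly when for every $r$ with $g(r)=g(r+1)=g$ the dominoes $\left[\begin{matrix}r\\g\end{matrix}\right]$ and $\left[\begin{matrix}r+1\\g\end{matrix}\right]$ occur in the same relative order in $PF$ and $PF'$, which is the assertion.

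The only point requiring care, and the one I would write out in detail, is that last geometric translation: reading "$r+1$ occurs before $r$ in the word" as a statement about the left-to-right order of the two dominoes in the array. It follows from the definition of the word once one checks that within a fixed diagonal an increase in row index corresponds to a move toward the northeast, so that the reading order of the cars of a diagonal is the reverse of the left-to-right order of the corresponding dominoes; combined with the invariance of $g(\cdot)$ under rearranging dominoes, this finishes the proof, and the rest is bookkeeping.
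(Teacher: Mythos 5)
Your proposal is correct and follows essentially the same route as the paper: part (1) via the invariance of the multiset of diagonal entries, and part (2) by splitting on whether $r$ and $r+1$ share a diagonal, noting that cars in distinct diagonals keep their relative order in the word and that within a common diagonal the word order matches the left-to-right order of the two dominoes. The paper's own proof is just a terser version of this, leaving the same-diagonal reading-order translation implicit where you spell it out.
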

\begin{proof} The first statement is immediate from the definition of area. Since we read the word of a parking function by diagonal, if $r$ and $r+1$ occur in distinct diagonals in $PF$ (and thus in the same distinct diagonals as $PF'$), we know they occur in the same relative order in the words of $PF$ and $PF'$.
\end{proof}
\section{The Easy Maps}
We begin by defining $f_1(PF)$ and $f_2(PF)$ as simple permutations on the dominoes of $PF$. 
\subsection{Defining $f_1$.}
Assume $PF$ has composition $[2,n-2]$. Intuitively, we currently have cars $r_1$ and $r_3$ in the main diagonal, separated by a single car, and we would like to obtain a parking function with $r_1$ and $r_3$ together.
Define the following map: 
$$f_1(PF)=\begin{cases}[D_1,D_3,D_2,D_4\cdots,D_n] & r_2>r_3\\
[D_3,D_1,D_2,D_4\cdots,D_n] & r_2<r_3\end{cases}
$$
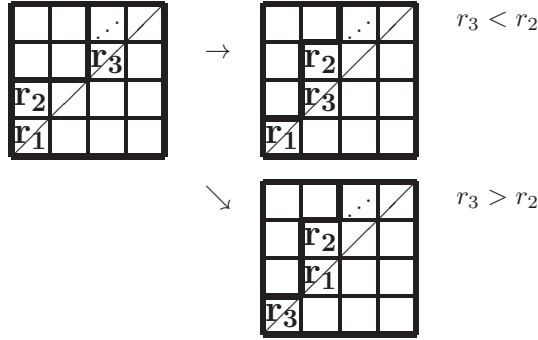
\begin{figure}
\begin{center}
\begin{tabular}{cccc}
\multirow{3}{*}{
\setlength{\unitlength}{.1in}\begin{picture}(8,8)\linethickness{1pt}\multiput(0, 0)(2, 0){5}{\line(0, 1){8}}\multiput(0, 0)(0, 2){5}{\line(1,0){8}}\linethickness{.1pt}\put(0,0){\line(1, 1){8}}\linethickness{2pt}\multiput(0, 0)(8, 0){2}{\line(0, 1){8}}\multiput(0, 0)(0,8){2}{\line(1,0){8}}\put(0,0){\line(0, 1){2}}\put(0,2){\line(0, 1){2}}\put(4,4){\line(0, 1){2}}\put(4,6){\line(0, 1){2}}\put(0,2){\line(1,0){0}}\put(0,4){\line(1,0){4}}\put(4,6){\line(1,0){0}}\Large{\put(1,1){\makebox(0,0){$\mathbf{{r_1}}$}}}\Large{\put(1,3){\makebox(0,0){$\mathbf{{r_2}}$}}}\Large{\put(5,5){\makebox(0,0){$\mathbf{{r_3}}$}}}\small{\put(5,7){\makebox(0,0){$\mathbf{{\iddots}}$}}}\end{picture}
}&&
\multirow{3}{*}{\setlength{\unitlength}{.1in}\begin{picture}(8,8)\linethickness{1pt}\multiput(0, 0)(2, 0){5}{\line(0, 1){8}}\multiput(0, 0)(0, 2){5}{\line(1,0){8}}\linethickness{.1pt}\put(0,0){\line(1, 1){8}}\linethickness{2pt}\multiput(0, 0)(8, 0){2}{\line(0, 1){8}}\multiput(0, 0)(0,8){2}{\line(1,0){8}}\put(0,0){\line(0, 1){2}}\put(2,2){\line(0, 1){2}}\put(2,4){\line(0, 1){2}}\put(4,6){\line(0, 1){2}}\put(0,2){\line(1,0){2}}\put(2,4){\line(1,0){0}}\put(2,6){\line(1,0){2}}\Large{\put(1,1){\makebox(0,0){$\mathbf{{r_1}}$}}}\Large{\put(3,3){\makebox(0,0){$\mathbf{{r_3}}$}}}\Large{\put(3,5){\makebox(0,0){$\mathbf{{r_2}}$}}}\small{\put(5,7){\makebox(0,0){$\mathbf{{\iddots}}$}}}\end{picture}
}&$r_3<r_2$\\&$\rightarrow$&\vspace{.6in}&\\\multirow{3}{*}{
\setlength{\unitlength}{.1in}}&$\searrow$&
\multirow{3}{*}{\setlength{\unitlength}{.1in}\begin{picture}(8,8)\linethickness{1pt}\multiput(0, 0)(2, 0){5}{\line(0, 1){8}}\multiput(0, 0)(0, 2){5}{\line(1,0){8}}\linethickness{.1pt}\put(0,0){\line(1, 1){8}}\linethickness{2pt}\multiput(0, 0)(8, 0){2}{\line(0, 1){8}}\multiput(0, 0)(0,8){2}{\line(1,0){8}}\put(0,0){\line(0, 1){2}}\put(2,2){\line(0, 1){2}}\put(2,4){\line(0, 1){2}}\put(4,6){\line(0, 1){2}}\put(0,2){\line(1,0){2}}\put(2,4){\line(1,0){0}}\put(2,6){\line(1,0){2}}\Large{\put(1,1){\makebox(0,0){$\mathbf{{r_3}}$}}}\Large{\put(3,3){\makebox(0,0){$\mathbf{{r_1}}$}}}\Large{\put(3,5){\makebox(0,0){$\mathbf{{r_2}}$}}}\small{\put(5,7){\makebox(0,0){$\mathbf{{\iddots}}$}}}\end{picture}
}&$r_3>r_2$\vspace{.5in}\end{tabular}
\end{center}
\caption{The $f_1$ map}\label{fig:f1}
\end{figure}
See Figure \ref{fig:f1}.
\begin{example}$f_1\left(\left[\begin{matrix}1&3&2&4&5\\0&1&0&1&1\end{matrix}\right]
\right)=\left[\begin{matrix}1&2&3&4&5\\0&0&1&1&1\end{matrix}\right]
$
\end{example}
\begin{example}$f_1\left(\left[\begin{matrix}1&2&3&4&5\\0&1&0&1&1\end{matrix}\right]\right)=\left[\begin{matrix}3&1&2&4&5\\0&0&1&1&1\end{matrix}\right]$
\end{example}
\begin{lemma} $f_1(PF)$ is in $\mathcal{A}^\tau_{1, n-1}$ . 
\end{lemma}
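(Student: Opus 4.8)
The plan is to verify directly the two defining conditions for $f_1(PF)$ to be a parking function, then simply read off its composition and its diagonal word.

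First I would record the constraints forced by $\operatorname{comp}(PF)=[2,n-2]$. Since this composition has exactly two parts, the only diagonals equal to $0$ are $g_1$ and $g_3$; in particular $g_2\ne 0$ and $g_j\ne 0$ for $j\ge 4$. Combined with the Dyck path condition $g_{i+1}\le g_i+1$ and $g_1=0$, this forces $g_2=1$, and (when $n\ge 4$) $g_4\le g_3+1=1$, hence $g_4=1$. In either branch of the definition of $f_1$ the diagonal sequence of $f_1(PF)$ is $(g_1,g_3,g_2,g_4,\dots,g_n)=(0,0,1,g_4,\dots,g_n)$, so the Dyck path condition for $f_1(PF)$ is immediate: the new initial adjacencies $0\to 0$ and $0\to 1$ are legal, the adjacency $g_3'=1\to g_4'=g_4\le 1$ is legal, and all remaining adjacencies are literally those of $PF$.

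Next I would verify the increasing column condition for $f_1(PF)=[r_1',\dots,r_n']$. A strict rise $g_{i+1}'=g_i'+1$ can only happen at $i=2$ (where $g_2'=0$, $g_3'=1$) or at some $i\ge 4$, and at the latter the inequality $r_{i+1}'>r_i'$ is inherited from $PF$ because those dominoes are untouched. At $i=2$ there are two cases. If $r_2>r_3$ then $(r_2',r_3')=(r_3,r_2)$ and the required $r_2'<r_3'$ is exactly the case hypothesis. If $r_2<r_3$ then $(r_2',r_3')=(r_1,r_2)$ and we need $r_1<r_2$; but in $PF$ we have $g_2=g_1+1$, so $r_1<r_2$ already holds by the increasing column condition for $PF$. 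This single check at $i=2$ is the only real point of the argument, and it is less an obstacle than a confirmation that the two branches of the definition of $f_1$ were chosen correctly.

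Finally, since the diagonal sequence $(0,0,1,g_4,\dots,g_n)$ of $f_1(PF)$ vanishes in exactly positions $1$ and $2$, we get $\operatorname{comp}(f_1(PF))=[1,n-1]$; and since $f_1(PF)$ is by construction a rearrangement of the dominoes $D_1,\dots,D_n$ of $PF$, it has the same diagonal word $\tau$ (this also yields $\operatorname{area}(f_1(PF))=\operatorname{area}(PF)$ via Lemma~\ref{lem:easyprop}). Hence $f_1(PF)\in\mathcal{A}^\tau_{1,n-1}$. The degenerate case $n=3$ (composition $[2,1]$, no $D_4$) is covered by the same argument with the tail $D_4,\dots,D_n$ empty.
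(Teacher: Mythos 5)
Your proof is correct and follows essentially the same route as the paper's: it notes that the composition $[2,n-2]$ forces $[g_1,g_2,g_3,g_4]=[0,1,0,1]$, checks the Dyck path and increasing column conditions for both branches of $f_1$, and reads off the composition and diagonal word. You simply spell out the column check at the one new rise in more detail than the paper does.
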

\begin{proof} Since $[g_1,g_2,g_3,g_4]=[0,1,0,1]$, it is easy to see that in either case $f_1(PF)$ satisfies the Dyck path condition. Similarly, the fact that $PF$ satisfies the increasing column condition implies that $f_1(PF)$ does so as well. Thus $f_1(PF)$ is a parking function. Clearly it has the required composition and diagonal word.
\end{proof}
\begin{lemma}\label{lem:f1wt} $\operatorname{wt}(PF)=q\operatorname{wt}(f_1(PF))$
\end{lemma}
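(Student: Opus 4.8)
The plan is to show that the single domino transposition defining $f_1$ changes the dinv by exactly $-1$ and leaves the area and ides untouched, so that $\operatorname{wt}(PF) = q\operatorname{wt}(f_1(PF))$. First I would invoke Lemma \ref{lem:easyprop}: since $f_1(PF)$ is a rearrangement of the dominoes $D_1,\dots,D_n$ of $PF$, it has the same diagonal word $\tau$, hence the same area; and for the ides it suffices to check that dominoes of the form $\left[\begin{smallmatrix}r\\g\end{smallmatrix}\right]$ and $\left[\begin{smallmatrix}r+1\\g\end{smallmatrix}\right]$ occur in the same relative order before and after applying $f_1$. The only dominoes whose relative order changes are $D_1=\left[\begin{smallmatrix}r_1\\0\end{smallmatrix}\right]$ and $D_3=\left[\begin{smallmatrix}r_3\\0\end{smallmatrix}\right]$ (and their position relative to $D_2=\left[\begin{smallmatrix}r_2\\1\end{smallmatrix}\right]$, but $D_2$ is in a different diagonal, so by Lemma \ref{lem:easyprop} it contributes nothing to ides-order). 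Since $D_1$ and $D_3$ are both in diagonal $0$, and in both pieces of the definition of $f_1$ the domino $D_1$ is placed before $D_3$ exactly when $r_1 < r_3$... wait, that is automatic here since $r_1=1$ is the car in the first cell — but in general one checks that $D_1$ precedes $D_3$ in $PF$ and also in $f_1(PF)$ in the case $r_2 > r_3$, while in the case $r_2 < r_3$ the order of $D_1,D_3$ is reversed, yet then the increasing-column / parking condition forces $r_1 < r_3$ (they would otherwise not both legally sit in diagonal $0$ with a car of diagonal $1$ squeezed appropriately), so the ides is still preserved because reversing two same-diagonal dominoes only affects ides if their cars are consecutive, and one verifies directly that consecutiveness is compatible with the constraints. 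I would streamline this by simply noting: $f_1$ only permutes dominoes within diagonal $0$, hence by Lemma \ref{lem:easyprop}(2) it preserves ides iff it preserves the relative order of same-diagonal consecutive-labelled dominoes; a short case check on $r_1,r_2,r_3$ (three cars, the parking constraints $g_2=g_1+1$ forcing $r_1<r_2$, etc.) dispatches this.

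The heart of the argument is the dinv bookkeeping. The dinv contributions of $PF$ fall into three groups: (i) pairs not involving $D_1,D_2,D_3$; (ii) pairs with exactly one of $D_1,D_2,D_3$ and one later domino $D_j$ ($j\ge 4$); (iii) pairs among $D_1,D_2,D_3$ themselves. Group (i) is obviously unchanged. For group (ii): a pair contributes primary or secondary dinv depending only on the diagonals and cars of the two dominoes, not on their horizontal position among dominoes of the same diagonal — and $f_1$ does not move $D_1,D_2,D_3$ relative to any $D_j$ with $j\ge 4$ in a way that crosses a diagonal boundary (they all still precede every $D_j$), nor does it change which diagonal anyone is in; so every group-(ii) pair contributes the same before and after. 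Thus the entire change in dinv comes from group (iii), the three pairs $\{D_1,D_2\}$, $\{D_1,D_3\}$, $\{D_2,D_3\}$, and I would compute this by brute enumeration. In $PF$ we have diagonals $(g_1,g_2,g_3)=(0,1,0)$: the pair $(D_1,D_3)$ is primary attacking (both diagonal $0$), the pairs $(D_1,D_2)$ and $(D_2,D_3)$ are the candidates for secondary dinv ($D_1$ left of $D_2$ with $D_2$ one diagonal above $D_1$: that is secondary attacking with $a=D_1$ on the left in the lower diagonal — actually secondary attacking is "$a$ left of $b$, $b$ one diagonal below $a$", so here $D_2$ is $a$? no: $D_2$ is in diagonal $1$, $D_1$ in diagonal $0$, $D_1$ is to the left, so $a=D_2$ would need to be left of $b=D_1$ — it is not; hence $(D_1,D_2)$ is not secondary attacking, while $(D_2,D_3)$ with $D_2$ left of $D_3$, $D_3$ one below $D_2$: that IS secondary attacking with $a=D_2$). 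After $f_1$ the three dominoes become $[D_1,D_3,D_2]$ or $[D_3,D_1,D_2]$ with diagonals $(0,0,1)$: now $D_1,D_3$ still primary attacking, and $D_2$ (diagonal $1$) is now to the right of both, so neither $(D_1,D_2)$ nor $(D_3,D_2)$ is secondary attacking at all. So I tabulate: in case $r_2>r_3$, the order is $[D_1,D_3,D_2]$, in case $r_2<r_3$ it is $[D_3,D_1,D_2]$, and I compare the count of dinv among these three dominoes in $PF$ versus $f_1(PF)$, using the constraints $r_1<r_2$ (from $g_2=g_1+1$) and (in the $r_2<r_3$ branch) $r_1<r_3$.

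I expect the main obstacle to be precisely this last enumeration — getting the secondary-attacking orientation conventions right and making sure all parking-function legality constraints among $r_1,r_2,r_3$ are correctly used, since an error there would give $\pm$ the wrong shift. Concretely I would organize it as two cases. Case $r_2>r_3$: in $PF$, pair $\{D_2,D_3\}$ (secondary attacking, $a=D_2$ on the left is the larger, so this IS a secondary dinv); pair $\{D_1,D_3\}$ primary attacking, dinv iff $r_1<r_3$; pair $\{D_1,D_2\}$ primary? no, different diagonals, not attacking. In $f_1(PF)=[D_1,D_3,D_2]$: $\{D_1,D_3\}$ primary, dinv iff $r_1<r_3$ (same as before); the pairs involving $D_2$ are no longer attacking. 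Net change $= -[\,\{D_2,D_3\}\text{ was a secondary dinv}\,] = -1$, provided $r_2>r_3$ indeed makes $\{D_2,D_3\}$ a secondary dinv (it does, $a=D_2>r_3$). Case $r_2<r_3$: in $PF$, $\{D_2,D_3\}$ secondary attacking with $a=D_2$ on the left, and $r_2<r_3$ so $a$ is smaller — NOT a secondary dinv; but now $f_1(PF)=[D_3,D_1,D_2]$, and the pair $\{D_3,D_1\}$ is primary attacking with $D_3$ on the left, $r_3>r_1$, so $D_3$-left-larger means this is NOT a primary dinv, whereas in $PF$ the pair was $\{D_1,D_3\}$ with $D_1$ on the left and $r_1<r_3$, which WAS a primary dinv — so we lose one primary dinv, net change $-1$ again (all other group-(iii) pairs non-attacking or unchanged). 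In both cases $\operatorname{dinv}(f_1(PF))=\operatorname{dinv}(PF)-1$, hence $q^{\operatorname{dinv}(PF)}=q\cdot q^{\operatorname{dinv}(f_1(PF))}$; combined with equal area and equal $Q_{\operatorname{ides}}$ this yields $\operatorname{wt}(PF)=q\,\operatorname{wt}(f_1(PF))$. I would double-check the two cases against the two worked examples already in the text, which have $\operatorname{dinv}$ dropping by $1$ under $f_1$, as a sanity check before writing the final version.
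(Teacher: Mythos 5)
Your proposal is correct and follows essentially the same route as the paper: area and the ides criterion come from Lemma \ref{lem:easyprop}, ides is preserved because $r_1<r_2<r_3$ rules out consecutiveness of $r_1$ and $r_3$ in the only case where same-diagonal order changes, and the dinv drops by exactly one via the pair $\{r_2,r_3\}$ (when $r_2>r_3$) or $\{r_1,r_3\}$ (when $r_2<r_3$) --- the paper simply states this more tersely, leaving your explicit ``all other pairs unchanged'' bookkeeping implicit. (Your parenthetical aside that $r_1=1$ is false in general, but you correctly discard it and it plays no role in the argument.)
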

\begin{proof} We only change the order of elements in a given diagonal when $r_3>r_2$, in which case $r_3$ occurs before $r_1$ in $f_1(PF)$. Since $PF$ satisfies the increasing column condition, $r_1<r_2(<r_3)$, so $r_1$ and $r_3$ differ by more than one and thus $\operatorname{ides}(PF)=\operatorname{ides}(f_1(PF))$ in either case. When $r_3<r_2$, $r_2$ and $r_3$ together form a diagonal inversion in $PF$ but not in $f_1(PF)$. Similarly, when $r_3>r_2(>r_1)$, there is a primary diagonal inversion formed between $r_1$ and $r_3$ in $PF$ but not in $f_1(PF)$. By Lemma \ref{lem:easyprop}, this is all we need to verify.
\end{proof}
Let $$S_1=\left\{\left[\begin{matrix}r_1&r_2&r_3&r_4&r_5&\dots&r_n\\0&0&1&1&g_5&\dots &g_n\end{matrix}\right]\in \mathcal{A}^\tau_{1,n-1}: r_1<r_3\text{ \& } r_2<r_4\right\}$$
and
$$S_2=\left\{\left[\begin{matrix}r_1&r_2&r_3&r_4&r_5&\dots&r_n\\0&0&1&1&g_5&\dots &g_n\end{matrix}\right]\in \mathcal{A}^\tau_{1,n-1}: r_3<r_1\text{ \& } r_1<r_4\right\}$$
\begin{lemma} $S_1$ ($S_2$) is the image of the set of parking functions with $r_2>r_3$ ($r_2<r_3$ respectively) under $f_1$.
\end{lemma}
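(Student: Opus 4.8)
The plan is to verify the two set equalities directly by unwinding the definition of $f_1$ and checking the defining inequalities of $S_1$ and $S_2$. Recall that $f_1$ sends a parking function $PF\in\mathcal{A}^\tau_{2,n-2}$ to $[D_1,D_3,D_2,D_4,\dots,D_n]$ when $r_2>r_3$ and to $[D_3,D_1,D_2,D_4,\dots,D_n]$ when $r_2<r_3$; note that $r_2\ne r_3$ always, since the two cars occupy distinct diagonals ($g_2=1$, $g_3=0$). In either branch the first four diagonals of the image are $[0,0,1,1]$, which matches the shape of the elements of $S_1$ and $S_2$; and by the earlier lemmas $f_1(PF)$ lies in $\mathcal{A}^\tau_{1,n-1}$, so membership in that ambient set is automatic. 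Thus everything reduces to matching the two remaining inequalities in each case.

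First I would handle the $r_2>r_3$ branch. Here $f_1(PF)=[D_1,D_3,D_2,D_4,\dots,D_n]$, so writing the image as $\left[\begin{smallmatrix}s_1&s_2&s_3&s_4&\cdots\\0&0&1&1&\cdots\end{smallmatrix}\right]$ we have $s_1=r_1$, $s_2=r_3$, $s_3=r_2$, $s_4=r_4$. The defining conditions of $S_1$, namely $s_1<s_3$ and $s_2<s_4$, become $r_1<r_2$ and $r_3<r_4$. The first holds because $PF$ satisfies the increasing column condition (both $r_1$ and $r_2$ sit in column one of the composition $[2,n-2]$ with $g_2=g_1+1$). The second, $r_3<r_4$, likewise follows from the increasing column condition applied to $D_3,D_4$, since $g_4=g_3+1$. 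Conversely, given an element of $S_1$ one reconstructs $PF=[D_1,D_3,D_2,D_4,\dots,D_n]$ by the same transposition and checks it is a valid parking function in $\mathcal{A}^\tau_{2,n-2}$ with $r_2>r_3$: the inequality $r_2>r_3$ translates to $s_3>s_2$, i.e. the condition that makes $f_1$ land in $S_1$ rather than $S_2$; I would need to see that this is forced, which it is because $s_2<s_3$ would put the preimage in the other branch. So $f_1$ restricted to $\{PF:r_2>r_3\}$ is a bijection onto $S_1$.

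Next the $r_2<r_3$ branch: $f_1(PF)=[D_3,D_1,D_2,D_4,\dots,D_n]$, so $s_1=r_3$, $s_2=r_1$, $s_3=r_2$, $s_4=r_4$. The $S_2$ conditions $s_3<s_1$ and $s_1<s_4$ become $r_2<r_3$ and $r_3<r_4$. The first is exactly the branch hypothesis; the second is again the increasing column condition on $D_3,D_4$. Running this in reverse, an element of $S_2$ yields $PF=[D_3,D_1,D_2,D_4,\dots,D_n]$ with $r_2=s_3<s_1=r_3$, placing it in the correct branch. I expect the routine part to be the verification that the reconstructed arrays are genuine parking functions with the right composition and diagonal word, but that is already covered by the structure: a transposition of dominoes preserves the diagonal word, and the diagonal sequences $[0,1,0,1,\dots]$ (preimage) versus $[0,0,1,1,\dots]$ (image) are each Dyck, while the increasing column condition transfers as in the proof that $f_1(PF)\in\mathcal{A}^\tau_{1,n-1}$. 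The only genuinely delicate point is confirming that the two branches of $f_1$ have disjoint images — equivalently, that no element of $\mathcal{A}^\tau_{1,n-1}$ of this shape can lie in both $S_1$ and $S_2$ — which follows since $S_1$ demands $r_1<r_3$ (using the image names, $s_1<s_2$) while $S_2$ demands $r_3<r_1$ ($s_2<s_1$); these are incompatible because $s_1\ne s_2$. With disjointness in hand, the two bijections assemble to give the claim.
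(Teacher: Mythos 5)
Your proposal follows essentially the same route as the paper: unwind the two branches of $f_1$ and match the defining inequalities of $S_1$ and $S_2$ against the increasing column condition of the preimage ($r_1<r_2$ and $r_3<r_4$), which is exactly the paper's one-line reindexing argument. Your forward computations are correct in both branches ($s_1<s_3$ and $s_2<s_4$ when $r_2>r_3$; $s_3<s_1$ and $s_1<s_4$ when $r_2<r_3$), and the reconstruction of the preimage by the inverse permutation of dominoes is the right way to get surjectivity.

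Two local slips are worth fixing, though neither is fatal. First, at the one point where surjectivity genuinely needs a reason --- why the reconstructed preimage of an element of $S_1$ satisfies the branch condition $r_2>r_3$, i.e.\ $s_3>s_2$ --- your justification (``which it is because $s_2<s_3$ would put the preimage in the other branch'') is circular as written: $s_2<s_3$ is precisely the statement to be established. The correct reason is that the element of $S_1$ is itself a parking function whose diagonal sequence begins $0,0,1,1$, so its increasing column condition at positions $2,3$ forces $s_2<s_3$; the same fact supplies $r_1<r_2$ for the reconstructed preimage in the $S_2$ case. Second, your closing disjointness remark misstates the definitions: $S_1$ does not demand $s_1<s_2$ (for instance, cars $2,1,3,4$ over diagonals $0,0,1,1$ lie in $S_1$ with $s_1>s_2$). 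Disjointness of $S_1$ and $S_2$ is not needed for this lemma --- it is used only afterwards to conclude $f_1$ is a bijection onto $S$ --- and when needed it follows immediately from $s_1<s_3$ versus $s_3<s_1$. With these repairs your argument is complete and coincides with the paper's.
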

\begin{proof}
If we reindex the cars in Figure \ref{fig:f1}, recalling that $r_1<r_2$ and $r_3<r_4$ (using the original indices here) by the increasing column condition, we have the desired result. 
\end{proof}
Let $S=S_1\cup S_2$. Since $S_1$ and $S_2$ are disjoint, it is easy to see that $f_1$ is a bijection from $C^\tau_{2, n-2}$ to $S$. 
\subsection{Defining $f_2$}
We begin by defining $g$, the inverse of $f_2$. Thus let $\operatorname{comp}(PF)=[n-2,2]$ and say that $$g(PF)=[D_1,\cdots,D_{n-2},D_n,D_{n-1}].$$See Figure \ref{fig:f2}.
\begin{figure}
\begin{center}
\begin{tabular}{ccc}
\multirow{3}{*}{
\setlength{\unitlength}{.1in}\begin{picture}(8,8)\linethickness{1pt}\multiput(0, 0)(2, 0){5}{\line(0, 1){8}}\multiput(0, 0)(0, 2){5}{\line(1,0){8}}\linethickness{.1pt}\put(0,0){\line(1, 1){8}}\linethickness{2pt}\multiput(0, 0)(8, 0){2}{\line(0, 1){8}}\multiput(0, 0)(0,8){2}{\line(1,0){8}}\put(0,0){\line(0, 1){2}}\put(0,2){\line(0, 1){2}}\put(4,4){\line(0, 1){2}}\put(4,6){\line(0, 1){2}}\put(0,2){\line(1,0){0}}\put(0,4){\line(1,0){4}}\put(4,6){\line(1,0){0}}\Large{\put(1,1){\makebox(0,0){$\mathbf{{r}}$}}}\small{\put(1,3){\makebox(0,0){$\mathbf{{\iddots}}$}}}\Large{\put(5,5){\makebox(0,0){$\mathbf{{r\prime}}$}}}\Large{\put(5,7){\makebox(0,0){$\mathbf{{r\prime\prime}}$}}}\end{picture}
}&&
\multirow{3}{*}{\setlength{\unitlength}{.1in}\begin{picture}(8,8)\linethickness{1pt}\multiput(0, 0)(2, 0){5}{\line(0, 1){8}}\multiput(0, 0)(0, 2){5}{\line(1,0){8}}\linethickness{.1pt}\put(0,0){\line(1, 1){8}}\linethickness{2pt}\multiput(0, 0)(8, 0){2}{\line(0, 1){8}}\multiput(0, 0)(0,8){2}{\line(1,0){8}}\put(0,0){\line(0, 1){2}}\put(0,2){\line(0, 1){2}}\put(2,4){\line(0, 1){2}}\put(6,6){\line(0, 1){2}}\put(0,2){\line(1,0){0}}\put(0,4){\line(1,0){2}}\put(2,6){\line(1,0){4}}\Large{\put(1,1){\makebox(0,0){$\mathbf{{r}}$}}}\small{\put(1,3){\makebox(0,0){$\mathbf{{\iddots}}$}}}\Large{\put(3,5){\makebox(0,0){$\mathbf{{r\prime\prime}}$}}}\Large{\put(7,7){\makebox(0,0){$\mathbf{{r\prime}}$}}}\end{picture}
}\\&$\rightarrow$&\vspace{.3in}\end{tabular}
\end{center}
\caption{The $g=f_2^{-1}$ map}\label{fig:f2}
\end{figure}
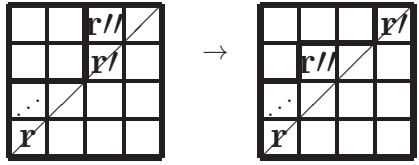
\begin{example}
$g\left(\left[\begin{matrix}1&2&3&4&5\\0&1&1&0&1\end{matrix}\right]\right)=\left[\begin{matrix}1&2&3&5&4\\0&1&1&1&0\end{matrix}\right]$
\end{example}
\begin{lemma} $g(PF)\in \mathcal{A}^\tau_{n-1,1}$ and $\operatorname{wt}(g(PF))=q\operatorname{wt}(PF)$.\end{lemma}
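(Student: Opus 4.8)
The plan is to verify, in order, that $g(PF)$ is a parking function, that $g(PF)\in\mathcal{A}^\tau_{n-1,1}$, and finally that swapping the last two dominoes multiplies the weight by exactly $q$; by Lemma~\ref{lem:easyprop} that last point reduces to showing $\operatorname{dinv}(g(PF))=\operatorname{dinv}(PF)+1$.

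First I would pin down the local structure forced by $\operatorname{comp}(PF)=[n-2,2]$. We have $g_{n-1}=0$, and since $g_n\neq 0$ while $g_n\le g_{n-1}+1=1$ we get $g_n=1$; hence the increasing column condition applied to $PF$ gives $r_n>r_{n-1}$. Moreover $g_{n-2}\ge 1$, because the second part of $\operatorname{comp}(PF)$ begins only at position $n-1$. The diagonal sequence of $g(PF)=[D_1,\dots,D_{n-2},D_n,D_{n-1}]$ is then $g_1,\dots,g_{n-2},1,0$, so the only adjacencies not inherited from $PF$ sit among its last three dominoes; the Dyck path condition for them follows from $1\le g_{n-2}+1$ and $0\le 1+1$, and the increasing column condition is vacuous there because $1\le g_{n-2}$ (no diagonal jump at position $n-1$) and $0<1+1$. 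Thus $g(PF)$ is a parking function; its diagonal-zero positions are exactly $\{1,n\}$, so $\operatorname{comp}(g(PF))=[n-1,1]$, and since $g(PF)$ is a rearrangement of $D_1,\dots,D_n$ it has diagonal word $\tau$. This gives $g(PF)\in\mathcal{A}^\tau_{n-1,1}$.

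For the weight I would invoke Lemma~\ref{lem:easyprop}. Area is preserved automatically, and since the only two dominoes whose relative order is changed by $g$, namely $D_{n-1}=\left[\begin{matrix}r_{n-1}\\0\end{matrix}\right]$ and $D_n=\left[\begin{matrix}r_n\\1\end{matrix}\right]$, lie in different diagonals, no pair of dominoes $\left[\begin{matrix}r\\g\end{matrix}\right],\left[\begin{matrix}r+1\\g\end{matrix}\right]$ is reordered, so $\operatorname{ides}(g(PF))=\operatorname{ides}(PF)$. For dinv, the key remark is that the contribution of an unordered pair of dominoes depends only on their diagonals, their cars, and their left-to-right order in the array; every pair other than $\{D_{n-1},D_n\}$ keeps that order, since any $D_j$ with $j\le n-2$ precedes both $D_{n-1}$ and $D_n$ before and after the swap, so only $\{D_{n-1},D_n\}$ can change the count. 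In $PF$ the left member of that pair is the diagonal-$0$ domino $D_{n-1}$ and the right member the diagonal-$1$ domino $D_n$, so the pair is neither primary nor secondary attacking and contributes $0$; in $g(PF)$ the roles are reversed, so $D_n$ (diagonal $1$) lies to the left of $D_{n-1}$ (diagonal $0$), making the pair secondary attacking, and since $r_n>r_{n-1}$ it forms a secondary dinv and contributes $1$. Hence $\operatorname{dinv}(g(PF))=\operatorname{dinv}(PF)+1$, and therefore $\operatorname{wt}(g(PF))=q\operatorname{wt}(PF)$.

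The main obstacle is the bookkeeping in the last paragraph: one must carefully justify that $\{D_{n-1},D_n\}$ is the unique pair whose dinv contribution changes, and that the change is exactly $+1$ rather than $-1$ (this is precisely where recording $r_n>r_{n-1}$ at the outset pays off). Everything else is a routine unwinding of the Dyck path, increasing column, composition, and diagonal word definitions.
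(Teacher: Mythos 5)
Your proof is correct and takes essentially the same approach as the paper: verify the Dyck path and increasing column conditions locally, read off the composition and diagonal word, and note that the only reordered pair $\{D_{n-1},D_n\}$ goes from non-attacking to a secondary dinv (using $r_n>r_{n-1}$ from the column condition), with Lemma \ref{lem:easyprop} covering area and ides. The only caveat is that your step ``$g_{n-2}\ge 1$'' (and indeed the lemma itself) silently requires $n>3$, which is exactly the assumption the paper states explicitly at this point in its proof.
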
 
\begin{proof} Since we assume $n>3$, moving $D_n$ to the left does not form any new columns, and $g(PF)$ satisfies the Dyck path and increasing column conditions. Moreover, the last two elements of $PF$, call them $r\prime$ and $r\prime\prime$ form a column so $r\prime<r\prime\prime$ and thus $r\prime$ and $r\prime\prime$ form a diagonal inversion in $g(PF)$, increasing the dinv by exactly one. Then by Lemma \ref{lem:easyprop}, $\operatorname{wt}(g(PF))=q\operatorname{wt}(PF)$.
\end{proof}
$$T=\left\{\left[\begin{matrix}r_1&r_2&r_3&\dots&r_{n-2}&r_{n-1}&r_n\\g_1&g_2&g_3&\dots&g_{n-2}& 1&0\end{matrix}\right]\in \mathcal{A}^{\tau}_{n-1,1}: r_{n-1}>r_n\right\}$$
\begin{lemma} $T$ is the image of $\mathcal{A}_{n-2,2}$ under $g$.
\end{lemma}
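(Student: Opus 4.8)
The plan is to establish the set equality $g(\mathcal{A}_{n-2,2}) = T$ by proving the two inclusions separately, the second one by writing down an explicit preimage.

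For the inclusion $g(\mathcal{A}_{n-2,2}) \subseteq T$, take $PF \in \mathcal{A}_{n-2,2}$. Since $\operatorname{comp}(PF) = [n-2,2]$ has exactly two parts, the rows $i$ with $g_i = 0$ are precisely $i = 1$ and $i = n-1$; in particular $g_{n-1}=0$ while $g_n \neq 0$, so the Dyck path condition $g_n \le g_{n-1}+1 = 1$ forces $g_n = 1$. Hence $D_{n-1}$ and $D_n$ form a single column and the increasing column condition gives $r_{n-1} < r_n$. Now $g(PF) = [D_1,\dots,D_{n-2},D_n,D_{n-1}]$ has diagonal sequence $[g_1,\dots,g_{n-2},1,0]$, its $(n-1)$-st car is the old $r_n$, and its $n$-th car is the old $r_{n-1}$; thus its $(n-1)$-st car exceeds its $n$-th car. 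Since the preceding lemma already gives $g(PF) \in \mathcal{A}^\tau_{n-1,1}$, we conclude $g(PF) \in T$.

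For the reverse inclusion $T \subseteq g(\mathcal{A}_{n-2,2})$, let $PF' \in T$, so $PF'$ has diagonal sequence $[g'_1,\dots,g'_{n-2},1,0]$, composition $[n-1,1]$, diagonal word $\tau$, and $r'_{n-1} > r'_n$. I would set $PF := [D'_1,\dots,D'_{n-2},D'_n,D'_{n-1}]$, i.e. swap the last two dominoes back, and verify that $PF \in \mathcal{A}^\tau_{n-2,2}$ and $g(PF) = PF'$. The diagonal sequence of $PF$ is $[g'_1,\dots,g'_{n-2},0,1]$; the Dyck path condition need only be re-checked at the last two steps, where $g_{n-1} = 0 \le g'_{n-2}+1$ and $g_n = 1 = g_{n-1}+1$, both fine. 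The only adjacency with $g_{i+1} = g_i + 1$ not inherited unchanged from $PF'$ is $i = n-1$ (for $i = n-2$ we have $g_{n-1}=0$ while $g_{n-2} = g'_{n-2} \ge 1$, using $n>3$, so there is no constraint there); there the increasing column condition demands exactly that the $n$-th car of $PF$, namely the old $r'_{n-1}$, be larger than the $(n-1)$-st car of $PF$, namely the old $r'_n$, which is precisely the defining inequality of $T$. So $PF$ is a parking function; its zero-rows are $1$ and $n-1$, whence $\operatorname{comp}(PF) = [n-2,2]$, and since its dominoes are those of $PF'$ rearranged, $\operatorname{diagword}(PF) = \tau$. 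Finally $g(PF) = [D'_1,\dots,D'_{n-2},D'_{n-1},D'_n] = PF'$, which finishes the inclusion; one may moreover note that $g$ is patently injective on $\mathcal{A}_{n-2,2}$ (swapping the last two dominoes back recovers the input), so $g$ is in fact a bijection from $\mathcal{A}_{n-2,2}$ onto $T$, which is what lets us define $f_2$ as its inverse.

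I do not expect a genuine obstacle: the whole argument is bookkeeping about the diagonal sequence at the last two positions. The one point worth flagging is that the inequality $r_{n-1} > r_n$ cutting out $T$ is exactly the condition the increasing column condition imposes on the preimage, and that the standing assumption $n > 3$ (already invoked in the preceding lemma) is what guarantees $g'_{n-2} \ge 1$ and that the swap creates no spurious column.
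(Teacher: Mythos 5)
Your proof is correct and follows essentially the argument the paper has in mind: the paper's own proof is a one-line appeal to the increasing column condition, and your two inclusions are exactly the straightforward bookkeeping (zero rows of the composition, the Dyck path condition, and the column inequality $r_{n-1}<r_n$ versus the defining inequality of $T$) that this remark compresses. The only quibble is that your invocation of $n>3$ at the step $i=n-2$ is unnecessary, since $g_{n-1}=0$ can never equal $g_{n-2}+1$ regardless of the value of $g_{n-2}$, but this does not affect correctness.
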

\begin{proof}Again, this is straightforward when we recall the increasing column condition.
\end{proof}
Now, we may define $f_2:T\mapsto \mathcal{A}^\tau_{n-2, 2}$ to be the inverse of $g$. 
\begin{example}
$f_2\left(\left[\begin{matrix}1&2&3&5&4\\0&1&1&1&0\end{matrix}\right]\right)=\left[\begin{matrix}1&2&3&4&5\\0&1&1&0&1\end{matrix}\right]$
\end{example}
\section{The remaining map}
Finally, we would like to define $f_3$, which when given a parking function of composition $[n-1,1]$ returns a parking function with composition $[1,n-1]$. A naive approach is to move the last element in the main diagonal of our parking function to the front, in a manner analogous to our construction of $f_1$. While by construction, this would give a parking function with the correct composition and ides, the result would rarely have the required singe decrease in dinv. See Figure \ref{fig:f3bad}, where such a map reduces the dinv of the parking function by two rather than the required one.
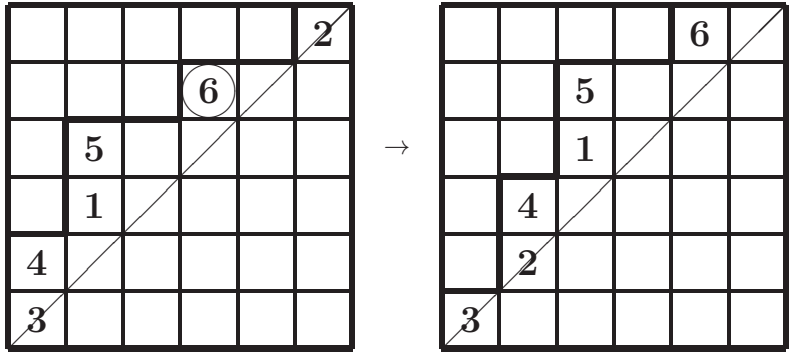
\begin{figure}
\begin{center}\vspace{.1in}
\begin{tabular}{rcl}\multirow{2}{*}{\setlength{\unitlength}{.15in}\begin{picture}(12,12)\linethickness{1pt}\multiput(0, 0)(2, 0){7}{\line(0, 1){12}}\multiput(0, 0)(0, 2){7}{\line(1,0){12}}\linethickness{.1pt}\put(0,0){\line(1, 1){12}}\linethickness{2pt}\multiput(0, 0)(12, 0){2}{\line(0, 1){12}}\multiput(0, 0)(0,12){2}{\line(1,0){12}}\put(0,0){\line(0, 1){2}}\put(0,2){\line(0, 1){2}}\put(2,4){\line(0, 1){2}}\put(2,6){\line(0, 1){2}}\put(6,8){\line(0, 1){2}}\put(10,10){\line(0, 1){2}}\put(0,2){\line(1,0){0}}\put(0,4){\line(1,0){2}}\put(2,6){\line(1,0){0}}\put(2,8){\line(1,0){4}}\put(6,10){\line(1,0){4}}\Large{\put(1,1){\makebox(0,0){$\mathbf{{3}}$}}}\Large{\put(1,3){\makebox(0,0){$\mathbf{{4}}$}}}\Large{\put(3,5){\makebox(0,0){$\mathbf{{1}}$}}}\Large{\put(3,7){\makebox(0,0){$\mathbf{{5}}$}}}\Large{\put(7,9){\makebox(0,0){$\mathbf{{6}}$}}}\Large{\put(11,11){\makebox(0,0){$\mathbf{{2}}$}}}\put(7,9){\circle{2}}\end{picture}}&&\multirow{2}{*}{\setlength{\unitlength}{.15in}\begin{picture}(12,12)\linethickness{1pt}\multiput(0, 0)(2, 0){7}{\line(0, 1){12}}\multiput(0, 0)(0, 2){7}{\line(1,0){12}}\linethickness{.1pt}\put(0,0){\line(1, 1){12}}\linethickness{2pt}\multiput(0, 0)(12, 0){2}{\line(0, 1){12}}\multiput(0, 0)(0,12){2}{\line(1,0){12}}\put(0,0){\line(0, 1){2}}\put(2,2){\line(0, 1){2}}\put(2,4){\line(0, 1){2}}\put(4,6){\line(0, 1){2}}\put(4,8){\line(0, 1){2}}\put(8,10){\line(0, 1){2}}\put(0,2){\line(1,0){2}}\put(2,4){\line(1,0){0}}\put(2,6){\line(1,0){2}}\put(4,8){\line(1,0){0}}\put(4,10){\line(1,0){4}}\Large{\put(1,1){\makebox(0,0){$\mathbf{{3}}$}}}\Large{\put(3,3){\makebox(0,0){$\mathbf{{2}}$}}}\Large{\put(3,5){\makebox(0,0){$\mathbf{{4}}$}}}\Large{\put(5,7){\makebox(0,0){$\mathbf{{1}}$}}}\Large{\put(5,9){\makebox(0,0){$\mathbf{{5}}$}}}\Large{\put(9,11){\makebox(0,0){$\mathbf{{6}}$}}}\end{picture}}\\&&\\&&\\&&\\&$\rightarrow $&\end{tabular}\end{center}\vspace{1in}\caption{A naive approach to $f_3$. The circled car is in the ``troublesome set.'' Notice the does \textit{not} include car 4, since car 4 is in the first column of the parking function.}\label{fig:f3bad}
\end{figure}
Since this unwanted change in diagonal inversions comes from elements in the first diagonal which are larger than the last car in the main diagonal, we are motivated to define the following set.
\begin{definition} Let $r_{d(i)}$ be the last car in the main diagonal of $PF$. Define the \textbf{troublesome set} of $PF$ as:
$$T(PF)=\{r_j>r_{d(i)}:2<j<d(i)\text{ and }g_j=1\}.$$
Thus the elements of $T(PF)$ are exactly those cars that form a secondary diagonal inversion with $r_{d(i)}$ and are not in the first column.\end{definition}
Speaking informally, to define $f_3(PF)$, we will need to give a series of parking functions with the same weight and diagonal word as $PF$ and progressively smaller troublesome sets. When we finally reach a parking function with an empty troublesome set, we will then be able to move the last element in the main diagonal to the front (as we do in $f_1$ and tried to do above) while changing the dinv of our parking function by exactly one, thereby gaining the required $f_3(PF)$. To find this series of parking functions, we will first find it necessary to give a third method for defining a parking function. 
\section{Changes in Dinv}
As for our previous maps, we will construct $f_3$ in such a way that we do not change the diagonal word of our parking function.
In \cite{FirstTree}, Haglund and Loehr describe a recursive operation for forming the parking functions with a given diagonal word $\tau=[\tau_1,\dots,\tau_n]$. We reproduce it here, as we will use the procedure as a starting point in producing our bijection. For notational convenience, set $$\tau'=[\tau_0',\tau_1',\dots,\tau_{n}']=[0,\tau_n,\dots,\tau_1],$$ since the procedure produces parking functions by recursively adding cars, starting with $\tau_n$ and working forward.
Using this notation, we reproduce the procedure here, using $\tau=[4,2,5,1,3]$ (and thus $\tau'=[0,3,1,5,2,4]$) as an example.
\begin{procedure}
\label{proc:treeproc}
\
\begin{enumerate}
\item Form dominoes from $\tau'$ by the following:
\begin{enumerate}
\item Split $\tau'$ at its assents to form $v$.
\begin{itemize}
\item Ex. $v=([0],[3,1],[5,2],[4])$
\end{itemize}
\item Define $t_i$ such that $\tau_i'$ is in $v_{t_i+2}$. Form the list $D=\left(\left[\begin{matrix}\tau_0'\\t_0\end{matrix}\right]\left[\begin{matrix}\tau_1'\\t_1\end{matrix}\right]\dots\left[\begin{matrix}\tau_n'\\t_n\end{matrix}\right]\right)$.
\begin{itemize}
\item Ex. $D=\left(\left[\begin{matrix}0\\-1\end{matrix}\right]\left[\begin{matrix}3\\0\end{matrix}\right]\left[\begin{matrix}1\\0\end{matrix}\right]\left[\begin{matrix}5\\1\end{matrix}\right]\left[\begin{matrix}2\\1\end{matrix}\right]\left[\begin{matrix}4\\2\end{matrix}\right]\right),$
\end{itemize}
\end{enumerate}
\item Begin with $V_0=([D_0])$. (We will remove $D_0$ from our final parking functions. Here it is a convenient way to begin our recursion.)
\begin{itemize}
\item Ex. $V_0=\left(\left[\left[\begin{matrix}0\\-1\end{matrix}\right]\right]\right)$
\end{itemize}
\item Recursively, add $D_i=\left[\begin{matrix}\tau_i'\\t_i\end{matrix}\right]$ to an element in $V_{i-1}$ in all possible ways so that $D_i$ is directly to the right of $\left[\begin{matrix}\tau_j'\\t_j\end{matrix}\right]$ and either:
\begin{enumerate}
\item $t_i=t_j$
\item $t_i=t_j+1$ and $\tau_i'>\tau_j'$.
\end{enumerate}
Form $V_i$ by adding $D_i$ in all possible ways to all the elements in $V_{i-1}$.
\begin{itemize}
\item Ex. We may add $\left[\begin{matrix}2\\1\end{matrix}\right]$ to $\left[\begin{matrix}0&3&5&1\\-1&0&1&0\end{matrix}\right]$ and get $\left[\begin{matrix}0&3&5&1&2\\-1&0&1&0&1\end{matrix}\right]$ and $\left[\begin{matrix}0&3&5&2&1\\-1&0&1&1&0\end{matrix}\right]$. 
\end{itemize}
\item Remove $\left[\begin{matrix}0\\-1\end{matrix}\right]$ from the beginning of every element in $V_n$ to form all the parking functions with diagonal word $\tau$.
\begin{itemize} \item Ex. See Figure \ref{fig:treeex} for the final family of parking functions with diagonal word $[4,2,5,1,3]$.
\end{itemize}
\end{enumerate}
\end{procedure}
\begin{figure}
\begin{tabular}{cc}\multirow{3}{*}{
\begin{sideways}\scalebox{0.7}{
\begin{tabular}{|ccccccccc|}\hline
\multicolumn{9}{|c|}{$\left[\begin{matrix}3\\0\end{matrix}\right]$}\\
\multicolumn{4}{|c}{$\left[\begin{matrix}1&3\\0&0\end{matrix}\right]$}&&\multicolumn{4}{c|}{$\left[\begin{matrix}3&1\\0&0\end{matrix}\right]$}\\
\multicolumn{2}{|c}{$\left[\begin{matrix}1&3&5\\0&0&1\end{matrix}\right]$}&\multicolumn{2}{c}{$\left[\begin{matrix}1&5&3\\0&1&0\end{matrix}\right]$ }&&\multicolumn{2}{c}{$\left[\begin{matrix}3&1&5\\0&0&1\end{matrix}\right]$}&\multicolumn{2}{c|}{$\left[\begin{matrix}3&5&1\\0&1&0\end{matrix}\right]$ }\\
$\left[\begin{matrix}1&3&5&2\\0&0&1&1\end{matrix}\right]$ & $\left[\begin{matrix}1&2&3&5\\0&1&0&1\end{matrix}\right]$ & $\left[\begin{matrix}1&5&2&3\\0&1&1&0\end{matrix}\right]$ & $\left[\begin{matrix}1&2&5&3\\0&1&1&0\end{matrix}\right]$&&$\left[\begin{matrix}3&1&5&2\\0&0&1&1\end{matrix}\right]$ & $\left[\begin{matrix}3&1&2&5\\0&0&1&1\end{matrix}\right]$ & $\left[\begin{matrix}3&5&1&2\\0&1&0&1\end{matrix}\right]$ & $\left[\begin{matrix}3&5&2&1\\0&1&1&0\end{matrix}\right]$ 
\\\hline$\left[\begin{matrix}1&3&5&2&4\\0&0&1&1&2\end{matrix}\right]$ & $\left[\begin{matrix}1&2&4&3&5\\0&1&2&0&1\end{matrix}\right]$ & $\left[\begin{matrix}1&5&2&4&3\\0&1&1&2&0\end{matrix}\right]$ & $\left[\begin{matrix}1&2&4&5&3\\0&1&2&1&0\end{matrix}\right]$&&$\left[\begin{matrix}3&1&5&2&4\\0&0&1&1&2\end{matrix}\right]$ & $\left[\begin{matrix}3&1&2&4&5\\0&0&1&2&1\end{matrix}\right]$ & $\left[\begin{matrix}3&5&1&2&4\\0&1&0&1&2\end{matrix}\right]$ & $\left[\begin{matrix}3&5&2&4&1\\0&1&1&2&0\end{matrix}\right]$ \\\hline
\end{tabular}}
\end{sideways}}
&\multirow{2}{*}{\begin{tabular}{cc}\multicolumn{2}{c}{
\setlength{\unitlength}{.15in}\begin{picture}(6,6)\linethickness{1pt}\multiput(0, 0)(2, 0){4}{\line(0, 1){6}}\multiput(0, 0)(0, 2){4}{\line(1,0){6}}\linethickness{.1pt}\put(0,0){\line(1, 1){6}}\linethickness{2pt}\multiput(0, 0)(6, 0){2}{\line(0, 1){6}}\multiput(0, 0)(0,6){2}{\line(1,0){6}}\put(0,0){\line(0, 1){2}}\put(0,2){\line(0, 1){2}}\put(4,4){\line(0, 1){2}}\put(0,2){\line(1,0){0}}\put(0,4){\line(1,0){4}}\Large{\put(1,1){\makebox(0,0){$\mathbf{{3}}$}}}\Large{\put(1,3){\makebox(0,0){$\mathbf{{5}}$}}}\Large{\put(5,5){\makebox(0,0){$\mathbf{{1}}$}}}\end{picture}}\\\setlength{\unitlength}{.15in}\begin{picture}(8,8)\linethickness{1pt}\multiput(0, 0)(2, 0){5}{\line(0, 1){8}}\multiput(0, 0)(0, 2){5}{\line(1,0){8}}\linethickness{.1pt}\put(0,0){\line(1, 1){8}}\linethickness{2pt}\multiput(0, 0)(8, 0){2}{\line(0, 1){8}}\multiput(0, 0)(0,8){2}{\line(1,0){8}}\put(0,0){\line(0, 1){2}}\put(0,2){\line(0, 1){2}}\put(2,4){\line(0, 1){2}}\put(6,6){\line(0, 1){2}}\put(0,2){\line(1,0){0}}\put(0,4){\line(1,0){2}}\put(2,6){\line(1,0){4}}\Large{\put(1,1){\makebox(0,0){$\mathbf{{3}}$}}}\Large{\put(1,3){\makebox(0,0){$\mathbf{{5}}$}}}\Large{\put(3,5){\makebox(0,0){$\mathbf{{2}}$}}}\Large{\put(7,7){\makebox(0,0){$\mathbf{{1}}$}}}\linethickness{5pt}\curve(4.1,4.55, 6.5,4.5, 7.4,5.6)\end{picture}&\setlength{\unitlength}{.15in}\begin{picture}(8,8)\linethickness{1pt}\multiput(0, 0)(2, 0){5}{\line(0, 1){8}}\multiput(0, 0)(0, 2){5}{\line(1,0){8}}\linethickness{.1pt}\put(0,0){\line(1, 1){8}}\linethickness{2pt}\multiput(0, 0)(8, 0){2}{\line(0, 1){8}}\multiput(0, 0)(0,8){2}{\line(1,0){8}}\put(0,0){\line(0, 1){2}}\put(0,2){\line(0, 1){2}}\put(4,4){\line(0, 1){2}}\put(4,6){\line(0, 1){2}}\put(0,2){\line(1,0){0}}\put(0,4){\line(1,0){4}}\put(4,6){\line(1,0){0}}\Large{\put(1,1){\makebox(0,0){$\mathbf{{3}}$}}}\Large{\put(1,3){\makebox(0,0){$\mathbf{{5}}$}}}\Large{\put(5,5){\makebox(0,0){$\mathbf{{1}}$}}}\Large{\put(5,7){\makebox(0,0){$\mathbf{{2}}$}}}
\end{picture}\\&\end{tabular}}\\&\\&\setlength{\unitlength}{.15in}\begin{picture}(20,42)\put(-5.1,38.6){ \oval(3.5,11)}\put(10,35){\bigcircle{21}}\linethickness{.1pt}\curve(-4,44, 2,44.5, 8.4,45.3)\curve(-4,33, 1.1,29,1.1,29)\put(0.5,20){\parbox{3in}{(left) The parking functions with diagonal word $[4,2,5,1,3]$ are shown here along the right column. The previous rows give the intermediate arrays formed (with the $D_0$ removed from the beginning of each array.)}}
\put(0.5,10.4){\parbox{3in}{(top, right) The top parking function has dinv $1$. If we then add a $2$ in the first diagonal, there are two possibilities: The first resulting parking function, shown here on the right, has dinv $1$ (as did its parent), but the second (the one the left) has dinv $2$. In effect, by choosing to move the $2$ further left, we are creating a dinv between the $2$ and the $1$.}}
\end{picture}\vspace{.35in}
\end{tabular}
\caption{}\label{fig:treeex}
\end{figure}
Notice that although the actual positions to which we add $D_i$ may vary depending on the particular element in $V_{n-1}$, the \textit{number} of positions to which we add $D_i$ is constant across all the elements of $V_{n-1}$. We refer to this number as $w_i$ and can calculate it directly as
\begin{align*}w_i=\#\{\tau'_j:&\tau'_j<\tau'_i \text{ and } t_j+1=t_i \}
+ \#\{\tau'_j:\tau'_j>\tau'_i \text{ and } t_j=t_i \},\end{align*}
\begin{theorem}(Haglund \& Loehr, \cite{FirstTree})
$$\sum_{\operatorname{diagword}(PF)=\tau}t^{\operatorname{area}(PF)}q^{\operatorname{dinv}(PF)}=t^{\operatorname{maj}(\tau)}\prod_{i=1}^n [w_i]_q$$
where $[n]_q=1+q+\dots+q^{n-1}$.
\end{theorem}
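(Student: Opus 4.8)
The plan is to read off the two factors $t^{\operatorname{maj}(\tau)}$ and $\prod_i[w_i]_q$ separately from Procedure~\ref{proc:treeproc}. First the $t$-power. Every parking function with diagonal word $\tau$ has the same area by Lemma~\ref{lem:easyprop}(1), so it suffices to identify the common value with $\operatorname{maj}(\tau)$. Splitting $\tau$ at its descents records the contents of the diagonals, starting from the one farthest from the main diagonal; if the resulting block sizes are $s_1,\dots,s_{m+1}$, then diagonal $m+1-j$ holds a block of size $s_j$, so the area equals $\sum_{j}(m+1-j)s_j$. On the other hand the descents of $\tau$ occur at positions $s_1,\ s_1+s_2,\ \dots,\ s_1+\cdots+s_m$, so $\operatorname{maj}(\tau)=\sum_{j=1}^{m}(s_1+\cdots+s_j)=\sum_{j}(m+1-j)s_j$ — the same sum. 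Hence $t^{\operatorname{area}(PF)}=t^{\operatorname{maj}(\tau)}$ for every $PF$ in the family, and it remains to prove $\sum_{\operatorname{diagword}(PF)=\tau}q^{\operatorname{dinv}(PF)}=\prod_{i=1}^{n}[w_i]_q$.

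For this I would run the dinv bookkeeping through the recursion. Procedure~\ref{proc:treeproc} produces each parking function with diagonal word $\tau$ exactly once, as a sequence of insertions of the dominoes $D_1,\dots,D_n$, with exactly $w_i$ admissible slots at stage $i$. Two facts then suffice. (i) Inserting $D_i$ does not disturb the left-to-right order, cars, or diagonals of $D_0,\dots,D_{i-1}$, so the number of dinv pairs among those dominoes is unchanged (dinv of a pair depends only on the two dominoes' cars, diagonals and relative order). (ii) The number of \emph{new} dinv pairs created by inserting $D_i$ depends only on which admissible slot is used, and as the slot ranges over its $w_i$ choices this number ranges over $\{0,1,\dots,w_i-1\}$. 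Granting (i) and (ii), each parking function in the family is recorded by an independent tuple $(k_1,\dots,k_n)$ with $k_i\in\{0,\dots,w_i-1\}$, its dinv is $\sum_i k_i$, and therefore $\sum q^{\operatorname{dinv}(PF)}=\prod_i(1+q+\cdots+q^{w_i-1})=\prod_i[w_i]_q$, which combined with the first paragraph gives the theorem.

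The entire content lies in claim (ii), and this is where I expect the real obstacle. Writing $c=\tau'_i$ and $d=t_i$ for the car and diagonal of the domino being inserted, the new pairs it forms are with previously placed dominoes either in diagonal $d$ (primary attacking) or in diagonals $d\pm1$ (secondary attacking), and whether each such pair is an inversion is governed by the sign of the car-difference together with whether that domino ends up to the left or right of the insertion point. One must (a) enumerate the admissible slots — directly right of a domino in diagonal $d$, or directly right of a domino in diagonal $d-1$ with smaller car — verify each keeps the array a genuine parking function, and confirm that their number is exactly the stated $w_i$; and (b) order the slots from one end so that sliding the inserted domino past one further slot changes the count of new dinv pairs by exactly $+1$, by matching the primary- and secondary-attacking inversions that are gained against those that are lost at each step. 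Step (b) — establishing this monotone, increment-by-one behaviour simultaneously with the slot count equalling $w_i$ — is the crux; it demands a careful case analysis of how primary and secondary inversions trade off as the new domino moves, after which the theorem follows from the independence argument above.
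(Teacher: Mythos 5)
Your overall structure matches the paper's proof: the $t$-power comes from area $=\operatorname{maj}(\tau)$ (your block-by-block computation is fine, and if anything more explicit than the paper's one-line remark that the $t_i$ sum to $\operatorname{maj}(\tau)$), and the $q$-product is to come from the $w_i$ admissible insertion slots at each stage of Procedure~\ref{proc:treeproc}, with your claim (i) being easy bookkeeping. But the proof is not complete: claim (ii), which you yourself identify as carrying the entire content, is only announced, and the route you sketch for it --- ordering the slots and ``matching the primary- and secondary-attacking inversions that are gained against those that are lost'' as the new domino slides --- anticipates a case analysis that is never needed and misses the actual mechanism: nothing is ever lost.

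The missing observation, which is essentially the whole of the paper's argument, is that the set of previously placed dominoes that can form a diagonal inversion with $D_i$ coincides exactly with the set of admissible anchors. Indeed, a previously placed $D_j$ forms a dinv with $D_i$ precisely when either $t_j=t_i$, $\tau_j'>\tau_i'$ and $D_i$ lies to its left, or $t_j=t_i-1$, $\tau_j'<\tau_i'$ and $D_i$ lies to its left; and these $D_j$ are precisely the dominoes one is permitted to place $D_i$ directly to the right of in step (3) (the inequality $\tau_j'>\tau_i'$ in the same diagonal is automatic, since each diagonal's cars are inserted in decreasing order, and no previously placed domino sits in a higher diagonal). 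Hence if the anchors are $D_{k_1},\dots,D_{k_{w_i}}$ in left-to-right order, placing $D_i$ directly right of $D_{k_m}$ creates exactly the $w_i-m$ new dinv pairs with the anchors still to its right and disturbs no old pairs, so as $m$ runs over $1,\dots,w_i$ the increment runs over $0,1,\dots,w_i-1$ --- no trade-off between primary and secondary inversions arises. (Your point (a), that each slot yields a genuine parking function and that the slot count is $w_i$, is part of the correctness of the Haglund--Loehr construction, which the paper cites and takes as given; $w_i$ is by definition the number of slots, constant over $V_{i-1}$ because the anchors are determined by $\tau'$ alone.) Until this identification, or an equivalent argument, is supplied, the $q$-side of your proof rests on an unproven assertion.
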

\begin{proof}
The proof of the theorem comes directly from the above construction. First, notice that if $\operatorname{diagword}(PF)=\tau$, $$\operatorname{area}(PF)=\sum_{i>0} t_i=\operatorname{maj}(\tau),$$ since ascents (besides the first) in $\tau'$ correspond to descents in $\tau$. Next, consider the cars in $PF$ that are placed by Procedure \ref{proc:treeproc} in $PF$ before $\tau_i'$ and form a diagonal inversion with $\tau_i'$. These are exactly the sets \begin{align*}\{\tau_j':\tau_j'>\tau_i', t_i=t_j\text{, and }
\tau_i' \text{ to the left of } \tau_j' \text{ in }PF\}\text{ and }\\\{\tau_j':\tau_j'<\tau_i', t_i=t_j+1 \text{, and }\tau_i' \text{ to the left of } \tau_j' \text{ in }PF\}.\end{align*}
Notice that $$\left\{\left[\begin{matrix}\tau_j'\\t_j\end{matrix}\right]:\tau_j'>\tau_i', t_i=t_j\right\}\text{ and }\left\{\left[\begin{matrix}\tau_j'\\t_j\end{matrix}\right]:\tau_j'<\tau_i', t_i=t_j+1\right\}$$ are exactly the elements we may place $D_i$ beside in step (3) of Procedure \ref{proc:treeproc}. Consider adding $D_i$ in all possible ways into $\pi\in V_{i-1}$. By definition, $D_i$ can be placed in $\pi$ in $w_i$ distinct places, say to the right of dominoes $D_{k_1}, \cdots, D_{k_{w_i}}$ listed in the order they appear in $\pi$. Then the diagonal inversions formed when we place $D_i$ in $\pi$ are formed between $D_i$ and the subset of $D_{k_1}, \cdots, D_{k_{w_i}}$ occurring to the right of $D_i$. Thus placing $D_i$ directly to the right of $ D_{k_{w_i}}$ (such that we see $D_{k_1}, \cdots, D_{k_{w_i}},D_i$ occurring in this relative order in the result) will not create any new diagonal inversions in $\pi$, but each time we choose to place $D_i$ further to the right, we create a new diagonal inversion, thus giving an increase in dinv of $0,1,2,\cdots,w_i-1$ as required. (Again see Figure \ref{fig:treeex}.)
\end{proof}
As it is traditionally defined, a diagonal inversion in a parking function is a set of two cars in $PF$ with certain properties. In the following, we will assign diagonal inversions to a particular car in a manner suggested by this construction (in particular the element being added in Procedure \ref{proc:treeproc} when a diagonal inversion is created). Moreover, if we choose to assign dinv in such a manner, knowing the diagonal word of a parking function and the dinv of each element identifies a unique parking function that can be constructed using the previous procedure.
\subsection{Our notation}
Using the above construction of the parking functions with a given diagonal word, we consider a set of maps which leave the diagonal word of a parking function unchanged. 
\begin{definition} Let the \textbf{dinv set} of $r_i$ in $PF$ be $$\{j>i:g_j=g_i-1\text{ \& } r_i>r_j\}\cup \{j>i:g_j=g_i \text{ \& }r_j>r_i \}.$$ Say that the \textbf{diagonal inversions} of $r_i$ in $PF$ ($\operatorname{dinv}(r_i,PF)$) is the size of the dinv set of $r_i$. Note that the dinv set then gives the index of the dominoes to the right of $r_i$ that we could have placed $\left[\begin{matrix}r_i\\d_i\end{matrix}\right]$ directly to the left of in Procedure \ref{proc:treeproc}.
\end{definition}
\begin{definition} Let the \textbf{degree set} of $r_i$ in $PF$ be $$\{j:g_j=g_i-1\text{ \& } r_i>r_j\}\cup \{j:g_j=g_i \text{ \& }r_j>r_i \}.$$ (Include $0$ in the degree set of elements in the main diagonal.) Thus the degree set gives the index of \textit{all} dominoes that we could have placed $\left[\begin{matrix}r_i\\d_i\end{matrix}\right]$ directly to the left of in Procedure \ref{proc:treeproc}. Say that $r_i$ is of \textbf{full degree} if $\operatorname{dinv}(r_i,PF)$ is one less than the size of the degree set of $r_i$. This then corresponds to placing $r_i$ as far to the left as possible when we construct $PF$.
\end{definition}
\begin{remark}Notice that if $\tau=\operatorname{diagword}(PF)$ and $\tau_j'=r_i$, then $w_j=|\operatorname{degset}(r_i,PF)|$.\end{remark}
In the following, we will define maps within the parking functions with a certain diagonal word by ``changing the dinv" of $r_i$ in $PF$ by $j$. The above construction gives us that such a parking function exists and is unique, as long as $0\leq \operatorname{dinv}(r_i,PF)+j < |\operatorname{degset}(r_i,PF)|$.
\begin{definition}
Formally, let $\operatorname{dinvinc}(r_i,PF)$ ($\operatorname{dinvdec}(r_i,PF)$) give the unique parking function $PF'$ with the following properties:
\begin{enumerate}
\item $\operatorname{diag}(PF')=\operatorname{diag}(PF)$ 
\item $\operatorname{dinv}(r_i,PF')=\operatorname{dinv}(r_i,PF)+1$ (or $\operatorname{dinv}(r_i,PF')=\operatorname{dinv}(r_i,PF)-1$ respectively)
\item For all $j\neq i$, $\operatorname{dinv}(r_j,PF)=\operatorname{dinv}(r_j,PF')$
\end{enumerate}
\end{definition}
Using this new notation, as a first exercise we may equivalently define $f_1$ and $f_2$ as specific dinv decreases. In particular, (assuming we begin with $PF$ in the proper domain) $$f_1(PF)=\begin{cases}\operatorname{dinvdec}(r_2,PF)& r_2>r_3\\
\operatorname{dinvdec}(r_1,PF)& r_2<r_3\end{cases}
$$ and
$$f_2(PF)=\operatorname{dinvdec}(r_{n-1},PF).$$
Since frequently below we will simultaneously increase the dinv of one label and decrease the dinv of another, we shorten the notation by saying that:
$$\operatorname{dinvchange}(r_i,r_j,PF)=\operatorname{dinvinc}(r_i,\operatorname{dinvdec}(r_j,PF))$$
\begin{remark}To recall which dinv we increase and which we decrease, notice that the relative position of $r_i$ and $r_j$ reflect the direction they will be moved in our parking function by such a change, since increasing the dinv of $r_i$ will cause $r_i$ to generally move towards the left, while $r_j$ will similarly move to the right. Notice also that $\operatorname{dinvinc}$ and $\operatorname{dinvdec}$ are clearly commutative. 
\end{remark}
With these conventions, we will define $f_3$ as a recursive series of specific dinv changes (which in fact hold the total dinv of the parking function fixed) which each decrease the size of the troublesome set, followed by a final dinv decrease when we have a parking function with empty troublesome set.
\begin{notabene}
Before we begin, we should notice that a dinv change, since it is defined as a change to some (and often not the last) recursive step we use to create a parking function by means of Procedure \ref{proc:treeproc}, can cause dramatic changes in our parking functions. While for a specific choice of $PF$, $\operatorname{dinvchange}(r_i,r_j,PF)$ is a permutation of the dominoes of $PF$, the permutation depends not only on the choice of $r_i$ and $r_j$ but also depends very strongly on $PF$, unlike $f_1$ or $f_2$. \end{notabene}
\begin{example}See Figure \ref{fig:earlyex}. Notice that even if we begin with almost identical parking functions, our resulting path (and even composition) may differ fairly dramatically. \end{example}
\begin{figure}\label{fig:earlyex}
\begin{center}
\begin{tabular}{ccc}
\multirow{3}{*}{\setlength{\unitlength}{.1in}{\begin{picture}(16,16)\linethickness{1pt}\multiput(0, 0)(2, 0){9}{\line(0, 1){16}}\multiput(0, 0)(0, 2){9}{\line(1,0){16}}\linethickness{.1pt}\put(0,0){\line(1, 1){16}}\linethickness{2pt}\multiput(0, 0)(16, 0){2}{\line(0, 1){16}}\multiput(0, 0)(0,16){2}{\line(1,0){16}}\put(0,0){\line(0, 1){2}}\put(0,2){\line(0, 1){2}}\put(2,4){\line(0, 1){2}}\put(2,6){\line(0, 1){2}}\put(4,8){\line(0, 1){2}}\put(4,10){\line(0, 1){2}}\put(10,12){\line(0, 1){2}}\put(14,14){\line(0, 1){2}}\put(0,2){\line(1,0){0}}\put(0,4){\line(1,0){2}}\put(2,6){\line(1,0){0}}\put(2,8){\line(1,0){2}}\put(4,10){\line(1,0){0}}\put(4,12){\line(1,0){6}}\put(10,14){\line(1,0){4}}\Large{\put(1,1){\makebox(0,0){$\mathbf{{3}}$}}}\Large{\put(1,3){\makebox(0,0){$\mathbf{{6}}$}}}\Large{\put(3,5){\makebox(0,0){$\mathbf{{4}}$}}}\Large{\put(3,7){\makebox(0,0){$\mathbf{{5}}$}}}\Large{\put(5,9){\makebox(0,0){$\mathbf{{7}}$}}}\Large{\put(5,11){\makebox(0,0){$\mathbf{{8}}$}}}\Large{\put(11,13){\makebox(0,0){$\mathbf{{1}}$}}}\Large{\put(15,15){\makebox(0,0){$\mathbf{{2}}$}}}\put(1,3){\circle{2}}\put(3,5){\circle{2}}\end{picture}}}&&
\multirow{3}{*}{\setlength{\unitlength}{.1in}\begin{picture}(16,16)\linethickness{1pt}\multiput(0, 0)(2, 0){9}{\line(0, 1){16}}\multiput(0, 0)(0, 2){9}{\line(1,0){16}}\linethickness{.1pt}\put(0,0){\line(1, 1){16}}\linethickness{2pt}\multiput(0, 0)(16, 0){2}{\line(0, 1){16}}\multiput(0, 0)(0,16){2}{\line(1,0){16}}\put(0,0){\line(0, 1){2}}\put(0,2){\line(0, 1){2}}\put(0,4){\line(0, 1){2}}\put(6,6){\line(0, 1){2}}\put(6,8){\line(0, 1){2}}\put(6,10){\line(0, 1){2}}\put(6,12){\line(0, 1){2}}\put(12,14){\line(0, 1){2}}\put(0,2){\line(1,0){0}}\put(0,4){\line(1,0){0}}\put(0,6){\line(1,0){6}}\put(6,8){\line(1,0){0}}\put(6,10){\line(1,0){0}}\put(6,12){\line(1,0){0}}\put(6,14){\line(1,0){6}}\Large{\put(1,1){\makebox(0,0){$\mathbf{{3}}$}}}\Large{\put(1,3){\makebox(0,0){$\mathbf{{4}}$}}}\Large{\put(1,5){\makebox(0,0){$\mathbf{{5}}$}}}\Large{\put(7,7){\makebox(0,0){$\mathbf{{2}}$}}}\Large{\put(7,9){\makebox(0,0){$\mathbf{{6}}$}}}\Large{\put(7,11){\makebox(0,0){$\mathbf{{7}}$}}}\Large{\put(7,13){\makebox(0,0){$\mathbf{{8}}$}}}\Large{\put(13,15){\makebox(0,0){$\mathbf{{1}}$}}}\put(7,9){\circle{2}}\put(1,3){\circle{2}}\end{picture}}\\&&\\&&\\&$\rightarrow$&\vspace{.9in}\end{tabular}\end{center}
\begin{center}
\begin{tabular}{ccc}
\multirow{3}{*}{
\setlength{\unitlength}{.1in}\begin{picture}(16,16)\linethickness{1pt}\multiput(0, 0)(2, 0){9}{\line(0, 1){16}}\multiput(0, 0)(0, 2){9}{\line(1,0){16}}\linethickness{.1pt}\put(0,0){\line(1, 1){16}}\linethickness{2pt}\multiput(0, 0)(16, 0){2}{\line(0, 1){16}}\multiput(0, 0)(0,16){2}{\line(1,0){16}}\put(0,0){\line(0, 1){2}}\put(0,2){\line(0, 1){2}}\put(2,4){\line(0, 1){2}}\put(2,6){\line(0, 1){2}}\put(4,8){\line(0, 1){2}}\put(4,10){\line(0, 1){2}}\put(10,12){\line(0, 1){2}}\put(14,14){\line(0, 1){2}}\put(0,2){\line(1,0){0}}\put(0,4){\line(1,0){2}}\put(2,6){\line(1,0){0}}\put(2,8){\line(1,0){2}}\put(4,10){\line(1,0){0}}\put(4,12){\line(1,0){6}}\put(10,14){\line(1,0){4}}\Large{\put(1,1){\makebox(0,0){$\mathbf{{3}}$}}}\Large{\put(1,3){\makebox(0,0){$\mathbf{{6}}$}}}\Large{\put(3,5){\makebox(0,0){$\mathbf{{4}}$}}}\Large{\put(3,7){\makebox(0,0){$\mathbf{{5}}$}}}\Large{\put(5,9){\makebox(0,0){$\mathbf{{1}}$}}}\Large{\put(5,11){\makebox(0,0){$\mathbf{{8}}$}}}\Large{\put(11,13){\makebox(0,0){$\mathbf{{7}}$}}}\Large{\put(15,15){\makebox(0,0){$\mathbf{{2}}$}}}\put(3,5){\circle{2}}\put(1,3){\circle{2}}\end{picture}}&&
\multirow{3}{*}{\setlength{\unitlength}{.1in}\begin{picture}(16,16)\linethickness{1pt}\multiput(0, 0)(2, 0){9}{\line(0, 1){16}}\multiput(0, 0)(0, 2){9}{\line(1,0){16}}\linethickness{.1pt}\put(0,0){\line(1, 1){16}}\linethickness{2pt}\multiput(0, 0)(16, 0){2}{\line(0, 1){16}}\multiput(0, 0)(0,16){2}{\line(1,0){16}}\put(0,0){\line(0, 1){2}}\put(0,2){\line(0, 1){2}}\put(0,4){\line(0, 1){2}}\put(2,6){\line(0, 1){2}}\put(2,8){\line(0, 1){2}}\put(10,10){\line(0, 1){2}}\put(10,12){\line(0, 1){2}}\put(12,14){\line(0, 1){2}}\put(0,2){\line(1,0){0}}\put(0,4){\line(1,0){0}}\put(0,6){\line(1,0){2}}\put(2,8){\line(1,0){0}}\put(2,10){\line(1,0){8}}\put(10,12){\line(1,0){0}}\put(10,14){\line(1,0){2}}\Large{\put(1,1){\makebox(0,0){$\mathbf{{3}}$}}}\Large{\put(1,3){\makebox(0,0){$\mathbf{{4}}$}}}\Large{\put(1,5){\makebox(0,0){$\mathbf{{5}}$}}}\Large{\put(3,7){\makebox(0,0){$\mathbf{{1}}$}}}\Large{\put(3,9){\makebox(0,0){$\mathbf{{8}}$}}}\Large{\put(11,11){\makebox(0,0){$\mathbf{{2}}$}}}\Large{\put(11,13){\makebox(0,0){$\mathbf{{6}}$}}}\Large{\put(13,15){\makebox(0,0){$\mathbf{{7}}$}}}\put(1,3){\circle{2}}\put(11,13){\circle{2}}\end{picture}}\\&&\\&&\\&$\rightarrow$&\vspace{.9in}\end{tabular}\end{center}
\caption{For two similar $PF$ given on the left, $PF'=\operatorname{dinvchange}(4,6,PF)$ is shown on the right.}
\end{figure}
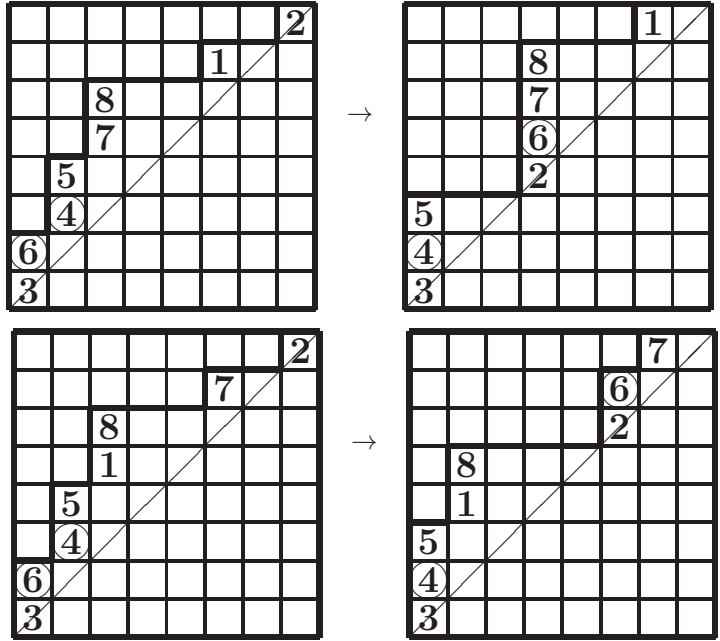
Where possible, we try to give descriptions of some of the resulting changes in our parking functions, but we suggest the reader be aware that by necessity this is a simplification of the full changes that are most easily described in full rigor using changes in the dinv. In particular, when defining $f_3$ we include a number of diagrams to give the reader some idea of how a specific step changes a parking function. For a dinv change as depicted in Figure \ref{fig:earlyex}, for example, we would use the diagram
\begin{center}
\begin{tabular}{rcl}\multirow{2}{*}{\setlength{\unitlength}{.13in}\begin{picture}(10,4)\linethickness{2pt}\multiput(0, 0)(2, 0){6}{\line(0, 1){4}}\multiput(0, 0)(0, 2){3}{\line(1, 0){10}}{\put(1,3){\makebox(0,0){$\mathbf{{r}}$}}}{\put(3,3){\makebox(0,0){$\mathbf{{r\prime}}$}}}{\put(5,3){\makebox(0,0){$\mathbf{{r\prime\prime}}$}}}{\put(7,3){\makebox(0,0){$\mathbf{{\dots}}$}}}{\put(9,3){\makebox(0,0){$\mathbf{{d}}$}}}{\put(1,1){\makebox(0,0){$\mathbf{{0}}$}}}{\put(3,1){\makebox(0,0){$\mathbf{{1}}$}}}{\put(5,1){\makebox(0,0){$\mathbf{{1}}$}}}{\put(7,1){\makebox(0,0){$\mathbf{{1+}}$}}}{\put(9,1){\makebox(0,0){$\mathbf{{0}}$}}}\end{picture}}&&\multirow{2}{*}{\setlength{\unitlength}{.15in}\begin{picture}(12,4)\linethickness{2pt}\multiput(0, 0)(2, 0){7}{\line(0, 1){4}}\multiput(0, 0)(0, 2){3}{\line(1, 0){12}}{\put(1,3){\makebox(0,0){$\mathbf{{r}}$}}}{\put(3,3){\makebox(0,0){$\mathbf{{r\prime\prime}}$}}}{\put(5,3){\makebox(0,0){$\mathbf{{\dots}}$}}}{\put(7,3){\makebox(0,0){$\mathbf{{d}}$}}}{\put(9,3){\makebox(0,0){$\mathbf{{r\prime}}$}}}{\put(11,3){\makebox(0,0){$\mathbf{{\dots}}$}}}{\put(1,1){\makebox(0,0){$\mathbf{{0}}$}}}{\put(3,1){\makebox(0,0){$\mathbf{{1}}$}}}{\put(5,1){\makebox(0,0){$\mathbf{{2+}}$}}}{\put(7,1){\makebox(0,0){$\mathbf{{0}}$}}}{\put(9,1){\makebox(0,0){$\mathbf{{1}}$}}}{\put(11,1){\makebox(0,0){$\mathbf{{1+}}$}}}\end{picture}}\\&$\rightarrow $&\end{tabular}\end{center}
\vspace{.6cm}
\noindent to show specifically where $3$, $6$, $4$, and $2$ ($r$, $r\prime$, $r\prime\prime$, and $d$ respectively) move under $\operatorname{dinvchange}(4,6,PF)$. We use dominoes of the form $\left[\begin{matrix}\dots\\g+\end{matrix}\right]$and to indicate that, depending on our choice of $PF$, there may (or may not) be additional dominoes remaining in this space, and that they are on diagonal $g$ or higher.
\section{The Remaining Map Defined}
It remains to define $f_3$, a bijection between $\mathcal{C}^\tau_{n-1, 1}\backslash T$ and $\mathcal{C}^\tau_{1, n-1}\backslash S$. We construct a family of parking functions $$PF^i=\left[\begin{matrix}r_1^i &\dots&r_{n-2}^i &r_{n-1}^i &r_{n}^i\\g_1^i&\dots&g_{n-2}^i& g_{n-1}^i&g_n^i\end{matrix}\right]$$ with $PF^1=PF$ and the final result being $f_3(PF)$. Roughly speaking, we would like to move the second element on the main diagonal of $PF$ left until it is in the second spot in the resulting parking function, so that we have a parking function with composition $[1,n-1]$. Ideally, we would like the result to have an increase of exactly one dinv. 
We follow what is essentially a two step process. First, we move elements on the first diagonal and larger than $r_n$ (cars in the troublesome set) to the right of $r_n$ using a series of steps chosen because they respect the dinv of the parking function. Second, we move $r_n$ to either the first or second place in the parking function, moving it to the former only if the latter is not a parking function. It is in this last step that we lose the required one dinv.
\subsection{Notation}
We specifically identify certain elements in a parking function repeatedly in our following procedure.
For notational convenience:
\begin{itemize}
\item Let $d(i)$ be the position of the second label in the main diagonal of $PF^i$. 
\item Refer to the elements $r_j^i$ such that $r_j^i>r_{d(i)}^i$ as the ``big'' elements in $PF_i$ and refer to the elements $r_j^i$ such that $r_j^i<r_{d(i)}^i$ as the ``small'' elements in $PF_i$. 
\item Let $b(i)$ give the index of the last big label in the first part and the first diagonal.
\item Let $b'(i)$ give the index of the second to last big label in the first part and the first diagonal.
\item Let $a(i)$ give the index of the first label in the first part and the first diagonal after $b(i)$. (Note that by definition of $b(i)$, $a(i)$ will by necessity be the index of a small element.)
\item Let $m(i)$ give the index of the last label in the first part and diagonal of $PF^i$.
\item Let $p(i)$ give the index of the last label in the first part and second diagonal of $PF^i$ that is larger than $r_{d(i)+1}$ and occurs to the right of $r_{m(i)}$.
\end{itemize}
Notice that none of these previously defined cars necessarily exist for a given parking function. See Figure \ref{fig:note} for an example.
\begin{figure}\begin{center}\begin{tabular}{cc}$\left[\begin{matrix}1&8&2&9&7&3&10&6&4&5\\0&1&1&2&1&1&2&2&0&1\end{matrix}\right]$&
\begin{tabular}{|r||c|c|c|c|c|c|}
\hline
&d&b&b'&a&m&p\\
\hline
\hline
Index&9&5&2&6&6&8\\ 
\hline
Car&4&7&8&3&3&6\\
\hline
\end{tabular}\end{tabular}\end{center}
\caption{The table on the right gives the value of each letter, as defined by our new notation (say $\alpha(i)$), then the corresponding car in the parking function ($r_{\alpha(i)}$).}\label{fig:note}\end{figure}
We should make one last comment before giving the recursive procedure that defines $f_3$. Implicitly, in the following construction, we will assume that $g_{b(i)+1}^i\neq 0$ (i.e.\ that there is some car between $r_{b(i)}^i$ and $r_{d(i)}^i$) a fact which will be extremely important in proving the bijectively of $f$. We will refer to this as the \textbf{recursive condition} and will soon show that not only all objects in the domain of $f_3$ but also any intermediate parking functions formed in the following procedure will satisfy this condition.
\subsection{Constructing the image of $PF$ under $f_3$}
\begin{procedure}\label{k=1} Let $PF^1$ be a parking function in $\mathcal{A}^\tau_{n-1,1}\backslash T$ and repeat the following to produce $f_3(PF^1)$. 
\begin{enumerate}
\item If $PF^i$ has no big elements in the first part and first diagonal (besides possibly a big element in its first column), then construct
$$f_3(PF^1)=\begin{cases} \operatorname{dinvdec}(r_2^i,PF^i) & r_2^i>r_{d(i)}^i\\
\operatorname{dinvdec}(r_1^i,PF^i) & r_2^i<r_{d(i)}^i
\end{cases}.$$
When $r_2^i>r_{d(i)}^i$, \begin{tabular}{rcl}\multirow{2}{*}{\setlength{\unitlength}{.13in}\begin{picture}(10,4)\linethickness{2pt}\multiput(0, 0)(2, 0){6}{\line(0, 1){4}}\multiput(0, 0)(0, 2){3}{\line(1, 0){10}}{\put(1,3){\makebox(0,0){$\mathbf{{r}}$}}}{\put(3,3){\makebox(0,0){$\mathbf{{r'}}$}}}{\put(5,3){\makebox(0,0){$\mathbf{{\dots}}$}}}{\put(7,3){\makebox(0,0){$\mathbf{{d}}$}}}{\put(9,3){\makebox(0,0){$\mathbf{{\dots}}$}}}{\put(1,1){\makebox(0,0){$\mathbf{{0}}$}}}{\put(3,1){\makebox(0,0){$\mathbf{{1}}$}}}{\put(5,1){\makebox(0,0){$\mathbf{{1+}}$}}}{\put(7,1){\makebox(0,0){$\mathbf{{0}}$}}}{\put(9,1){\makebox(0,0){$\mathbf{{1+}}$}}}\linethickness{1pt}
\curve(7,4,4.5,4.8,2,4)\thicklines\put(2.6,4.5){\vector(-3,-2){.8}}\end{picture}}&&\multirow{2}{*}{\setlength{\unitlength}{.13in}\begin{picture}(8,4)\linethickness{2pt}\multiput(0, 0)(2, 0){5}{\line(0, 1){4}}\multiput(0, 0)(0, 2){3}{\line(1, 0){8}}{\put(1,3){\makebox(0,0){$\mathbf{{r}}$}}}{\put(3,3){\makebox(0,0){$\mathbf{{d}}$}}}{\put(5,3){\makebox(0,0){$\mathbf{{r'}}$}}}{\put(7,3){\makebox(0,0){$\mathbf{{\dots}}$}}}{\put(1,1){\makebox(0,0){$\mathbf{{0}}$}}}{\put(3,1){\makebox(0,0){$\mathbf{{0}}$}}}{\put(5,1){\makebox(0,0){$\mathbf{{1}}$}}}{\put(7,1){\makebox(0,0){$\mathbf{{1+}}$}}}\end{picture}}\\&$\rightarrow $&\end{tabular}\vspace{1cm}
\\When $r_2^i<r_{d(i)}^i$,
\begin{tabular}{rcl}\multirow{2}{*}{\setlength{\unitlength}{.13in}\begin{picture}(10,4)\linethickness{2pt}\multiput(0, 0)(2, 0){6}{\line(0, 1){4}}\multiput(0, 0)(0, 2){3}{\line(1, 0){10}}{\put(1,3){\makebox(0,0){$\mathbf{{r}}$}}}{\put(3,3){\makebox(0,0){$\mathbf{{r'}}$}}}{\put(5,3){\makebox(0,0){$\mathbf{{\dots}}$}}}{\put(7,3){\makebox(0,0){$\mathbf{{d}}$}}}{\put(9,3){\makebox(0,0){$\mathbf{{\dots}}$}}}{\put(1,1){\makebox(0,0){$\mathbf{{0}}$}}}{\put(3,1){\makebox(0,0){$\mathbf{{1}}$}}}{\put(5,1){\makebox(0,0){$\mathbf{{1+}}$}}}{\put(7,1){\makebox(0,0){$\mathbf{{0}}$}}}{\put(9,1){\makebox(0,0){$\mathbf{{1+}}$}}}\linethickness{1pt}
\curve(7,4,3.5,4.8,0,4)\thicklines\put(0.6,4.5){\vector(-3,-2){.8}}\end{picture}}&&\multirow{2}{*}{\setlength{\unitlength}{.13in}\begin{picture}(8,4)\linethickness{2pt}\multiput(0, 0)(2, 0){5}{\line(0, 1){4}}\multiput(0, 0)(0, 2){3}{\line(1, 0){8}}{\put(1,3){\makebox(0,0){$\mathbf{{d}}$}}}{\put(3,3){\makebox(0,0){$\mathbf{{r}}$}}}{\put(5,3){\makebox(0,0){$\mathbf{{r'}}$}}}{\put(7,3){\makebox(0,0){$\mathbf{{\dots}}$}}}{\put(1,1){\makebox(0,0){$\mathbf{{0}}$}}}{\put(3,1){\makebox(0,0){$\mathbf{{0}}$}}}{\put(5,1){\makebox(0,0){$\mathbf{{1}}$}}}{\put(7,1){\makebox(0,0){$\mathbf{{1+}}$}}}\end{picture}}\\&$\rightarrow $&\end{tabular}\vspace{1cm}
\item If $g_{b(i)+1}^i=1$, let $$PF^{i+1}=\operatorname{dinvchange}(r_{a(i)}^i,r_{b(i)}^i,PF^i).$$\begin{center}
\begin{tabular}{rcl}\multirow{2}{*}{\setlength{\unitlength}{.13in}\begin{picture}(14,4)\linethickness{2pt}\multiput(0, 0)(2, 0){8}{\line(0, 1){4}}\multiput(0, 0)(0, 2){3}{\line(1, 0){14}}{\put(1,3){\makebox(0,0){$\mathbf{{r}}$}}}{\put(3,3){\makebox(0,0){$\mathbf{{\dots}}$}}}{\put(5,3){\makebox(0,0){$\mathbf{{b}}$}}}{\put(7,3){\makebox(0,0){$\mathbf{{a}}$}}}{\put(9,3){\makebox(0,0){$\mathbf{{\dots}}$}}}{\put(11,3){\makebox(0,0){$\mathbf{{d}}$}}}{\put(13,3){\makebox(0,0){$\mathbf{{\dots}}$}}}{\put(1,1){\makebox(0,0){$\mathbf{{0}}$}}}{\put(3,1){\makebox(0,0){$\mathbf{{1+}}$}}}{\put(5,1){\makebox(0,0){$\mathbf{{1}}$}}}{\put(7,1){\makebox(0,0){$\mathbf{{1}}$}}}{\put(9,1){\makebox(0,0){$\mathbf{{1+}}$}}}{\put(11,1){\makebox(0,0){$\mathbf{{0}}$}}}{\put(13,1){\makebox(0,0){$\mathbf{{1+}}$}}}\linethickness{1pt}
\curve(5,4,8.5,5,12,4)\thicklines\put(11.2,4.4){\vector(3,-1){.8}}\linethickness{1pt}
\curve(7,4,5.5,4.8,4,4)\thicklines\put(4.6,4.5){\vector(-3,-2){.8}}\end{picture}}&&\multirow{2}{*}{\setlength{\unitlength}{.13in}\begin{picture}(14,4)\linethickness{2pt}\multiput(0, 0)(2, 0){8}{\line(0, 1){4}}\multiput(0, 0)(0, 2){3}{\line(1, 0){14}}{\put(1,3){\makebox(0,0){$\mathbf{{r}}$}}}{\put(3,3){\makebox(0,0){$\mathbf{{\dots}}$}}}{\put(5,3){\makebox(0,0){$\mathbf{{a}}$}}}{\put(7,3){\makebox(0,0){$\mathbf{{\dots}}$}}}{\put(9,3){\makebox(0,0){$\mathbf{{d}}$}}}{\put(11,3){\makebox(0,0){$\mathbf{{b}}$}}}{\put(13,3){\makebox(0,0){$\mathbf{{\dots}}$}}}{\put(1,1){\makebox(0,0){$\mathbf{{0}}$}}}{\put(3,1){\makebox(0,0){$\mathbf{{1+}}$}}}{\put(5,1){\makebox(0,0){$\mathbf{{1}}$}}}{\put(7,1){\makebox(0,0){$\mathbf{{2+}}$}}}{\put(9,1){\makebox(0,0){$\mathbf{{0}}$}}}{\put(11,1){\makebox(0,0){$\mathbf{{1}}$}}}{\put(13,1){\makebox(0,0){$\mathbf{{1+}}$}}}\end{picture}}\\&$\rightarrow $&\end{tabular}
\end{center}\vspace{1cm}
\item If $g_{b(i)+1}^i=2$ and $g_{b(i)-1}^i>1$, let $$PF^{i+1}=\operatorname{dinvchange}(r_{b(i)+1}^i,r_{b(i)}^i,PF^i).$$
\begin{center}\begin{tabular}{rcl}\multirow{2}{*}{\setlength{\unitlength}{.13in}\begin{picture}(16,4)\linethickness{2pt}\multiput(0, 0)(2, 0){9}{\line(0, 1){4}}\multiput(0, 0)(0, 2){3}{\line(1, 0){16}}{\put(1,3){\makebox(0,0){$\mathbf{{r}}$}}}{\put(3,3){\makebox(0,0){$\mathbf{{\dots}}$}}}{\put(5,3){\makebox(0,0){$\mathbf{{r\prime}}$}}}{\put(7,3){\makebox(0,0){$\mathbf{{b}}$}}}{\put(9,3){\makebox(0,0){$\mathbf{{r\prime\prime}}$}}}{\put(11,3){\makebox(0,0){$\mathbf{{\dots}}$}}}{\put(13,3){\makebox(0,0){$\mathbf{{d}}$}}}{\put(15,3){\makebox(0,0){$\mathbf{{\dots}}$}}}{\put(1,1){\makebox(0,0){$\mathbf{{0}}$}}}{\put(3,1){\makebox(0,0){$\mathbf{{1+}}$}}}{\put(5,1){\makebox(0,0){$\mathbf{{2}}$}}}{\put(7,1){\makebox(0,0){$\mathbf{{1}}$}}}{\put(9,1){\makebox(0,0){$\mathbf{{2}}$}}}{\put(11,1){\makebox(0,0){$\mathbf{{1+}}$}}}{\put(13,1){\makebox(0,0){$\mathbf{{0}}$}}}{\put(15,1){\makebox(0,0){$\mathbf{{1+}}$}}}\linethickness{1pt}
\curve(7,4,10.5,5,14,4)\thicklines\put(13.2,4.4){\vector(3,-1){.8}}\linethickness{1pt}
\curve(9,4,7.5,4.8,5.9,4)\thicklines\put(6.5,4.5){\vector(-3,-2){.8}}
\end{picture}}&&\multirow{2}{*}{\setlength{\unitlength}{.13in}\begin{picture}(16,4)\linethickness{2pt}\multiput(0, 0)(2, 0){9}{\line(0, 1){4}}\multiput(0, 0)(0, 2){3}{\line(1, 0){16}}{\put(1,3){\makebox(0,0){$\mathbf{{r}}$}}}{\put(3,3){\makebox(0,0){$\mathbf{{\dots}}$}}}{\put(5,3){\makebox(0,0){$\mathbf{{r\prime}}$}}}{\put(7,3){\makebox(0,0){$\mathbf{{r\prime\prime}}$}}}{\put(9,3){\makebox(0,0){$\mathbf{{\dots}}$}}}{\put(11,3){\makebox(0,0){$\mathbf{{d}}$}}}{\put(13,3){\makebox(0,0){$\mathbf{{b}}$}}}{\put(15,3){\makebox(0,0){$\mathbf{{\dots}}$}}}{\put(1,1){\makebox(0,0){$\mathbf{{0}}$}}}{\put(3,1){\makebox(0,0){$\mathbf{{1+}}$}}}{\put(5,1){\makebox(0,0){$\mathbf{{2}}$}}}{\put(7,1){\makebox(0,0){$\mathbf{{2}}$}}}{\put(9,1){\makebox(0,0){$\mathbf{{1+}}$}}}{\put(11,1){\makebox(0,0){$\mathbf{{0}}$}}}{\put(13,1){\makebox(0,0){$\mathbf{{1}}$}}}{\put(15,1){\makebox(0,0){$\mathbf{{1+}}$}}}\end{picture}}\\&$\rightarrow $&\end{tabular}
\end{center}\vspace{1cm}
\item If $g_{b(i)+1}^i=2$, $g_{b(i)-1}^i=1$, and $r_{b(i)-1}^i<r_{b(i)+1}^i$, let $$PF^{i+1}=\operatorname{dinvchange}(r_{b(i)+1}^i,r_{b(i)}^i,PF^i).$$
\begin{center}\begin{tabular}{rcl}\multirow{2}{*}{\setlength{\unitlength}{.13in}\begin{picture}(16,4)\linethickness{2pt}\multiput(0, 0)(2, 0){9}{\line(0, 1){4}}\multiput(0, 0)(0, 2){3}{\line(1, 0){16}}{\put(1,3){\makebox(0,0){$\mathbf{{r}}$}}}{\put(3,3){\makebox(0,0){$\mathbf{{\dots}}$}}}{\put(5,3){\makebox(0,0){$\mathbf{{r\prime}}$}}}{\put(7,3){\makebox(0,0){$\mathbf{{b}}$}}}{\put(9,3){\makebox(0,0){$\mathbf{{r\prime\prime}}$}}}{\put(11,3){\makebox(0,0){$\mathbf{{\dots}}$}}}{\put(13,3){\makebox(0,0){$\mathbf{{d}}$}}}{\put(15,3){\makebox(0,0){$\mathbf{{\dots}}$}}}{\put(1,1){\makebox(0,0){$\mathbf{{0}}$}}}{\put(3,1){\makebox(0,0){$\mathbf{{1+}}$}}}{\put(5,1){\makebox(0,0){$\mathbf{{1}}$}}}{\put(7,1){\makebox(0,0){$\mathbf{{1}}$}}}{\put(9,1){\makebox(0,0){$\mathbf{{2}}$}}}{\put(11,1){\makebox(0,0){$\mathbf{{1+}}$}}}{\put(13,1){\makebox(0,0){$\mathbf{{0}}$}}}{\put(15,1){\makebox(0,0){$\mathbf{{1+}}$}}}\curve(7,4,10.5,5,14,4)\thicklines\put(13.2,4.4){\vector(3,-1){.8}}\linethickness{1pt}
\curve(9,4,7.5,4.8,5.9,4)\thicklines\put(6.5,4.5){\vector(-3,-2){.8}}\end{picture}}&&\multirow{2}{*}{\setlength{\unitlength}{.13in}\begin{picture}(16,4)\linethickness{2pt}\multiput(0, 0)(2, 0){9}{\line(0, 1){4}}\multiput(0, 0)(0, 2){3}{\line(1, 0){16}}{\put(1,3){\makebox(0,0){$\mathbf{{r}}$}}}{\put(3,3){\makebox(0,0){$\mathbf{{\dots}}$}}}{\put(5,3){\makebox(0,0){$\mathbf{{r\prime}}$}}}{\put(7,3){\makebox(0,0){$\mathbf{{r\prime\prime}}$}}}{\put(9,3){\makebox(0,0){$\mathbf{{\dots}}$}}}{\put(11,3){\makebox(0,0){$\mathbf{{d}}$}}}{\put(13,3){\makebox(0,0){$\mathbf{{b}}$}}}{\put(15,3){\makebox(0,0){$\mathbf{{\dots}}$}}}{\put(1,1){\makebox(0,0){$\mathbf{{0}}$}}}{\put(3,1){\makebox(0,0){$\mathbf{{1+}}$}}}{\put(5,1){\makebox(0,0){$\mathbf{{1}}$}}}{\put(7,1){\makebox(0,0){$\mathbf{{2}}$}}}{\put(9,1){\makebox(0,0){$\mathbf{{1+}}$}}}{\put(11,1){\makebox(0,0){$\mathbf{{0}}$}}}{\put(13,1){\makebox(0,0){$\mathbf{{1}}$}}}{\put(15,1){\makebox(0,0){$\mathbf{{1+}}$}}}\end{picture}}\\&$\rightarrow $&\end{tabular}
\end{center}\vspace{1cm}
\item If $g_{b(i)+1}^i=2$, $g_{b(i)-1}^i=1$, $r_{b(i)-1}^i>r_{b(i)+1}^i$, and in addition $b(i)-2\neq 1$, let $$PF^{i+1}=\operatorname{dinvchange}(r_{b(i)}^i,r_{b(i)-1}^i,PF^i).$$
\begin{center}\begin{tabular}{rcl}\multirow{2}{*}{\setlength{\unitlength}{.13in}\begin{picture}(16,4)\linethickness{2pt}\multiput(0, 0)(2, 0){9}{\line(0, 1){4}}\multiput(0, 0)(0, 2){3}{\line(1, 0){16}}{\put(1,3){\makebox(0,0){$\mathbf{{r}}$}}}{\put(3,3){\makebox(0,0){$\mathbf{{\dots}}$}}}{\put(5,3){\makebox(0,0){$\mathbf{{r\prime}}$}}}{\put(7,3){\makebox(0,0){$\mathbf{{b}}$}}}{\put(9,3){\makebox(0,0){$\mathbf{{r\prime\prime}}$}}}{\put(11,3){\makebox(0,0){$\mathbf{{\dots}}$}}}{\put(13,3){\makebox(0,0){$\mathbf{{d}}$}}}{\put(15,3){\makebox(0,0){$\mathbf{{\dots}}$}}}{\put(1,1){\makebox(0,0){$\mathbf{{0}}$}}}{\put(3,1){\makebox(0,0){$\mathbf{{1+}}$}}}{\put(5,1){\makebox(0,0){$\mathbf{{1}}$}}}{\put(7,1){\makebox(0,0){$\mathbf{{1}}$}}}{\put(9,1){\makebox(0,0){$\mathbf{{2}}$}}}{\put(11,1){\makebox(0,0){$\mathbf{{1+}}$}}}{\put(13,1){\makebox(0,0){$\mathbf{{0}}$}}}{\put(15,1){\makebox(0,0){$\mathbf{{1+}}$}}}\linethickness{1pt}
\curve(5,4,9.5,5,14,4)\thicklines\put(13.2,4.4){\vector(3,-1){.8}}\linethickness{1pt}
\curve(7,4,5.5,4.8,4,4)\thicklines\put(4.6,4.5){\vector(-3,-2){.8}}\end{picture}}&&\multirow{2}{*}{\setlength{\unitlength}{.13in}\begin{picture}(16,4)\linethickness{2pt}\multiput(0, 0)(2, 0){9}{\line(0, 1){4}}\multiput(0, 0)(0, 2){3}{\line(1, 0){16}}{\put(1,3){\makebox(0,0){$\mathbf{{r}}$}}}{\put(3,3){\makebox(0,0){$\mathbf{{\dots}}$}}}{\put(5,3){\makebox(0,0){$\mathbf{{b}}$}}}{\put(7,3){\makebox(0,0){$\mathbf{{r\prime\prime}}$}}}{\put(9,3){\makebox(0,0){$\mathbf{{\dots}}$}}}{\put(11,3){\makebox(0,0){$\mathbf{{d}}$}}}{\put(13,3){\makebox(0,0){$\mathbf{{r\prime}}$}}}{\put(15,3){\makebox(0,0){$\mathbf{{\dots}}$}}}{\put(1,1){\makebox(0,0){$\mathbf{{0}}$}}}{\put(3,1){\makebox(0,0){$\mathbf{{1+}}$}}}{\put(5,1){\makebox(0,0){$\mathbf{{1}}$}}}{\put(7,1){\makebox(0,0){$\mathbf{{2}}$}}}{\put(9,1){\makebox(0,0){$\mathbf{{2+}}$}}}{\put(11,1){\makebox(0,0){$\mathbf{{0}}$}}}{\put(13,1){\makebox(0,0){$\mathbf{{1}}$}}}{\put(15,1){\makebox(0,0){$\mathbf{{1+}}$}}}\end{picture}}\\&$\rightarrow $&\end{tabular}
\end{center}\vspace{1cm}
\item If $g_{b(i)+1}^i=2$, $g_{b(i)-1}^i=1$, $r_{b(i)-1}^i>r_{b(i)+1}^i$, and in addition $b(i)-2=1$ and $r_1^i<r_{b(i)}^i$, let $$PF^{i+1}=\operatorname{dinvchange}(r_{b(i)}^i,r_{b(i)-1}^i,PF^i).$$
\begin{center}\begin{tabular}{rcl}\multirow{2}{*}{\setlength{\unitlength}{.13in}\begin{picture}(14,4)\linethickness{2pt}\multiput(0, 0)(2, 0){8}{\line(0, 1){4}}\multiput(0, 0)(0, 2){3}{\line(1, 0){14}}{\put(1,3){\makebox(0,0){$\mathbf{{r}}$}}}{\put(3,3){\makebox(0,0){$\mathbf{{r\prime}}$}}}{\put(5,3){\makebox(0,0){$\mathbf{{b}}$}}}{\put(7,3){\makebox(0,0){$\mathbf{{r\prime\prime}}$}}}{\put(9,3){\makebox(0,0){$\mathbf{{\dots}}$}}}{\put(11,3){\makebox(0,0){$\mathbf{{d}}$}}}{\put(13,3){\makebox(0,0){$\mathbf{{\dots}}$}}}{\put(1,1){\makebox(0,0){$\mathbf{{0}}$}}}{\put(3,1){\makebox(0,0){$\mathbf{{1}}$}}}{\put(5,1){\makebox(0,0){$\mathbf{{1}}$}}}{\put(7,1){\makebox(0,0){$\mathbf{{2}}$}}}{\put(9,1){\makebox(0,0){$\mathbf{{1+}}$}}}{\put(11,1){\makebox(0,0){$\mathbf{{0}}$}}}{\put(13,1){\makebox(0,0){$\mathbf{{1+}}$}}}\linethickness{1pt}
\curve(3,4,7.5,5,12,4)\thicklines\put(11.2,4.4){\vector(3,-1){.8}}\linethickness{1pt}
\curve(5,4,3.5,4.8,2,4)\thicklines\put(2.6,4.5){\vector(-3,-2){.8}}\end{picture}}&&\multirow{2}{*}{\setlength{\unitlength}{.13in}\begin{picture}(14,4)\linethickness{2pt}\multiput(0, 0)(2, 0){8}{\line(0, 1){4}}\multiput(0, 0)(0, 2){3}{\line(1, 0){14}}{\put(1,3){\makebox(0,0){$\mathbf{{r}}$}}}{\put(3,3){\makebox(0,0){$\mathbf{{b}}$}}}{\put(5,3){\makebox(0,0){$\mathbf{{r\prime\prime}}$}}}{\put(7,3){\makebox(0,0){$\mathbf{{\dots}}$}}}{\put(9,3){\makebox(0,0){$\mathbf{{d}}$}}}{\put(11,3){\makebox(0,0){$\mathbf{{r\prime}}$}}}{\put(13,3){\makebox(0,0){$\mathbf{{\dots}}$}}}{\put(1,1){\makebox(0,0){$\mathbf{{0}}$}}}{\put(3,1){\makebox(0,0){$\mathbf{{1}}$}}}{\put(5,1){\makebox(0,0){$\mathbf{{2}}$}}}{\put(7,1){\makebox(0,0){$\mathbf{{2+}}$}}}{\put(9,1){\makebox(0,0){$\mathbf{{0}}$}}}{\put(11,1){\makebox(0,0){$\mathbf{{1}}$}}}{\put(13,1){\makebox(0,0){$\mathbf{{1+}}$}}}\end{picture}}\\&$\rightarrow $&\end{tabular}
\end{center}\vspace{1cm}
\item If $g_{b(i)+1}^i=2$, $g_{b(i)-1}^i=1$, $r_{b(i)-1}^i>r_{b(i)+1}^i$, and in addition $b(i)-2=1$ and $r_1^i>r_{b(i)}^i$, let $$PF^{i+1}=\operatorname{dinvchange}(r_{d(i)}^i,r_{b(i)-1}^i,PF^i).$$
\begin{center}\begin{tabular}{rcl}\multirow{2}{*}{\setlength{\unitlength}{.13in}\begin{picture}(14,4)\linethickness{2pt}\multiput(0, 0)(2, 0){8}{\line(0, 1){4}}\multiput(0, 0)(0, 2){3}{\line(1, 0){14}}{\put(1,3){\makebox(0,0){$\mathbf{{r}}$}}}{\put(3,3){\makebox(0,0){$\mathbf{{r\prime}}$}}}{\put(5,3){\makebox(0,0){$\mathbf{{b}}$}}}{\put(7,3){\makebox(0,0){$\mathbf{{r\prime\prime}}$}}}{\put(9,3){\makebox(0,0){$\mathbf{{\dots}}$}}}{\put(11,3){\makebox(0,0){$\mathbf{{d}}$}}}{\put(13,3){\makebox(0,0){$\mathbf{{\dots}}$}}}{\put(1,1){\makebox(0,0){$\mathbf{{0}}$}}}{\put(3,1){\makebox(0,0){$\mathbf{{1}}$}}}{\put(5,1){\makebox(0,0){$\mathbf{{1}}$}}}{\put(7,1){\makebox(0,0){$\mathbf{{2}}$}}}{\put(9,1){\makebox(0,0){$\mathbf{{1+}}$}}}{\put(11,1){\makebox(0,0){$\mathbf{{0}}$}}}{\put(13,1){\makebox(0,0){$\mathbf{{1+}}$}}}\linethickness{1pt}
\curve(3,4,7.5,5,12,4)\thicklines\put(11.1,4.4){\vector(3,-1){.8}}\linethickness{1pt}
\curve(0,4,5.5,5.2,11,4)\thicklines\put(0.6,4.5){\vector(-3,-2){.8}}
\end{picture}}&&\multirow{2}{*}{\setlength{\unitlength}{.13in}\begin{picture}(14,4)\linethickness{2pt}\multiput(0, 0)(2, 0){8}{\line(0, 1){4}}\multiput(0, 0)(0, 2){3}{\line(1, 0){14}}{\put(1,3){\makebox(0,0){$\mathbf{{d}}$}}}{\put(3,3){\makebox(0,0){$\mathbf{{b}}$}}}{\put(5,3){\makebox(0,0){$\mathbf{{r\prime\prime}}$}}}{\put(7,3){\makebox(0,0){$\mathbf{{\dots}}$}}}{\put(9,3){\makebox(0,0){$\mathbf{{r}}$}}}{\put(11,3){\makebox(0,0){$\mathbf{{r\prime}}$}}}{\put(13,3){\makebox(0,0){$\mathbf{{\dots}}$}}}{\put(1,1){\makebox(0,0){$\mathbf{{0}}$}}}{\put(3,1){\makebox(0,0){$\mathbf{{1}}$}}}{\put(5,1){\makebox(0,0){$\mathbf{{2}}$}}}{\put(7,1){\makebox(0,0){$\mathbf{{2+}}$}}}{\put(9,1){\makebox(0,0){$\mathbf{{0}}$}}}{\put(11,1){\makebox(0,0){$\mathbf{{1}}$}}}{\put(13,1){\makebox(0,0){$\mathbf{{1+}}$}}}\end{picture}}\\&$\rightarrow $&\end{tabular}
\end{center}\vspace{1cm}
\end{enumerate}
\end{procedure}
\begin{lemma} The previous parking functions are well defined.
\end{lemma}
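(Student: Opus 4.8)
The plan is to unwind the operations $\operatorname{dinvdec}$, $\operatorname{dinvinc}$ and $\operatorname{dinvchange}$ and to check, clause by clause over the seven cases of Procedure \ref{k=1}, that every dinv operation invoked is legal, treating the \textbf{recursive condition} $g^i_{b(i)+1}\ne0$ and the structural facts below as standing hypotheses (the recursive condition itself being verified separately, by induction along the procedure; termination, that the troublesome set eventually empties and Case 1 applies, is likewise handled afterward). Recall from the remark after the definition of full degree that $\operatorname{dinvdec}(r,PF)$ is defined exactly when $\operatorname{dinv}(r,PF)\ge1$ and $\operatorname{dinvinc}(r,PF)$ exactly when $r$ is not of full degree in $PF$, i.e.\ $\operatorname{dinv}(r,PF)<|\operatorname{degset}(r,PF)|-1$; moreover a $\operatorname{dinvdec}$ changes neither the diagonal word nor the dinv of any car other than its target, so for $\operatorname{dinvchange}(r_i,r_j,PF^i)=\operatorname{dinvinc}(r_i,\operatorname{dinvdec}(r_j,PF^i))$ it suffices to verify, directly in $PF^i$, that $\operatorname{dinv}(r_j,PF^i)\ge1$ and that $r_i$ is not of full degree. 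Thus each case reduces to three things: existence of the named cars, a diagonal inversion witnessing $\operatorname{dinv}\ge1$ for the $\operatorname{dinvdec}$ target, and two elements of the degree set lying to the left of the $\operatorname{dinvinc}$ target ($D_0$ being eligible as one such when the target sits on diagonal $0$).

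I would first record two structural facts. Since every $\operatorname{dinvchange}$ preserves the diagonal word, each $PF^i$ has exactly as many diagonal-$0$ cars as $PF^1$, namely two; so positions $1$ and $d(i)$ are the only diagonal-$0$ positions, and whenever $b(i)$ exists we get $2<b(i)<d(i)$ — a diagonal-$1$ position immediately after position $1$ lies in the first column and so cannot be $b(i)$ — and hence $g^i_{b(i)-1}\ge1$. With this and the recursive condition, the case hypotheses on $g^i_{b(i)+1}$ (which then lies in $\{1,2\}$), on $g^i_{b(i)-1}$, on whether $b(i)-2=1$, and on the orders among $r^i_{b(i)-1},r^i_{b(i)+1},r^i_1,r^i_{b(i)}$ are mutually exclusive and, outside Case 1, exhaustive, and they pin down the local diagonal configuration to exactly the picture drawn for that case; this is what exhibits the remaining named cars — for instance in Case 2 the equality $g^i_{b(i)+1}=1$ forces $a(i)=b(i)+1$ to exist and, by maximality of $b(i)$, to be small.

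The $\operatorname{dinv}\ge1$ checks are then purely local: the $\operatorname{dinvdec}$ target is always the to-the-right, larger (secondary) or smaller (primary) member of an attacking pair. In Case 1 it is the secondary dinv of the big car $r^i_2$ with $r^i_{d(i)}$ on diagonal $0$, or — in the second subcase — the primary dinv of $r^i_1$ with $r^i_{d(i)}$ (using $r^i_1<r^i_{d(i)}$, which follows from the subcase hypothesis and the local shape); in Cases 2--4 the secondary dinv of $r^i_{b(i)}$ with $r^i_{d(i)}$; in Cases 5--7 a dinv of $r^i_{b(i)-1}$ with a car further to the right forced by the subcase hypotheses. For the ``not of full degree'' checks the case hypotheses are used essentially: for a diagonal-$1$ target (Cases 2--6) one produces two earlier cars in its degree set — in Case 2 the car $r^i_{b(i)}$ plus a diagonal-$1$ car from the stretch between $r^i_1$ and $r^i_{b(i)}$, which is non-empty by the structural fact above; in Case 6 the cars $r^i_{b(i)-1}$ and $r^i_1$, where $r^i_{b(i)-1}>r^i_{b(i)}$ is forced by $g^i_{b(i)+1}=2$, $r^i_{b(i)-1}>r^i_{b(i)+1}$ and the increasing column condition $r^i_{b(i)}<r^i_{b(i)+1}$ — whereas for the diagonal-$0$ target $r^i_{d(i)}$ in Case 7 the two elements are $D_0$ and $r^i_1$, which in that case satisfies $r^i_1>r^i_{b(i)}>r^i_{d(i)}$; this, incidentally, is exactly why Case 7 increases the dinv of $r^i_{d(i)}$ rather than of $r^i_{b(i)}$.

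The main obstacle I expect is precisely the ``not of full degree'' verification. ``$\operatorname{dinv}\ge1$'' needs only a single local attacking pair, but being not of full degree is a more global assertion — that two elements of the degree set sit to the left of the target — and it is here that the recursive condition (via the car strictly between $r^i_{b(i)}$ and $r^i_{d(i)}$) and the finer case splits (via the forced orderings among $r^i_1$, $r^i_{b(i)\pm1}$, $r^i_{b(i)}$) genuinely earn their keep. A clean organizing principle is to read each pictured move as a single ``step to the left'' in Procedure \ref{proc:treeproc}, so that not-of-full-degree becomes exactly the statement that the relevant step is not already as far left as the procedure permits — the statement those hypotheses are built to guarantee.
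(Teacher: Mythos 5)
Your strategy coincides with the paper's: treat the recursive condition as a standing hypothesis, reduce $\operatorname{dinvchange}(r_i,r_j,PF^i)$ to checking in $PF^i$ itself that the $\operatorname{dinvdec}$ target has nonempty dinv set and that the $\operatorname{dinvinc}$ target is not of full degree (two degree-set elements not in its dinv set, with $0$ admissible for a diagonal-$0$ target), and verify this case by case. Your explicit remark that $|\operatorname{degset}|$ depends only on the diagonal word, so both checks can be made in $PF^i$ before either operation is applied, usefully spells out what the paper leaves implicit; your dinv-set witnesses ($r^i_{d(i)}$ in the dinv set of $r^i_{b(i)}$ in Cases 2--4, of $r^i_{b(i)-1}$ in Cases 5--7 via $r^i_{b(i)-1}>r^i_{b(i)+1}>r^i_{b(i)}>r^i_{d(i)}$, and of $r^i_2$ resp.\ $r^i_1$ in Case 1) and your Case 6 and Case 7 degree-set witnesses are exactly the paper's.

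There is, however, a concrete flaw in your Case 2 degree-set argument. You take as second witness for $r^i_{a(i)}$ ``a diagonal-$1$ car from the stretch between $r^i_1$ and $r^i_{b(i)}$,'' but a diagonal-$1$ car belongs to the degree set of the diagonal-$1$ car $r^i_{a(i)}$ only if it is \emph{larger} than $r^i_{a(i)}$, which need not hold. For example, let $PF^i=\left[\begin{matrix}1&2&6&3&5&4\\0&1&1&1&2&0\end{matrix}\right]$: this lies in $\mathcal{A}_{[5,1]}\setminus T$, Case 2 applies with $d(i)=6$, $b(i)=3$ (car $6$), $a(i)=4$ (car $3$), and the only diagonal-$1$ car strictly between positions $1$ and $b(i)$ is the car $2<3$, so it is not in the degree set of $r^i_{a(i)}$; that degree set consists only of the positions of cars $6$ and $1$. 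The conclusion still holds, but only through the dichotomy the paper uses: either $r^i_2>r^i_{a(i)}$, in which case position $2$ is a witness, or $r^i_1<r^i_2<r^i_{a(i)}$, in which case the diagonal-$0$ car $r^i_1$ is the witness. The same dichotomy (with target $r^i_{b(i)}$, using $b(i)-2\neq 1$) is what Case 5 requires, and Cases 3--4, whose target $r^i_{b(i)+1}$ sits on diagonal $2$, need their own argument (the paper argues via the last diagonal-$1$ car preceding $r^i_{b(i)+1}$); your proposal leaves Cases 3--5 to analogy, which is precisely where this extra care is needed. A smaller slip: the definition of $b(i)$ does not exclude the first column (see $b'=2$ in Figure \ref{fig:note}), so $2<b(i)$ in Cases 2--7 holds not by definition but because $b(i)=2$ would put you in Case 1 --- which is also how the paper uses it, so your subsequent use of $g^i_{b(i)-1}\geq 1$ is fine.
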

\begin{proof} Repeatedly, we use that $r_{d(i)}^i$ is in the dinv set of $r_{b(i)}^i$, so we may decrease the dinv of $r_{b(i)}^i$. In addition: 
\begin{enumerate}
\item Note that $r_{d(i)}^i$ is contained in the dinv set of $r_2^i$ when $r_2^i>r_{d(i)}^i$ and is contained in the dinv set of $r_1^i$ when $r_2^i<r_{d(i)}^i$ so above dinv decreases are possible.
\item If $g_{b(i)+1}^i=1$, then by definition of $b(i)$, $b(i)+1=a(i)$, so $a(i)$ exists. Certainally, $r_{b(i)}^i$ is in the degree set of $r_{a(i)}^i$, but not its dinv set. Moreover, since we assume $b(i)\neq 2$ (or else we would apply the first step), either $r_2^i>r_{a(i)}$ and thus is in the degree set but not the dinv set of $r_{a(i)}^i$ or $r_1^i<r_2^i<r_{a(i)}$ and thus $r_1^i$ is in the degree set. Either way, we may increase the dinv of $r_{a(i)}^i$.
\item Say $r_j^i$ is the last element in first diagonal occurring before $r_{b(i)+1}^i$ in $PF^i$. Then similarly to the previous construction, either $r_j^i<r_{b(i)+1}^i$ is in the degree set of $r_{b(i)+1}^i$ or $r_{j+1}^i>r_j^i>r_{b(i)+1}^i$ is in the degree set of $r_{b(i)+1}^i$ in addition to $r_{b(i)}^i$.
\item Notice that $r_{b(i)-1}^i$ and $r_{b(i)}^i$ are in the degree set of $r_{b(i)+1}^i$.
\item Since $b(i)-2\neq 1$, $r_2^i$ is distinct from $r_{b(i)-1}^i$. Then $r_1^i$ or $r_2^i$ as well as $r_{b(i)-1}^i$ are in the degree set of $r_{b(i)}^i$. Since $r_{b(i)-1}^i>r_{b(i)+1}^i>r_{b(i)}^i>r_{d(i)}^i$, $r_{d(i)}^i$ is in the dinv set of $r_{b(i)-1}^i$.
\item Notice that $r_{b(i)-1}^i$ and $r_1^i$ are in the degree set of $r_{b(i)}^i$. Again $r_{d(i)}^i$ is in the dinv set of $r_{b(i)-1}^i$.
\item Notice that $r_{1}^i>r_{b(i)}^i>r_{d(i)}^i$ and $0$ are in the degree set of $r_{d(i)}^i$. Again $r_{d(i)}^i$ is in the dinv set of $r_{b(i)-1}^i$.
\end{enumerate}
\end{proof}
Implicitly, we have assumed in Procedure \ref{k=1} that $g_{b(i)+1}^i\neq 0$. We must justify this assumption. Recall that $$T=\left\{\left[\begin{matrix}r_1&r_2&r_3&\dots&r_{n-2}&r_{n-1}&r_n\\g_1&g_2&g_3&\dots&g_{n-2}& 1&0\end{matrix}\right]\in \mathcal{C}^{\tau}_{n-1,1}: r_{n-1}>r_n\right\}$$ If $PF\in \mathcal{C}_{n-1,1}^{\tau}$ and $g_{b(i)+1}^i=0$, then $b(i)=n-1$. Since $r_{b(i)}^i>r_{d(i)}^i=r_n$, if $g_{b(i)+1}^i=0$, $PF\in T$. Thus we may assume that when $i=1$, Procedure \ref{k=1} is applied on a parking function where $g_{b(i)+1}^i\neq 0$. 
\begin{lemma} Every iteration of Procedure \ref{k=1} (except perhaps the last) results in a parking function satisfying the recursive condition.
\end{lemma}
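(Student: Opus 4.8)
The plan is to argue by induction on the step index $i$. The base case is exactly the content of the paragraph immediately preceding the lemma: any $PF\in\mathcal{A}^\tau_{n-1,1}$ with $g^1_{b(1)+1}=0$ lies in $T$, so the initial parking function $PF^1\in\mathcal{A}^\tau_{n-1,1}\backslash T$ satisfies the recursive condition (and if $b(1)$ does not even exist, step (1) of Procedure \ref{k=1} fires at once and there is nothing to check). For the inductive step one assumes $PF^i$ satisfies the recursive condition --- so that one of steps (2)--(7) applies to $PF^i$ and produces $PF^{i+1}$ --- and shows $PF^{i+1}$ satisfies it as well, unless step (1) applies to $PF^{i+1}$, in which case $PF^{i+1}=f_3(PF^1)$ is the last iteration that the statement exempts.

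Two structural observations make the case analysis short. First, every $\operatorname{dinvchange}$ merely permutes the dominoes of its argument and fixes the diagonal word, so the multiset of diagonals is constant along the procedure; as $PF^1$ has composition $[n-1,1]$, every $PF^i$ has exactly two cars on diagonal $0$, one of which is $r_1^i$. Hence every position strictly between the two diagonal-$0$ cars carries diagonal $\geq 1$, and the recursive condition for $PF^{i+1}$ can fail only if $b(i+1)+1=d(i+1)$, i.e.\ only if the last big first-diagonal car in the first part of $PF^{i+1}$ happens to be the very last car of that first part. So it suffices to show, in each of cases (2)--(7), that the last car of the first part of $PF^{i+1}$ is not simultaneously big and on diagonal $1$ --- or that $PF^{i+1}$ has no big first-diagonal car in its first part outside its first column at all, so that step (1) fires next. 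Second, by definition $b(i)$ is the \emph{last} big first-diagonal car in the first part of $PF^i$, so the stretch of $PF^i$ lying strictly between positions $b(i)$ and $d(i)$ contains no big first-diagonal car; the content of each of steps (2)--(6) is precisely a $\operatorname{dinvchange}$ that transports this ``big-car-free'' stretch essentially unscrambled and leaves it sitting immediately to the left of $r_{d(i+1)}^{i+1}$.

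With these in hand, cases (2), (3), (4) form one family: there $r_{b(i)}^i$ is carried past $r_{d(i)}^i$ into the second part of $PF^{i+1}$, so the last car of the first part of $PF^{i+1}$ is either a car of the transported big-car-free stretch or one of $r_{b(i)\pm 1}^i$, and in each subcase one reads off that it is on diagonal $\geq 2$ or is a \emph{small} first-diagonal car --- either way not $b(i+1)$ --- so the recursive condition survives. In cases (5) and (6) the car $r_{b(i)}^i$ instead remains in the first part, but then it \emph{is} $b(i+1)$ and is immediately followed by $r_{b(i)+1}^i$, which lies on diagonal $2$, so $g^{i+1}_{b(i+1)+1}=2\neq0$. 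Case (7) is the genuine exception and needs the auxiliary fact that $\operatorname{dinv}(r_{d(i)}^i,PF^i)=0$ in every $PF^i$ reached before step (7) fires (the dinv set of $r_{d(i)}^i$ is empty, since the only other diagonal-$0$ car, $r_1^i$, lies to its left): step (7) raises this dinv, which carries $r_{d(i)}^i$ to position $1$ and promotes $r_1^i$ to the role of second diagonal-$0$ car in $PF^{i+1}$; the standing hypothesis $r_1^i>r_{b(i)}^i$ of case (7) then makes every first-diagonal car remaining in the first part small relative to the new reference car $r_1^i$, so $b(i+1)$ does not exist, step (1) applies to $PF^{i+1}$, and this is exactly the last iteration the statement allows to misbehave.

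The hard part is making the position bookkeeping rigorous. Since $\operatorname{dinvchange}$ is defined by a highly non-local alteration of one recursive step of Procedure \ref{proc:treeproc} --- the point emphasized in the Nota Bene --- one cannot simply quote the accompanying pictures; one must instead re-derive the location of each relevant domino in $PF^{i+1}$ from the uniqueness of a parking function given its diagonal word together with the dinv of every car, and in particular verify that the big-car-free stretch of $PF^i$ between positions $b(i)$ and $d(i)$ is genuinely transported intact to the analogous place in $PF^{i+1}$, and that $b(i+1)$ is where claimed (or fails to exist). Once that bookkeeping is done, the two-diagonal-$0$-car count of the first observation closes every case at once.
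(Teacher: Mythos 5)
Your overall skeleton is the same as the paper's: reduce the recursive condition to exhibiting a car strictly between $b(i+1)$ and $d(i+1)$ (equivalently, that the last car of the first part of $PF^{i+1}$ is not a big first-diagonal car, or that $b(i+1)$ fails to exist so step (1) fires next), and then run through cases (2)--(7). Your two-diagonal-zero-cars observation and your reading of case (7) -- that $\operatorname{dinv}(r_{d(i)}^i,PF^i)=0$, the dinv increase puts $r_{d(i)}^i$ in front, $r_1^i$ becomes the new second main-diagonal car, and since $r_1^i>r_{b(i)}^i$ no big first-diagonal car survives in the first part outside the first column -- are correct, and in fact more careful than the paper's terse grouping of (5)--(7). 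But the decisive content of the lemma is exactly the ``position bookkeeping'' you defer to a final sentence, so as written the proposal is a plan rather than a proof: for each of (2)--(6) the claim that the last car of the first part of $PF^{i+1}$ is one of the cars you list, in the diagonal you assert, is precisely what must be derived from the uniqueness statement underlying $\operatorname{dinvchange}$, and you do not derive it.

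Moreover, the one concrete statement you propose to verify in place of that bookkeeping is false as stated. In case (2) the stretch of $PF^i$ strictly between $b(i)$ and $d(i)$ is \emph{not} transported intact to sit immediately left of $r_{d(i+1)}^{i+1}$: the small first-diagonal cars in that stretch are pushed past $r_{d(i)}^i$ (the paper's own before/after picture for step (2) records this -- the region between $a$ and $d$ changes from ``$1+$'' to ``$2+$''), so an argument resting on ``transported unscrambled'' would not survive the verification you postpone. More importantly, the real danger in case (2) is not a car from that stretch but another big first-diagonal car, namely $r_{b'(i)}^i$, landing immediately before $r_{d(i+1)}^{i+1}$; your case (2) paragraph never rules this out. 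The paper does so with a short counting argument that your write-up is missing: $r_{b'(i)}^i$ and $r_{b(i)}^i$ are both larger than $r_{a(i)}^i$ and on its diagonal, so they cannot both end up to the right of $r_{a(i)}^i$ in $PF^{i+1}$ without raising $\operatorname{dinv}(r_{a(i)}^i)$ by more than one; hence $r_{b(i+1)}^{i+1}=r_{b'(i)}^i$ stays left of $r_{a(i)}^i$, and $r_{a(i)}^i$ itself is the required car between $b(i+1)$ and $d(i+1)$. Supplying an argument of this kind in each case (dinv counts force the relative positions, rather than the pictures) is what would close the gap.
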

\begin{proof}
For each case, we identify an element after $b(i+1)$ and before $d(i+1)$ in $PF^{i+1}$. 
\begin{enumerate}
\item[(1)\phantom{-(6)}] In the first case, we construct the final parking function and do not need this assumption. Thus in all future cases we assume $b'(i)$ exists. (If $b'(i)$ does not exist, then $PF^{i+2}$ will be the image of $PF^{i+1}$ after applying (1). As we just observed, (1) does not require the assumption.)
\item[(2)\phantom{-(6)}] $r_{b'(i)}^i$ and $r_{b(i)}^i$ can't both be to the right of $r_{a(i)}^i$ in $PF^{i+1}$ (or we would have increased the dinv of $r_{a(i)}^i$ by more than one.) Thus $r_{b(i+1)}^{i+1}=r_{b'(i)}^i$ is to the left of $r_{a(i)}^i$ in $PF^{i+1}$.
\item[(3)-(4)] $r_{b(i)+1}^i$ is to the right of $r_{b(i+1)}^{i+1}$ and the left of $r_{d(i+1)}^{i+1}$.
\item[(5)-(7)] $r_{b(i)}^i$ and $r_{b(i)+1}^{i}$ are also $r_{b(i+1)}^{i+1}$ and $r_{b(i+1)+1}^{i+1}$ respectively.
\end{enumerate}
\end{proof}
We need to show that our procedure in fact terminates. We do this by showing that each step reduces the number of big elements in the first diagonal and the first part. Thus eventually we are in the first case and construct a final parking function $f^3(PF^1)$.
\begin{lemma} Every iteration of Procedure \ref{k=1} (except perhaps the last) results in a parking function which contains more big elements in the second part and the first diagonal than the previous one.
\end{lemma}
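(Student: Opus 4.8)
The plan is a case analysis over the seven cases of Procedure~\ref{k=1}. Case~(1) is exactly the step that outputs $f_3(PF^1)$, so it is the ``perhaps the last'' iteration and is excluded; for cases~(2)--(7) one must show that $PF^{i+1}$ has strictly more big first-diagonal cars in its second part than $PF^i$ does. In each of these cases $PF^{i+1}=\operatorname{dinvchange}(r,r',PF^i)$ for an explicit pair of cars, and the before/after diagrams accompanying the Procedure describe the resulting rearrangement: one big, first-diagonal car of the first part of $PF^i$ — namely $r_{b(i)}^i$ in cases~(2)--(4) and $r_{b(i)-1}^i$ in cases~(5)--(7) — is moved rightward, past $r_{d(i)}^i$, so that in $PF^{i+1}$ it lands immediately after the second touch of the main diagonal, at the head of the second part, while everything lying weakly to the right of that second touch is otherwise left in place.

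Granting the accuracy of those diagrams, the count is immediate. First one checks that the car pushed right is genuinely big: $r_{b(i)}^i>r_{d(i)}^i$ because $r_{b(i)}^i$ is big by definition of $b(i)$, and whenever $g_{b(i)+1}^i=g_{b(i)}^i+1$ the increasing column condition gives $r_{b(i)+1}^i>r_{b(i)}^i$, so the hypothesis $r_{b(i)-1}^i>r_{b(i)+1}^i$ of cases~(5)--(7) yields $r_{b(i)-1}^i>r_{b(i)+1}^i>r_{b(i)}^i>r_{d(i)}^i$; moreover this car stays on the first diagonal since $\operatorname{dinvchange}$ preserves the diagonal word. Next one pins down the second main-diagonal touch of $PF^{i+1}$: it is $r_{d(i)}^i$, with its position lowered by one, in cases~(2)--(6), and it is $r_1^i$ in case~(7), where the $\operatorname{dinvinc}$ is applied to $r_{d(i)}^i$ itself and sends it to the front; in either case the pushed car heads the second part and is big relative to the relevant threshold — in case~(7) this is forced by $PF^{i+1}$ being a parking function, since the threshold car is then followed, within its own column, by the pushed car. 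Finally, every big first-diagonal car already in the second part of $PF^i$ sits at a position the step leaves undisturbed, hence is still in the second part and still big (in case~(7) one must also confirm that raising the threshold from $r_{d(i)}^i$ to $r_1^i$ demotes none of them, which again one reads off the diagram). Thus the second part of $PF^{i+1}$ contains all the big first-diagonal cars of the second part of $PF^i$ and one more. (Equivalently, the first part loses exactly one big first-diagonal car, which is the phrasing used in the termination argument above.)

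The crux — and essentially the only work — is justifying those diagrams, i.e.\ proving that $\operatorname{dinvchange}(r,r',PF^i)$ really is the rearrangement depicted. This is precisely the difficulty flagged in the Nota Bene: a $\operatorname{dinvchange}$ is defined only through the recursive construction of Procedure~\ref{proc:treeproc} and can a priori permute the dominoes wildly, so one cannot merely ``slide $r$ and $r'$''. I would handle it by unwinding each of the two constituent moves through Procedure~\ref{proc:treeproc}, using that the degree set of every car — hence the list of admissible insertion slots — depends only on the fixed diagonal word, that $r_{d(i)}^i$ is the first (leftmost) element of the dinv set of the car being pushed right (what the well-definedness lemma essentially records, case by case), and the recursive condition $g_{b(i)+1}^i\neq 0$, which guarantees there is still a car strictly between $r_{b(i)}^i$ and $r_{d(i)}^i$ and so keeps the rearrangement confined to the block delimited by the moved cars. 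The seven-way split is forced by the possible relative positions and sizes of $r_1^i$, $r_{b(i)}^i$, the diagonal-$2$ car $r_{b(i)+1}^i$ and $r_{d(i)}^i$, and case~(7), in which the threshold car itself moves, will be the most delicate to control.
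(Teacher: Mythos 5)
Your proposal matches the paper's own proof in both structure and substance: like the paper, you argue that the cars already in the second part are left undisturbed (their dinv is never changed, so they stay to the right of $r_{d(i)}^i$), you identify the single big first-diagonal car pushed past $r_{d(i)}^i$ in each case ($r_{b(i)}^i$ in cases (2)--(4), $r_{b(i)-1}^i$ in cases (5)--(7)), and you treat case (7), where the second main-diagonal car becomes $r_1^i$, separately. The paper is no more detailed than you are about justifying the displayed rearrangements --- it reads them off the diagrams accompanying Procedure \ref{k=1} --- so your deferral of that verification, and your explicit flagging of the bigness inequalities and of the case-(7) threshold change (which the paper's proof passes over silently), is consistent with, indeed slightly more careful than, its level of rigor.
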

\begin{proof}
Notice that all of our cases except the last, the elements from the last part of $PF^i$ are in the last part of $PF^{i+1}$. (We never change the dinv of these elements, and thus we can show recursively that they remain to the right of $r^i_{d(i)}$ in $PF^{i+1}$.) Thus in all but the last case, we need only identify a single big element in the first part and diagonal of $PF^i$ which occurs in the second part of $PF^{i+1}$. For cases (2)-(4), $r_{b(i)}^i$ is this element. For cases (5) and (6) this is $r_{b(i)-1}^i$.
For (7), we exchange the position of $r_{1}^i$ and $r_{d(i)}^i$ and add the element $r_{2}^i$ on top of $r_{1}^i$ in the top part of $PF^i$ to create $PF^{i+1}$. Thus every element in the top part of $PF^i$ besides $r_{d(i)}$ remains in the top part of $PF^{i+1}$ (and in fact to the right of $r_2^i$).
\end{proof}
\begin{lemma} The image of $f_3$ is a subset of $\mathcal{C}^\tau_{1,n-1}$.
\end{lemma}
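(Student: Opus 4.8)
The goal is to show that for each $PF^1\in\mathcal{A}^\tau_{n-1,1}\setminus T$ the output $f_3(PF^1)$ of Procedure \ref{k=1} lies in $\mathcal{C}^\tau_{1,n-1}$, i.e.\ is a parking function of diagonal word $\tau$ and composition $[1,n-1]$. Two of the three ingredients come essentially for free. Termination is the content of the preceding lemmas: cases (2)--(7) each strictly increase the number of big elements in the second part and first diagonal, so after finitely many iterations the running parking function $PF^i$ has no big element in its first part and first diagonal except possibly one in its first column, and then case (1) fires and outputs $f_3(PF^1)$. And every move in the procedure is built from $\operatorname{dinvdec}$'s and $\operatorname{dinvchange}$'s, which by their very definition fix the diagonal word, so $\operatorname{diagword}(f_3(PF^1))=\tau$; in particular every $PF^i$, and $f_3(PF^1)$ itself, has exactly two cars on the main diagonal, namely the two diagonal-$0$ cars of $PF^1$. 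Since position $1$ of any parking function is occupied by a diagonal-$0$ car, the whole statement reduces to one point: \emph{the last step of Procedure \ref{k=1} places the second diagonal-$0$ car of $PF^i$ in position $2$}, so that $\operatorname{comp}(f_3(PF^1))=[1,n-1]$.

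To prove this I would unwind the last step through the Haglund--Loehr construction of Procedure \ref{proc:treeproc}. When case (1) fires, $PF^i$ has its two diagonal-$0$ cars --- call them $\gamma$ in position $1$ and $\delta=r^i_{d(i)}$ in position $d(i)$ --- and by the case-(1) hypothesis the only possible big ($>\delta$) car of diagonal $1$ in positions $2,\dots,d(i)-1$ is $r_2^i$, which is then in the first column, so $g_2^i=1$. In the tree construction of $PF^i$ the diagonal-$0$ cars are read last in $\tau$, in increasing order, hence the dominoes carrying them are the first two non-$D_0$ dominoes placed and at that stage are adjacent; the diagonal-$1$ and higher cars, added afterward, are what push the domino of $\delta$ out to position $d(i)$. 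The move prescribed in case (1) --- $\operatorname{dinvdec}(r_2^i,PF^i)$ when $r_2^i>\delta$, else $\operatorname{dinvdec}(r_1^i,PF^i)$ --- is precisely the re-placement of one early domino that restores their adjacency at the front: in the first case the domino of $r_2^i$ is slid to the right of the domino of $\delta$, leaving $\gamma,\delta$ in positions $1,2$; in the second case the domino of $\gamma$ is slid past the domino of $\delta$, leaving $\delta,\gamma$ in positions $1,2$. The point that must still be checked is that no car of diagonal $\ge 1$ ends up in front of this pair: a car of diagonal $\ge 2$ can never be placed directly to the right of a diagonal-$0$ domino, and a diagonal-$1$ car placed directly to the right of a diagonal-$0$ domino must be larger than the car it follows --- which the case-(1) hypothesis excludes among the diagonal-$1$ cars of the first part, while the diagonal-$1$ cars of the second part stay to the right of $\delta$ throughout the construction, by the recursive condition and the structural lemmas already established. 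Hence the two diagonal-$0$ cars occupy positions $1$ and $2$ of $f_3(PF^1)$, giving composition $[1,n-1]$.

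I expect the main obstacle to be exactly this last step. A $\operatorname{dinvdec}$ alters an \emph{early} choice in the recursive construction of the parking function, so its effect on positions is global rather than local, and one cannot simply read the answer off the schematic pictures in Procedure \ref{k=1}; one has to argue from the tree construction itself (or from the fact that a parking function is determined by its diagonal word together with the dinv of each car) that nothing from the bulk of the parking function slips in front of the two diagonal-$0$ dominoes, and the case-(1) hypothesis controlling big diagonal-$1$ elements in the first part is precisely the ingredient that makes this work. Everything else --- termination, preservation of the diagonal word, and the resulting count of diagonal-$0$ cars --- is immediate from the definitions and the lemmas already proved.
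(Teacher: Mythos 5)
Your overall route is the same as the paper's: reduce the lemma to the behavior of the final step (1) of Procedure \ref{k=1}, split into the two cases $r_2^i>r_{d(i)}^i$ and $r_2^i<r_{d(i)}^i$, and argue that the single $\operatorname{dinvdec}$ leaves the two diagonal-$0$ cars in positions $1$ and $2$, using the case-(1) hypothesis to control diagonal-$1$ cars in the first part; the paper's proof does exactly this (concluding that the image begins $r_{d(i)}^i,r_1^i$ in one case and $r_1^i,r_{d(i)}^i,r_2^i$ in the other), and like you it treats the ``nothing slips in front'' point rather briskly.

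There is, however, a soft spot in your justification in the case $r_2^i>r_{d(i)}^i$ (a big car in the first column). There the pair you want in positions $1,2$ is $r_1^i,r_{d(i)}^i$, so a diagonal-$1$ intruder at position $2$ only needs to exceed $r_1^i$, not $r_{d(i)}^i$; your appeal to the case-(1) hypothesis (``no big diagonal-$1$ cars in the first part'') rules out cars larger than $r_{d(i)}^i$ but says nothing about a first-part diagonal-$1$ car $x$ with $r_1^i<x<r_{d(i)}^i$, which the increasing-column condition would happily allow directly above $r_1^i$. The paper's argument in this case is pitched differently: it observes that every car between $r_2^i$ and $r_{d(i)}^i$ in $PF^i$ is either on diagonal $\geq 2$ or is a diagonal-$1$ car smaller than $r_{d(i)}^i$ (hence smaller than $r_2^i$), so the decrease slides $r_2^i$ directly on top of $r_{d(i)}^i$ and the image begins $r_1^i, r_{d(i)}^i, r_2^i$. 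To make either version airtight for the mid-sized cars you would note that such an $x$ sitting at position $2$ of the image would have to be of full degree (every element of its degree set other than $r_1^i$ to its right), whereas in $PF^i$ both $r_1^i$ and $r_2^i$ belong to its degree set and lie to its left, contradicting the fact that $\operatorname{dinvdec}(r_2^i,\cdot)$ preserves $\operatorname{dinv}(x,\cdot)$; a similar dinv-preservation count is what actually keeps the second-part diagonal-$1$ cars to the right of $r_{d(i)}^i$, since the structural lemma you cite only covers steps (2)--(7), not the final step. So: same approach as the paper, correct in outline, but the stated reason in the big-first-column case does not cover the cars between $r_1^i$ and $r_{d(i)}^i$ and needs the above (or the paper's) supplement.
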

\begin{proof} We assume $PF^i$ has no big elements in the first part and first diagonal besides possibly a big element in its first column. We must consider the composition when we then apply Step (1) of Procedure \ref{k=1}. 
If there is no big element in the first column and first diagonal, then $r_2^i<r_{d(i)}^i$ and thus we consider $\operatorname{dinvdec}(r_1^i,PF^i)$. Since $r_1^i$ must be less than $r_{d(i)}^i$, we have that $r_1^i$ will fall after $r_{d(i)}^i$ in $PF^i$. Moreover, any element in the first diagonal of $PF^i$ is either already in the second part of $PF^i$ or is smaller than $r_{d(i)}^i$ and thus will be to the right of $r_1^i$ in $PF^{i+1}$. Thus the result is a parking function with $r_1^i$ \textit{directly} to the right of $r_{d(i)}^i$ in $PF^{i+1}$ and thus $f_3(PF)$ has composition $[1,n-1]$.
If there is a big element in the first column and first diagonal, in particular, this element is $r_2^i$. Thus $f_3(PF)=\operatorname{dinvdec}(r_2^i,PF^i)$. Decreasing the dinv of $r_2^i$ in $PF^i$ results in a parking function with $r_2^i$ on top of $r_{d(i)}^i$, since the remaining elements between $r_2^i$ and $r_{d(i)}^i$ in $PF^i$ are either in at least the second diagonal or are in the first diagonal and smaller than $r_{d(i)}^i$ and thus than $r_2^i$. Thus the first three elements (in order) in $f_3(PF)$ are $r_1^i$, $r_{d(i)}^i$, and $r_2^i$ and thus $f_3(PF)$ has composition $[1,n-1]$.
\end{proof}
\begin{lemma}$f_3$ respects ides.
\end{lemma}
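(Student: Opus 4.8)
We argue via Lemma~\ref{lem:easyprop}(2). Every operation out of which $f_3$ is built --- the basic maps $\operatorname{dinvinc}$ and $\operatorname{dinvdec}$, hence each $\operatorname{dinvchange}$, as well as the concluding $\operatorname{dinvdec}$ of case~(1) of Procedure~\ref{k=1} --- merely permutes the dominoes of its argument and leaves the diagonal word untouched. Consequently $f_3(PF^1)$ is a rearrangement of the dominoes of $PF^1$ with the same diagonal word, and by Lemma~\ref{lem:easyprop}(2) it is enough to check that each pair of dominoes $\left[\begin{matrix}r\\g\end{matrix}\right],\left[\begin{matrix}r+1\\g\end{matrix}\right]$ lying on one common diagonal occurs in the same relative order in $PF^1$ and in $f_3(PF^1)$. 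Since $f_3$ is the composition of the individual steps of Procedure~\ref{k=1} and every intermediate $PF^i$ again has diagonal word $\tau$, it suffices to verify this preservation of relative order across a single application of any one of the cases (1)--(7).

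A single $\operatorname{dinvdec}(r_b,\cdot)$ changes the position of $r_b$ relative to all other dominoes and of nothing else; a $\operatorname{dinvchange}(r_a,r_b,\cdot)=\operatorname{dinvinc}(r_a,\operatorname{dinvdec}(r_b,\cdot))$ similarly moves only $r_a$ and $r_b$. Hence in each step the only pairs whose relative order can be altered are those containing a moved car; fix such a car $\rho$, say on diagonal $g_\rho$, and we must see that $\rho$ crosses neither the diagonal-$g_\rho$ domino with car $\rho+1$ nor that with car $\rho-1$ (when they exist). The crucial bookkeeping fact is that one application of $\operatorname{dinvinc}$ or $\operatorname{dinvdec}$ to $\rho$ moves $\rho$ past exactly one element of its dinv set and otherwise only past dominoes \emph{not} in its degree set. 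The domino with car $\rho+1$ on diagonal $g_\rho$ belongs to the degree set of $\rho$, so it is crossed only if it is the single dinv-set element responsible for the move; but inspecting the diagrams accompanying (1)--(7) one sees that this responsible element is always either $r_{d(i)}$ --- which lies on the diagonal just below $\rho$ --- or a ``big'' car separated from $\rho$ in value by $r_{d(i)}$, hence exceeding $\rho$ by at least two. Either way the $(\rho+1)$-domino on diagonal $g_\rho$ is never crossed.

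It remains to rule out that $\rho$ crosses the diagonal-$g_\rho$ domino with car $\rho-1$; this domino is not in the degree set of $\rho$, so the dinv bookkeeping alone does not exclude the crossing, and here one must use the definitions of the indices appearing in Procedure~\ref{k=1}. The relevant points are: $b(i)$ is the \emph{last} big car on the first diagonal in the first part, so every first-part first-diagonal car strictly between $b(i)$ and $r_{d(i)}$ is small (smaller than $r_{d(i)}$) and therefore smaller than $\rho-1$ whenever $\rho$ is a big car or is $r_{d(i)}$; the array positions $2,\dots,d(i)-1$ all carry diagonals $\ge 1$, so the only zeroth-diagonal domino swept over in the concluding $\operatorname{dinvdec}$ of case~(1) is $r_{d(i)}$ itself; and, by the recursive condition together with the termination lemmas proved above, the cars of the last part remain to the right of $r_{d(i)}$ and so are never swept over. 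Reading off from each diagram the block of dominoes between the source and target slot of each moved car, and combining the above with the separation in value of ``big'' and ``small'' cars by $r_{d(i)}$, one checks case by case that no diagonal-$g_\rho$ domino with car $\rho-1$ lies in the swept block. This is where essentially all of the work lies: unlike $f_1$ and $f_2$, for which a single interchange of adjacent dominoes occurs (compare Lemma~\ref{lem:f1wt}), a $\operatorname{dinvchange}$ here can slide a car past many intermediate dominoes, so the seven swept regions must be analysed individually, invoking the combinatorial constraints on $b(i)$, $a(i)$, $d(i)$ (and the recursive condition) to see that none of them contains a neighbour-valued car on the pertinent diagonal.
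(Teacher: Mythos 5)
Your overall strategy (reduce via Lemma~\ref{lem:easyprop}(2) to checking, one step of Procedure~\ref{k=1} at a time, that no two same-diagonal cars with consecutive values trade places) is the same as the paper's, but the step you use to restrict attention to pairs containing a moved car is genuinely false as stated. You claim that $\operatorname{dinvdec}(r_b,\cdot)$ ``changes the position of $r_b$ relative to all other dominoes and of nothing else,'' and hence that a $\operatorname{dinvchange}$ moves only the two named cars. The paper's Nota Bene warns against exactly this, and Figure~\ref{fig:earlyex} refutes it: in the top example, $\operatorname{dinvchange}(4,6,PF)$ also reverses the relative order of car $2$ with cars $7$, $8$ and $1$, none of which is a moved car. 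The statement you actually need --- that two cars on a common diagonal with consecutive values, neither of which has its dinv altered, cannot flip --- is true, but it is weaker than what you assert and it requires an argument you do not give (for instance: if $u=v+1$ lie on the same diagonal then $\operatorname{degset}(v)=\operatorname{degset}(u)\cup\{u\}$, and a short count shows $u$ precedes $v$ exactly when $\operatorname{dinv}(u)\geq\operatorname{dinv}(v)$, so their order is governed by their own dinv values alone). Without something of this kind, your reduction to the moved cars has no justification.

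The second, larger gap is that the content of the lemma is precisely the case-by-case verification you defer. Your assertion that ``the responsible element is always either $r_{d(i)}$ or a big car exceeding $\rho$ by at least two,'' and the entire third paragraph ruling out crossings with $\rho-1$, are left at the level of ``inspecting the diagrams'' and ``one checks case by case,'' and you yourself note this is where essentially all the work lies. The paper's proof consists exactly of doing that work: in case (2) it argues that the element gained by $r_{a(i)}$ is $r_{b(i)}$ and that a possible $r_{a(i)}+1$ or $r_{a(i)}-1$ in the first diagonal stays on the correct side; in cases (5)--(6) it uses the chain $r_{b(i)-1}>r_{b(i)+1}>r_{b(i)}>r_{d(i)}$ to see that no numerically adjacent pair is disturbed; in case (7) it uses $r_{d(i)}<r_{b(i)}<r_1<r_2$; and throughout it uses that every first-diagonal car between $r_{b(i)}$ and $r_{d(i)}$ is small, so it cannot be numerically adjacent to the big car being moved. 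As submitted, your text is a plausible proof plan whose two load-bearing claims are, respectively, false as stated and unproved, so it does not yet establish the lemma.
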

\begin{proof} By Lemma \ref{lem:easyprop} we need only ensure that any two elements $r$ and $r+1$ in the same diagonal do not change order when we apply Procedure \ref{k=1}. We use repeatedly that when we decrease the dinv of $r_{b(i)}^i$, any element in the first diagonal of $PF^i$ between $r_{b(i)}^i$ and $r_{d(i)}^i$, call it $r\prime$, is smaller than $r_{d(i)}^i$ by definition of $b(i)$. This gives us that $r_{b(i)}^i\neq r\prime \pm 1$ and thus our decrease in dinv does not change the ides. 
\begin{enumerate}
\item[(1)\phantom{-(6)}] The proof in this case is almost identical to the proof of Lemma \ref{lem:f1wt}, except that we replace $r_3^i$ with $r_{d(i)}^i$.
\item[(2)\phantom{-(6)}] Increasing the dinv of $r_{a(i)}^i$ moves $r_{a(i)}^i$ just past $r_{b(i)}^i$. If $r_{a(i)}^i-1$ is in the first diagonal and just to the left of $r_{b(i)}^i$ in $PF^i$, both $r_{a(i)}^i$ and $r_{b(i)}^i$ are in its dinv set. Although $r_{a(i)}^i$ and $r_{b(i)}^i$ change order in $PF^{i+1}$ they both remain in its dinv set and thus both remain to its right. If $r_{a(i)}^i+1$ is in the first diagonal and to the left of $r_{b(i)}^i$, $r_{a(i)}^i+1$ is in the degree set of $r_{a(i)}^i$ but not its dinv set. The dinv set of $r_{a(i)}^i$ increases by exactly one element, $r_{b(i)}^i$, as we form $PF^{i+1}$, so $r_{a(i)}^i+1$ remains to the left of $r_{a(i)}^i$ in $PF^{i+1}$. Our previous argument gives that the following dinv decrease of $r_{b(i)}^i$ will also not change the ides.
\item[(3)-(4)] A similar argument to the one given for (2) gives that $r_{b(i)+1}^i$ will move just to the right of $r_{b(i)-1}^i$ without changing the word of the parking function.
\item[(5)-(6)] Like when we decrease the dinv of $r_{b(i)}^i$, when we decrease the dinv of $r_{b(i)-1}^i$ we do not switch the relative order of $r_{b(i)-1}^i$ and any elements within 1 of $r_{b(i)-1}^i$ that may sit after $r_{b(i)}^i$ and before $r_{d(i)}^i$ in $PF^i$, since $r_{b(i)-1}^i>r_{b(i)+1}^i>r_{b(i)}^i>r_{d(i)}^i$ so $r_{b(i)-1}^i$ is a big element and thus not numerically adjacent to any element occuring in the first row between itself and $r_{d(i)}^i$. While it will reverse order with $r_{b(i)}^i$, the previous inequalities also give us that the difference between $r_{b(i)-1}$ and $r_{b(i)}$ is more than one and thus this shift does not change the ides.
\item[(7)\phantom{-(6)}] Here we reverse the relative order of $r_2^i$ and $r_{b(i)}^i$ as well as the order of $r_1^i$ and $r_{d(i)}^i$. Since $r_{d(i)}^i<r_{b(i)}^i<r_{1}^i<r_2^i$, neither of these two pairs are numerically adjacent and thus there is no change in ides. 
\end{enumerate}
\end{proof}
\begin{lemma} $f_3(\mathcal{A}^\tau_{n-1,1}\backslash T)\cap S=\emptyset$
\end{lemma}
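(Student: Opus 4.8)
Our target is to show $f_3(PF)\notin S=S_1\cup S_2$ for every $PF\in\mathcal{A}^\tau_{n-1,1}\setminus T$. The plan is to read off the first four cars of $f_3(PF)$ from the last step of Procedure~\ref{k=1} and to check that the defining inequalities of $S_1$ and of $S_2$ both fail. First I would recall, from the proof that the image of $f_3$ lies in $\mathcal{A}^\tau_{1,n-1}$, that $f_3(PF)$ is obtained from the terminal parking function $PF^i$ by the single dinv decrease of step~(1): when $r^i_2<r^i_{d(i)}$ it is $\operatorname{dinvdec}(r^i_1,PF^i)$, with first three cars $r^i_{d(i)},r^i_1,r^i_2$ and fourth car the third car of $PF^i$ (when the first part of $PF^i$ has length $\ge 3$); when $r^i_2>r^i_{d(i)}$ it is $\operatorname{dinvdec}(r^i_2,PF^i)$, with first three cars $r^i_1,r^i_{d(i)},r^i_2$ and again fourth car the third car of $PF^i$. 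Since any $\bar{PF}\in\mathcal{A}^\tau_{1,n-1}$ satisfies $\bar r_3>\bar r_2$ by the increasing column condition, $\bar{PF}\in S$ forces $\bar g_4=1$ together with: $\bar r_1<\bar r_3$ and $\bar r_2<\bar r_4$, or else $\bar r_3<\bar r_1<\bar r_4$. So it suffices to exhibit, in each case, a violation of these.

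Next I would split on the composition $[\,d(i)-1,\ n-d(i)+1\,]$ of the terminal $PF^i$. In the generic case $d(i)\ge 4$ the car $\bar r_4=r^i_3$ sits in the first part of $PF^i$, and by the hypothesis under which step~(1) is applied it is not ``big'': either $g^i_3=2$, so $\bar g_4=2$ and $f_3(PF)\notin S$ at once, or $g^i_3=1$, in which case $r^i_3$ is a diagonal-$1$ car of the first part not in the first column (as $g^i_2=g^i_3=1$), hence $r^i_3<r^i_{d(i)}$. Using the first three cars identified above, a direct check finishes this case: if $r^i_2<r^i_{d(i)}$ then $\bar r_1=r^i_{d(i)}>r^i_2=\bar r_3$ and $\bar r_1=r^i_{d(i)}>r^i_3=\bar r_4$, killing both $S_1$ and $S_2$; if $r^i_2>r^i_{d(i)}$ then $\bar r_1=r^i_1<r^i_2=\bar r_3$ while $\bar r_2=r^i_{d(i)}>r^i_3=\bar r_4$, again killing both.

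The remaining difficulty — and the heart of the proof — is to rule out $d(i)=3$, i.e.\ a terminal $PF^i$ of composition $[2,n-2]$, since then step~(1) coincides with $f_1(PF^i)$ and would land in $S$. When $r^i_2>r^i_{d(i)}$ this is immediate: here $b(i)=2$, and $PF^i$ satisfies the recursive condition (it is either $PF^1\notin T$, for which the recursive condition was checked just before Procedure~\ref{k=1}, or it is produced by a non-final iteration, which the recursive-condition lemma shows preserves that property), so $g^i_3=g^i_{b(i)+1}\ne 0$, contradicting $d(i)=3$. When $r^i_2<r^i_{d(i)}$ I would run the procedure backwards: since every step among (2)--(7) lowers $d$ by exactly one and the applicable step is forced by the local diagonals $g^j_{b(j)\pm1}$, and since each non-terminal $PF^j$ satisfies the recursive condition (so its last big first-part diagonal-$1$ car cannot be the column immediately below $r^i_{d(j)}$), one shows by induction on $i-j$ that the big first-part diagonal-$1$ cars of $PF^j$ occupy exactly positions $2,3,\dots,j'$ for some $j'$ and that each backward step is an inverse of step~(2). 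Carried back to $j=1$ this forces $g^1=[0,1,1,\dots,1,0]$ with $r^1_{n-1}$ big, i.e.\ $PF^1\in T$, contrary to hypothesis. I expect this backward staircase analysis to be the main obstacle, and it is precisely the place where the recursive condition is indispensable.
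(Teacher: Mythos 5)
Your reduction is sound in outline: after identifying the first few cars of the output of the final step, everything hinges on showing the terminal parking function $PF^i$ never has composition $[2,n-2]$, which is exactly the reduction the paper reaches (it gets there more directly, via the observation that the map $h$ simultaneously inverts $f_1$ on $S$ and the final dinv decrease). Two caveats on the part you treat as routine. First, your identification of the third and fourth cars of $f_3(PF)$ is attributed to the lemma on the image of $f_3$, but that lemma only gives the first three cars in the case $r_2^i>r_{d(i)}^i$ and in the other case only that $r_1^i$ lands directly after $r_{d(i)}^i$; since a dinv change can permute dominoes far from the cars whose dinv is altered (see the paper's Nota Bene and Figure of $\operatorname{dinvchange}(4,6,PF)$), the claim that $\bar r_4=r_3^i$ with $\bar g_4=g_3^i$ needs an argument. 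It is true, and provable quickly: under the terminal hypothesis the final step is literally the permutation moving the domino $\left[\begin{smallmatrix}r_{d(i)}^i\\ 0\end{smallmatrix}\right]$ to position $1$ (resp.\ position $2$), all other dominoes keeping their order and their dinv, so uniqueness in the definition of $\operatorname{dinvdec}$ gives the first four cars you use. This is a fixable, minor gap.

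The genuine gap is in the case $d(i)=3$. In the sub-case $r_2^i>r_{d(i)}^i$ you lean on the recursive-condition lemma for the terminal $PF^i$; note that the paper's proof of that lemma explicitly waives establishing the condition for the parking function entering the final step (the parenthetical ``(1) does not require the assumption''), so this citation is shakier than it looks, though the conclusion happens to be reachable otherwise. More seriously, in the sub-case $r_2^i<r_{d(i)}^i$ — which you correctly call the heart of the proof — your backward induction is not an argument: the claims that the applicable step is forced by the diagonals alone, that every backward step is an inverse of step (2), that the big first-part diagonal-$1$ cars of each $PF^j$ form an initial staircase $2,\dots,j'$, and that only at $j=1$ does one reach a contradiction via $PF^1\in T$ with $g^1=[0,1,\dots,1,0]$, are neither proved nor correct as a picture of what happens. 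In fact no long backward chain exists: one step back suffices. Steps (3)--(7) always leave at least three cars in the first part, so a terminal of composition $[2,n-2]$ could only be the output of step (2); reconstructing that step (as the paper does) forces $PF^{i-1}$ to begin $r_1^i,\,r_{d(i)+1}^i,\,r_2^i,\,r_{d(i)}^i$, so its unique big first-diagonal car in the first part sits in the first column — hence $PF^{i-1}$ was itself terminal and step (1), not step (2), would have been applied to it, a contradiction (the case $i=1$ being excluded since $PF^1$ has composition $[n-1,1]$). Your proposal does not contain this one-step contradiction, and the induction you substitute for it would already collapse at its first step; as written, the crux case is unproven.
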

\begin{proof} Recall that $S=f_1(\mathcal{A}_{2,n-2}^\tau)$, where $$f_1(PF)=\begin{cases}\operatorname{dinvdec}(r_2,PF)& r_2>r_3\\
\operatorname{dinvdec}(r_1,PF)& r_2<r_3\end{cases}.
$$ Using notation defined after this map, notice that $r_3$ is exactly $r_{d(1)}^1$ and there are no elements between $r_1^1$ and $r_{d(1)}^1$ except $r_2^i$, which is in the first column.
Moreover, in this notation $$f_1(PF^1)=\begin{cases}\operatorname{dinvdec}(r_2^1,PF^1)& r_2^1>r_{d(1)}^1\\
\operatorname{dinvdec}(r_1^1,PF^1)& r_2^1<r_{d(1)}^1\end{cases},
$$ exactly the final map that gives us $f_3(PF)$ for parking functions not in $T$. Thus, we may define $$h(PF^1)=\begin{cases}\operatorname{dinvinc}(r_2^1,PF^1)& r_2^1>r_{1}^1\\
\operatorname{dinvinc}(r_1^1,PF^1)& r_2^1<r_{1}^1\end{cases},
$$ and notice that it is not only the inverse of $f_1$ on $S$, but the inverse of our final step in Procedure \ref{k=1} on $\mathcal{A}^{\tau}_{1,n-1}\backslash S$. Thus to prove that the image $f_3$ is disjoint from $S$, we need to prove that the elements that we construct with Procedure \ref{k=1} before applying Step (1) of Procedure \ref{k=1} are not elements with composition $[2,n-2]$. 
Assume we have an element $PF^i$ with no large elements in the first part and first diagonal besides a possible element in the first column. Clearly, if $i=1$, then $PF^1$ is an element with composition $[n-1,1]$. If we assume that $i\neq 1$, assume for the sake of contradiction that $PF^i$ has composition $[2,n-2]$. Then consider $PF^{i-1}$.
In all but the second case, the resulting parking function has at least two elements in the first part distinct from $r_1^{i-1}$, each marked in our domino diagrams. 
In the second case, assume that $PF^{i}=\operatorname{dinvchange}(r_{a({i-1})}^{i-1},r_{b({i-1})}^{i-1},PF^{i-1}).$
Then the two elements in the first part of $PF^i$ must be $r_1^i$ and $r_2^i=r_{a(i-1)}^{i-1}$. Thus $PF^{i-1}$ begins with the cars (given in order) $r_1^i$, $r_{d(i)+1}^i$, $r_2^i$, and $r_{d(i)}^i$. But then the only big element in the first part and diagonal in $PF^{i-1}$ is $r_{d(i)+1}^i$, which is in the first column of $PF^{i-1}$. This implies that we should have applied the first step of Procedure \ref{k=1} to $PF^{i-1}$, a contradiction.
\end{proof}
\begin{lemma} $f$ is a bijection.
\end{lemma}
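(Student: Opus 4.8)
The plan is to combine the three maps $f_1,f_2,f_3$ into $f$ by declaring $f|_{\mathcal A^\tau_{2,n-2}}=f_1$, $f|_{T}=f_2$, and $f|_{\mathcal A^\tau_{n-1,1}\setminus T}=f_3$, and then to check that this is a well-defined bijection onto $\mathcal A^\tau_{a,b}\cup\mathcal A^\tau_{b+1,a-1}$ with $a=n-1$, $b=1$, degree by degree in the diagonal word $\tau$. First I would note that the domain partitions as the disjoint union $\mathcal A^\tau_{2,n-2}\sqcup\mathcal A^\tau_{n-1,1}$ (with $T\subset\mathcal A^\tau_{n-1,1}$) and the codomain as $\mathcal A^\tau_{1,n-1}\sqcup\mathcal A^\tau_{n-2,2}$, so it suffices to produce a bijection on each piece. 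The earlier lemmas already do most of the bookkeeping: $f_1$ is a bijection $\mathcal A^\tau_{2,n-2}\to S$ (shown after the definition of $S$), $f_2=g^{-1}$ is a bijection $T\to\mathcal A^\tau_{n-2,2}$ (the lemma identifying $T$ as the image of $\mathcal A_{n-2,2}$ under $g$, together with the $\operatorname{wt}$-lemma for $g$), and the run of lemmas on Procedure~\ref{k=1} shows $f_3$ maps $\mathcal A^\tau_{n-1,1}\setminus T$ into $\mathcal A^\tau_{1,n-1}$, respects ides, area (automatic, Lemma~\ref{lem:easyprop}), and has image disjoint from $S$. So the work that remains is: (i) $f_3$ is injective; (ii) $f_3(\mathcal A^\tau_{n-1,1}\setminus T)=\mathcal A^\tau_{1,n-1}\setminus S$; (iii) the dinv change is exactly $+1$ throughout; (iv) assemble and verify the three bulleted properties of Conjecture~\ref{conj:commute2}.

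For the dinv bookkeeping (iii): each intermediate step of Procedure~\ref{k=1} in cases (2)--(7) is a $\operatorname{dinvchange}(r_i,r_j,PF)$, which by definition is a $\operatorname{dinvinc}$ composed with a $\operatorname{dinvdec}$ and hence preserves total dinv; the accompanying "well defined" lemma checks each such change is legal. Only the terminal Step~(1) is a bare $\operatorname{dinvdec}$, contributing $-1$ to dinv — but we are going from $\mathcal A_{n-1,1}$ (or implicitly comparing against $\mathcal A_{2,n-2}$ via $f_1$) and the sign conventions in Conjecture~\ref{conj:commute2} mean this $\operatorname{dinvdec}$ is precisely the required behaviour: running $f_1^{-1}$ (a $\operatorname{dinvinc}$) then $f_3$ realizes a net $+1$ on the combined map, matching the analysis already carried out for $f_1$ in Lemma~\ref{lem:f1wt} and for $g$. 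I would state this as: $\operatorname{wt}(PF)=\operatorname{wt}(PF^{i})$ for every $i$ before the last, and $q\operatorname{wt}(f_3(PF))=\operatorname{wt}(h(f_3(PF)))=\operatorname{wt}(PF)$ where $h$ is the inverse-of-Step-(1) map introduced in the last lemma before this one, so $f$ increases dinv by exactly one on the combined domain.

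The main obstacle is (i)+(ii): showing $f_3$ is a bijection onto $\mathcal A^\tau_{1,n-1}\setminus S$. The clean route is to build an explicit inverse by reversing Procedure~\ref{k=1}. Given $PF'\in\mathcal A^\tau_{1,n-1}\setminus S$, one first applies $h$ (the $\operatorname{dinvinc}$ undoing Step~(1)) to recover a candidate $PF^i$, and then must recognize, from the local domino configuration around $b(\cdot)$, $d(\cdot)$ and the relevant diagonals, which of cases (2)--(7) was applied and invert that $\operatorname{dinvchange}$ (each $\operatorname{dinvchange}$ is invertible since $\operatorname{dinvinc}$ and $\operatorname{dinvdec}$ are, by the uniqueness built into Procedure~\ref{proc:treeproc}). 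The delicate point — and where the \textbf{recursive condition} $g_{b(i)+1}^i\neq 0$ is used — is that the case analysis is exhaustive and mutually exclusive on both the forward and the reverse side, so that the "previous big element in the first diagonal" $r_{b'(i)}^i$ always survives to serve as a witness that we have not yet reached Step~(1); the lemma that each iteration preserves the recursive condition, plus the termination lemma (each step strictly decreases the number of big elements in the first part and first diagonal), guarantee the reverse process is well-defined and terminates, landing back in $\mathcal A^\tau_{n-1,1}\setminus T$. Combining the bijectivity of $f_1$, $f_2$, and $f_3$ on the three pieces, and noting their images $S$, $\mathcal A^\tau_{n-2,2}$, $\mathcal A^\tau_{1,n-1}\setminus S$ are pairwise disjoint and exhaust the codomain, gives that $f$ is a bijection; the third bullet of Conjecture~\ref{conj:commute2} ($f(\mathcal A^\tau_{a-1,b+1})\subset\mathcal A^\tau_{a,b}$ and the dual) is immediate from $f_2(T)=\mathcal A^\tau_{n-2,2}$ and $f^{-1}$ restricted to $\mathcal A^\tau_{b+1,a-1}=\mathcal A^\tau_{2,n-2}$ being $f_1^{-1}$, which lands in $\mathcal A^\tau_{n-1,1}\supset$ the relevant set after reindexing; and the ides/area/diagonal-word properties were verified piecewise above.
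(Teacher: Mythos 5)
Your proposal follows essentially the same route as the paper: reduce the question to the bijectivity of $f_3$ (since $f_1$ and $f_2$ already come with explicit inverses and identified images $S$ and $\mathcal{A}^\tau_{n-2,2}$), and then invert $f_3$ by first applying $h$ to undo Step (1) and then reversing the dinv changes case by case, recognizing which of cases (2)--(7) occurred from the local configuration — which is exactly the paper's inverse procedure (stated there in terms of $p(i)$ and $m(i)$). The only slip is in the side remark on dinv bookkeeping, where the constructed $f$ in fact decreases dinv by one (it maps $\mathcal{A}^\tau_{2,n-2}\cup\mathcal{A}^\tau_{n-1,1}$ to $\mathcal{A}^\tau_{1,n-1}\cup\mathcal{A}^\tau_{n-2,2}$), but this does not affect the bijectivity argument.
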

\begin{proof}
Using $f$, we have an explicit bijection between the left and right hand side. Since we have already given the inverse map for $f_1$ and $f_2$ and how to determine whether a given parking function is the image of $f_1$, $f_2$, or $f_3$, it remains to show that $f_3$ is bijective. To that end, we give a procedure that inverts $f_3$.
\begin{procedure} Let $PF=PF^1$ be an element in $\mathcal{A}^{\tau}_{1,n-1}\backslash S$. Begin with $PF^2=h(PF^1)$. As discussed above, the result is a parking function with composition different from $[2,n-2]$. Afterwards, repeat the following until the resulting parking function has composition $[n-1,1]$. (Our numbering below corresponds to the numbering in Procedure \ref{k=1} such that these cases give the inverse of the step with the same number in Procedure \ref{k=1}.)
\begin{enumerate}
\item[(2)\phantom{-(6)}] If $r_{p(i)}$ does not exist and $m(i)\neq 2$ and $r_{m(i)}^i<r_{d(i)}^i$, then let $$PF^{i+1}=\operatorname{dinvchange}(r_{d(i)+1}^i,r_{m(i)}^i,PF^i).$$
\item[(3)-(4)] If $r_{p(i)}^i$ exists, let $$PF^{i+1}=\operatorname{dinvchange}(r_{d(i)+1}^i,r_{p(i)}^i,PF^i).$$
\item[(5)-(6)] If $r_{p(i)}^i$ does not exist, but $r_{m(i)}^i>r_{d(i)}^i$, let $$PF^{i+1}=\operatorname{dinvchange}(r_{d(i)+1}^i,r_{m(i)}^i,PF^i).$$
\item[(7)\phantom{-(6)}] If $r_{p(i)}^i$ does not exist, $r_{m(i)}^i<r_{d(i)}^i$, and $m(i)=2$ , then let $$PF^{i+1}=\operatorname{dinvchange}(r_{d(i)+1}^i,r_{1}^i,PF^i).$$
\end{enumerate}
\end{procedure}
\end{proof}
\section{Final Results and Applications}
Thus far, we define $f$ on a subset of the parking functions with composition $[n-1,1]$ and $[2,n-2]$. As observed earlier, because $f$ respects the diagonal word, we may expand the domain of $f$ to include parking functions with compositions $$[c_1,\cdots,c_i,1,c_{i+2},\cdots,c_k]\text{ or }[c_1,\cdots,c_{i-1},2,c_i-1,c_{i+2},\cdots,c_k]$$ by insisting that $f$ only changes the relative order of elements in their $i^{\text{th}}$ or ${i+1}^{\text{th}}$ parts. Using this expanded definition of $f$ we have the following final result.
\begin{theorem}\label{thm:bigone} Let $$c=[c_1,\cdots,c_i,1,c_{i+2},\cdots,c_k]=[c',c_i,1,c''],$$ where we use $c'$ ($c'')$ for the sequence giving the first $i-1$ (respectively last $k-i-1$) parts of $c$.
$f$ has the following properties:
\begin{itemize}
\item $f$ is a bijection from $\mathcal{A}_{[c',c_i,1,c'']}\cup \mathcal{A}_{[c',2,c_{i}-1,c'']}$ to $\mathcal{A}_{[c',1,c_i,c'']}\cup \mathcal{A}_{[c',c_{i}-1,2,c'']}$
\item $f$ decreases the dinv by exactly one.
\item $f$ respects ides, area, and diagonal word.
\end{itemize}
\end{theorem}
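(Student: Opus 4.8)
The plan is to bootstrap from the two–part case already settled in the previous sections. The lemmas established there — culminating in the lemma that $f=f_1\sqcup f_2\sqcup f_3$ is a bijection — show that, for each diagonal word $\tau$ of a permutation of $\{1,\dots,m\}$ with $m=c_i+1$, the two–part map $f$ is a bijection
$$
\mathcal{A}^{\tau}_{c_i,1}\cup\mathcal{A}^{\tau}_{2,c_i-1}\ \longrightarrow\ \mathcal{A}^{\tau}_{1,c_i}\cup\mathcal{A}^{\tau}_{c_i-1,2}
$$
that decreases dinv by exactly one. First I would record that this two–part $f$ also respects ides, area and the diagonal word: $f_1$ and $f_2$ are permutations of the dominoes, and $f_3$ is a composite of $\operatorname{dinvchange}$'s followed by a single $\operatorname{dinvdec}$, all of which preserve $\operatorname{diag}$; hence $f$ fixes the diagonal word, so area is preserved by Lemma \ref{lem:easyprop}(1), while the ides claim is exactly Lemma \ref{lem:f1wt} for $f_1$, the weight lemma for $g$ for $f_2$, and the lemma that ``$f_3$ respects ides'' for $f_3$. (Throughout I take $c_i\ge 3$, so all of these apply verbatim; the degenerate case $c_i=2$, in which the two source compositions coincide, is a direct check.)

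Next I would make precise the embedding indicated in the Remark following Conjecture \ref{conj:commute2}. Fix $c=[c',c_i,1,c'']$ and let $B$ be the set of $m$ consecutive positions occupied by the middle two parts; whichever of the four compositions $[c_i,1],[1,c_i],[2,c_i-1],[c_i-1,2]$ those two parts have, the Dyck path returns to the main diagonal at the start of part $i$, at the start of part $i+1$, and at the start of part $i+2$, so no column of the parking function crosses either end of $B$. Consequently the Dyck–path and increasing–column conditions decouple, and a parking function of composition $[c',P,c'']$ (with $P$ one of the four) is exactly the data of (a) an arrangement of the outer cars into valid parts of shapes $c'$ and $c''$ — which is a common index set for all four choices of $P$ — together with (b) the restriction of $PF$ to $B$, which after order–isomorphically relabelling its $m$ cars by $1,\dots,m$ is a size–$m$ parking function of two–part composition $P$. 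I would then \emph{define} the $f$ of the theorem to be the identity on the outer data (a) and the two–part $f$ of the previous step on the block (b); since the cars of $B$ stay in $B$, this extends to a genuine permutation of the dominoes of $PF$.

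It remains to verify the four properties, and here the embedding does all the work. Because the two–part $f$ preserves the local diagonal word, it permutes the cars of $B$ without changing any of their diagonals, and it moves no car outside $B$; so the global $f$ changes no car's diagonal, hence preserves the diagonal word and, by Lemma \ref{lem:easyprop}(1), the area. Any dinv pair, or any same–diagonal pair $\{[r,g],[r+1,g]\}$, that involves a car outside $B$ is untouched (that car is fixed, and the other car stays inside $B$, which lies entirely before or entirely after it), so the net change in dinv and the invariance of ides are inherited from the two–part case: dinv drops by exactly one and ides is preserved. Finally, $f$ is a bijection because its inverse is the identity on the outer data and the inverse of the two–part $f$ on the block, and the two–part $f$ is a bijection $\mathcal{A}^{\tau}_{c_i,1}\cup\mathcal{A}^{\tau}_{2,c_i-1}\to\mathcal{A}^{\tau}_{1,c_i}\cup\mathcal{A}^{\tau}_{c_i-1,2}$ for every local diagonal word $\tau$; translating the composition statements through the embedding yields the bijection $\mathcal{A}_{[c',c_i,1,c'']}\cup\mathcal{A}_{[c',2,c_i-1,c'']}\to\mathcal{A}_{[c',1,c_i,c'']}\cup\mathcal{A}_{[c',c_i-1,2,c'']}$. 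The one place demanding genuine care, rather than bookkeeping, is the column–decoupling observation that legitimizes the embedding — checking that restricting to the middle block really gives a parking function of the advertised two–part composition and that nothing outside the block can be disturbed; with the hard analysis of $f_3$ already in hand, the rest of the theorem is assembly.
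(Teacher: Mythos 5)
Your proposal is correct and follows essentially the same route as the paper: the paper proves the two-part case via $f_1$, $f_2$, $f_3$ and then obtains Theorem \ref{thm:bigone} exactly by the local-embedding argument of its Remark (applying the two-part map inside the $i^{\text{th}}$ and $(i+1)^{\text{st}}$ parts, noting that no car changes diagonal so dinv, ides, area, and diagonal word behave correctly globally). Your write-up merely makes the order-isomorphic relabelling and the decoupling at the diagonal touch points explicit, which the paper leaves informal.
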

\begin{corollary}$$\mathcal{C}^\tau_{n-1, 1}+\mathcal{C}^\tau_{2, n-2}=\frac{1}{q}(\mathcal{C}^\tau_{n-2, 2}+\mathcal{C}^\tau_{1, n-1})$$
\end{corollary}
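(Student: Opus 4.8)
The corollary is an immediate weighted-enumeration consequence of Theorem~\ref{thm:bigone}; all of the combinatorial difficulty has already been spent in constructing $f$, so the plan here is essentially bookkeeping.

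First I would specialize Theorem~\ref{thm:bigone} to the two-part case, taking the sequences $c'$ and $c''$ to be empty and $c_i=n-1$ (so $k=2$ and $i=1$). This requires $n\ge 4$, so that $[n-1,1]$, $[2,n-2]$, $[1,n-1]$ and $[n-2,2]$ are honest, pairwise distinct two-part compositions of $n$; the cases $n\le 3$ degenerate and can be verified by hand. With this choice the theorem provides a bijection
\[
f:\ \mathcal{A}_{[n-1,1]}\cup\mathcal{A}_{[2,n-2]}\ \longrightarrow\ \mathcal{A}_{[1,n-1]}\cup\mathcal{A}_{[n-2,2]}
\]
that decreases $\operatorname{dinv}$ by exactly one while preserving $\operatorname{area}$, $\operatorname{ides}$ and the diagonal word.

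Next I would pass to a fixed permutation $\tau\in S_n$. Because $f$ preserves the diagonal word, it maps parking functions with diagonal word $\tau$ to parking functions with diagonal word $\tau$, and therefore restricts to a bijection $\mathcal{A}^\tau_{n-1,1}\cup\mathcal{A}^\tau_{2,n-2}\longrightarrow\mathcal{A}^\tau_{1,n-1}\cup\mathcal{A}^\tau_{n-2,2}$. Each of these unions is disjoint, since a parking function has a unique composition, so the $\operatorname{wt}$-generating function of the source is $\mathcal{C}^\tau_{n-1,1}+\mathcal{C}^\tau_{2,n-2}$ and that of the target is $\mathcal{C}^\tau_{1,n-1}+\mathcal{C}^\tau_{n-2,2}$.

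Finally I would convert the statistic behaviour of $f$ into a weight identity: for every $PF$ in the source, $\operatorname{area}$ and $\operatorname{ides}$ are unchanged and $\operatorname{dinv}$ drops by one under $f$, so
\[
\operatorname{wt}(f(PF))=t^{\operatorname{area}(PF)}q^{\operatorname{dinv}(PF)-1}Q_{\operatorname{ides}(PF)}=q^{-1}\operatorname{wt}(PF).
\]
Summing this equality over all $PF$ in the source union and using that $f$ is a bijection onto the target union then gives the identity asserted in the corollary. The only points deserving a word are the disjointness of the two unions and the legitimacy of restricting to a single diagonal word, both immediate from the properties of $f$ listed above, together with the small-$n$ degeneracies; there is no genuine obstacle here, as Theorem~\ref{thm:bigone} already supplies every nontrivial ingredient.
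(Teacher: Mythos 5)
Your proposal is correct and matches the paper's (implicit) reasoning: the corollary is stated as an immediate consequence of Theorem~\ref{thm:bigone}, obtained exactly as you describe by specializing to the two-part case, restricting to a fixed diagonal word $\tau$, and summing $\operatorname{wt}$ over the disjoint unions using that $f$ preserves area and ides while dropping dinv by one. No further comment is needed.
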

In summary, this paper demonstrates that the commutativity relations for the modified Hall Littlewood operator $C_{[i,j]}$ operator are the same as those for the parking functions with composition $[i,j]$ when $i$ or $j$ is $1$. Future research is required to give the same commutativity relations when neither $i$ nor $j$ is 1. Experimental results suggest that such bijections should also respect the diagonal word; thus it is reasonable to believe that such proofs would give the full community relations (i.e. including for partitions of any number of parts).
The current result is enough to simplify the proof of several recently proved theorems on parking functions whose word is a particular type of shuffle. 
\begin{definition}
We say that a parking function $PF$ is a \textbf{shuffle} of $[a_1,\cdots, a_k]$ and $[b_1, \cdots, b_{n-k}]$ (or is in $S([a_1,\cdots, a_k], [b_1, \cdots, b_{n-k}])$) if for all $i$, $a_i$ occurs before $a_{i+1}$ and $b_i$ occurs before $b_{i+1}$ in the word of $PF$. 
\end{definition}
Recently the following two theorems were proved in \cite{EHshuffle} and \cite{GXZ} using $\langle,\rangle$ to denote the Hall scalar product:
\begin{theorem}
$$\langle\nabla C_c 1,e_i h_{n-i}\rangle=\sum_{\substack{PF\in S([1,\cdots, n-i], [n, \cdots, n-i+1])\\\operatorname{comp}(PF)=c}} t^{\text{area}(PF)} q^{\text{dinv}(PF)} $$
\end{theorem}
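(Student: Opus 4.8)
The plan is to match the two sides of the claimed identity by running the $C$-operator commutativity law in parallel with the bijection of Theorem~\ref{thm:bigone}, after first converting the Hall pairing into a purely combinatorial condition on $\operatorname{ides}$. So the first thing I would do is observe that $S([1,\ldots,n-i],[n,\ldots,n-i+1])$ is governed entirely by $\operatorname{ides}$: having $1$ before $2$ before $\cdots$ before $n-i$ in the word forces $j\notin\operatorname{ides}(PF)$ for all $j<n-i$; having $n$ before $n-1$ before $\cdots$ before $n-i+1$ forces $j\in\operatorname{ides}(PF)$ for all $n-i<j<n$; and the pair $n-i,\,n-i+1$ straddles the two runs and is unconstrained. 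Hence a parking function lies in this shuffle set exactly when $\operatorname{ides}(PF)=\{n-i+1,\ldots,n-1\}$ or $\operatorname{ides}(PF)=\{n-i,\ldots,n-1\}$. Combined with the standard fact that $\langle\sum_{PF}\operatorname{wt}(PF),\,e_ih_{n-i}\rangle$ extracts precisely the $(i,n-i)$-shuffle parking functions, this turns the theorem into the statement $L_c:=\langle\nabla C_c1,e_ih_{n-i}\rangle=R_c$, where $R_c$ is the sum of $t^{\operatorname{area}(PF)}q^{\operatorname{dinv}(PF)}$ over $PF\in\mathcal A_c$ with $\operatorname{ides}(PF)\in\{\{n-i+1,\ldots,n-1\},\{n-i,\ldots,n-1\}\}$; equivalently $R_c=\langle\sum_{PF\in\mathcal A_c}\operatorname{wt}(PF),\,e_ih_{n-i}\rangle$.

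\emph{The operator side.} Sandwiching the commutativity law $q(C_aC_b+C_{b-1}C_{a+1})=C_bC_a+C_{a+1}C_{b-1}$ (for $a+1\le b$) as $C_{c'}(\,\cdot\,)C_{c''}1$ and applying $\nabla$ followed by $\langle\,\cdot\,,e_ih_{n-i}\rangle$ yields, for every split,
\[
q\big(L_{[c',a,b,c'']}+L_{[c',b-1,a+1,c'']}\big)=L_{[c',b,a,c'']}+L_{[c',a+1,b-1,c'']}.
\]
Moreover $e_n=\sum_{c\models n}C_c1$, together with a base input taken from the literature (the single-part case, where $C_n1=(-1/q)^{n-1}h_n$ makes $L_{[n]}$ explicit, and the already-established hook cases $\langle\nabla e_n,s_{(n-i,1^i)}\rangle$, $\langle\nabla e_n,s_{(n-i+1,1^{i-1})}\rangle$ of the shuffle conjecture, whose sum is $\langle\nabla e_n,e_ih_{n-i}\rangle$), gives $\sum_cL_c=\langle\nabla e_n,e_ih_{n-i}\rangle=\sum_cR_c$.

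\emph{The parking-function side.} Here Theorem~\ref{thm:bigone} supplies a bijection between $\mathcal A_{[c',c_i,1,c'']}\cup\mathcal A_{[c',2,c_i-1,c'']}$ and $\mathcal A_{[c',1,c_i,c'']}\cup\mathcal A_{[c',c_i-1,2,c'']}$ that preserves area and $\operatorname{ides}$ and shifts $\operatorname{dinv}$ by exactly one. Since, by the first paragraph, membership in our shuffle set depends only on $\operatorname{ides}$, this bijection restricts to the shuffle parking functions on the two sides, and therefore delivers exactly the $a=1$, $b=c_i$ instance of the displayed recursion with every $L$ replaced by $R$.

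\emph{Finishing and the main obstacle.} It remains to deduce $L_c=R_c$ for all $c$ from the facts that $L$ and $R$ obey the same recursion and agree in aggregate. Since $\{C_\mu1\}_{\mu\vdash n}$ is a basis, the commutativity relations ``straighten'' $\langle\nabla C_c1,e_ih_{n-i}\rangle$ into a fixed $\mathbf{Z}[q^{\pm1}]$-linear combination of the partition-indexed values, and one wants the same straightening to reduce $R_c$ to the corresponding combination of the $R_\mu$. Theorem~\ref{thm:bigone} hands over precisely the relations with a part equal to $1$, and these are exactly the ones whose weight-preservation requires the most delicate combinatorics in \cite{EHshuffle} and \cite{GXZ}; the remaining commutativity relations (those moving a part $\ge2$ past another) and the partition base cases $L_\mu=R_\mu$ are imported from those works or checked directly, which is the sense in which the explicit bijection streamlines the argument. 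The crux — and where I expect the real work to lie — is this straightening step: verifying that the parking-function commutativity relations actually in hand collapse an arbitrary $\mathcal A_c$ onto the partition-indexed pieces, while keeping the $q$-powers consistent and correctly handling the degenerate boundary cases (e.g.\ $c_i=2$, where $[c',2,c_i-1,c'']$ acquires a fresh $1$ and the recursion partially degenerates). Steps one through three are essentially bookkeeping; it is the combinatorial connectivity of the straightening that carries the theorem.
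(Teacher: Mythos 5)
First, a point of orientation: the paper does not prove this statement at all. It is quoted from \cite{EHshuffle} and \cite{GXZ}, where it is established by exhibiting recursions satisfied by both sides plus a small family of base cases; the paper's own contribution (Theorem \ref{thm:bigone}) is only offered as a way to simplify the $c_1=1$ branch of those existing proofs. So there is no in-paper proof for your argument to coincide with, and your proposal has to stand on its own as a proof. Its preliminary reductions are fine: the characterization of $S([1,\ldots,n-i],[n,\ldots,n-i+1])$ by $\operatorname{ides}(PF)\in\{\{n-i+1,\ldots,n-1\},\{n-i,\ldots,n-1\}\}$ is correct, the quasisymmetric pairing fact is standard, and the aggregate identity $\sum_c L_c=\langle\nabla e_n,e_ih_{n-i}\rangle=\sum_c R_c$ is a legitimate import (Haglund's $q,t$-Schr\"oder result). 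Likewise, Theorem \ref{thm:bigone} does give the $a=1$ instances of the recursion on the $R$ side, since membership in the shuffle set depends only on ides.

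The genuine gap is exactly the step you flag as the crux, and it cannot be closed with the resources you cite. Straightening an arbitrary $\mathcal{A}_c$ (equivalently $C_c1$) onto partition-indexed terms requires the commutativity relations for \emph{all} pairs of adjacent parts, e.g.\ already $[2,3]$ versus $[3,2]$ needs a swap with both parts at least $2$. On the operator side these relations are known, but on the parking-function side the relations with both parts $\ge 2$ are precisely the open Conjecture \ref{conj:commute1}/\ref{conj:commute2}: the paper proves them only when a part equals $1$ and explicitly states that the general case requires future research. Your fallback of importing the missing $R$-side relations and the partition base cases $L_\mu=R_\mu$ from \cite{EHshuffle} and \cite{GXZ} does not rescue the argument: those papers do not establish composition-swapping identities among the $R_c$ at all (they prove the theorem by plethystic/combinatorial recursions of a different shape), and the partition cases they do prove are not a ``small base'' but essentially the full content of the theorem, so invoking them makes the argument circular. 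As written, your proof establishes only that $L$ and $R$ satisfy a common, strictly incomplete set of linear relations together with an aggregate equality, which does not determine $L_c=R_c$ for individual $c$; what it legitimately yields is the weaker (and intended) conclusion that the part-equal-to-$1$ cases of the theorem follow from the remaining cases, which is the simplification the paper actually claims.
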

\begin{theorem}
$$\langle\nabla C_c 1,h_i h_{n-i}\rangle=\sum_{\substack{PF\in S([1,\cdots, i], [i+1, \cdots, n])\\\operatorname{comp}(PF)=c}} t^{\text{area}(PF)} q^{\text{dinv}(PF)} $$
\end{theorem}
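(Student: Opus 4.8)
The plan is to peel off the symmetric-function machinery with a single quasisymmetric pairing identity, reducing the statement to a recursion on compositions that the paper's bijection is built to feed.

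First I would use the standard pairing of a Gessel quasisymmetric function against a product of two complete homogeneous symmetric functions, namely $\langle Q_S, h_i h_{n-i}\rangle = \chi(S\subseteq\{i\})$, where the lone element $i$ is the partial sum separating the two parts. The point is that a parking function lies in $S([1,\dots,i],[i+1,\dots,n])$ exactly when each of the blocks $1,\dots,i$ and $i+1,\dots,n$ reads left-to-right in increasing order in $\operatorname{word}(PF)$, which is precisely the condition $\operatorname{ides}(PF)\subseteq\{i\}$. Combining these, $\langle \operatorname{wt}(PF), h_i h_{n-i}\rangle = t^{\operatorname{area}(PF)} q^{\operatorname{dinv}(PF)}\,\chi(PF\in S([1,\dots,i],[i+1,\dots,n]))$, so that summing over $\mathcal A_c$ identifies the right-hand side of the theorem with $\langle \sum_{PF\in\mathcal A_c}\operatorname{wt}(PF),\, h_i h_{n-i}\rangle$. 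In other words the theorem is exactly the $h_i h_{n-i}$-projection of Conjecture \ref{conj:Cop}; the task is to prove this one projection unconditionally, since the full conjecture is open.

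Next I would prove the projected identity by matching two recursions on the composition $c$. Write $A_c=\langle\nabla C_c 1, h_i h_{n-i}\rangle$ and $B_c=\langle\sum_{PF\in\mathcal A_c}\operatorname{wt}(PF), h_i h_{n-i}\rangle$. Applying the $C$-operator commutativity law locally inside $c$, then $\nabla$, then pairing with $h_i h_{n-i}$, yields $q\,(A_{[c',a,b,c'']}+A_{[c',b-1,a+1,c'']}) = A_{[c',b,a,c'']}+A_{[c',a+1,b-1,c'']}$ for every $a\le b-1$. On the parking-function side I would use that the bijection $f$ of Theorem \ref{thm:bigone} respects $\operatorname{ides}$, and that the shuffle condition is exactly the $\operatorname{ides}$-condition $\operatorname{ides}\subseteq\{i\}$; hence $f$ carries the shuffle subfamily of its domain bijectively onto the shuffle subfamily of its range. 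Restricting the weight identity of Theorem \ref{thm:bigone} to these subfamilies and reading it through the pairing gives the identical four-term relation for $B_c$ in the case one of the two exchanged parts equals $1$. Because $A$ and $B$ obey relations of the same shape, agreement on a base case propagates along these moves; this bijective production of the $B$-recursion is precisely the simplification the paper contributes.

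The main obstacle is the generality of the combinatorial recursion. The symmetric-function relation for $A_c$ holds for all $a\le b-1$ since the commutativity law is a theorem about the $C$-operators, but Theorem \ref{thm:bigone} supplies the matching relation for $B_c$ only when one exchanged part is $1$, and such moves alone do not connect an arbitrary composition to a base case (for instance $[1,1,\dots,1]$ admits no such move). For the base I would take single-part and sorted compositions, where $A_c$ is computed directly from the explicit evaluation of $C_c 1$ and $B_c$ from a direct shuffle count, and then verify these agree. The delicate heart of the argument is to cover the remaining exchanges, where both parts exceed $1$: at the combinatorial level this is the still-open general commutativity bijection of Conjecture \ref{conj:commute2}, so the complete proof must either establish the $B_c$-recursion for those moves directly---exploiting that the $\operatorname{ides}\subseteq\{i\}$ restriction collapses much of the parking-function structure---or borrow that content from the symmetric-function arguments of \cite{EHshuffle} and \cite{GXZ}, with the present bijection handling the $b=1$ portion unconditionally.
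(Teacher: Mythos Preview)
The paper does not prove this theorem; it merely cites it from \cite{GXZ} and describes the shape of that external proof. According to the paper, the proof in \cite{GXZ} matches recursions on both sides that peel off or reduce the first part $c_1$ of the composition, with a separate (and harder) combinatorial argument required when $c_1=1$ versus $c_1>1$. The only contribution of the present paper to that proof is a simplification of the $c_1=1$ branch: once the theorem is known for compositions with $c_1>1$, the four-term commutativity identity together with Theorem~\ref{thm:bigone} lets one write $B_{[1,c_2,\dots]}$ as a combination of $B$ at compositions with first part larger than $1$, and likewise for $A$, so the $c_1=1$ case follows without a bespoke argument.

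Your proposal takes a genuinely different route: you try to use the commutativity four-term relations themselves as the inductive engine, rather than the first-part recursion of \cite{GXZ}. Your reduction in the first paragraph (the pairing $\langle Q_S,h_ih_{n-i}\rangle=\chi(S\subseteq\{i\})$ and the equivalence of the shuffle condition with $\operatorname{ides}\subseteq\{i\}$) is correct and is exactly the right way to see both sides as the $h_ih_{n-i}$-projection of Conjecture~\ref{conj:Cop}. But the induction you set up does not close: the four-term relations do not reduce an arbitrary composition to the sorted or single-part base cases you name (they preserve length and only shuffle sizes among adjacent parts, with a coupled $\pm1$), and on the $B$-side you only have the relation when one exchanged part equals $1$. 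You acknowledge both gaps and propose to fill them by borrowing from \cite{EHshuffle} and \cite{GXZ}, but that is precisely where the theorem is proved, so the sketch becomes circular. In short, your opening reduction is sound and matches the spirit of the subject, but the inductive scheme is not the one used, and a self-contained proof along your lines would require new combinatorics beyond what this paper supplies.
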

Proof of both theorems demonstrated the existence of recursions satisfied by both sides of the equations along with the equality of a small family of base cases. In both instances, the relevant recursion was specifically different when $c_1=1$ (where $c_1$ is the first part of $c$) and required a separate combinatorial proof from when $c_1>1$. For the latter theorem, the proof when $c_1=1$ is significantly harder than when $c_1>1$. This paper allows us to eliminate consideration of partitions with first part size $1$, except when $p=\{1,\cdots,1\}$, which was already a trivial base case in the previous proofs.
\section{Funding}
This work was supported by the National Science Foundation.
\bibliographystyle{plain}
\bibliography{k1bib}

\end{document}